\documentclass[11pt]{article}

\usepackage{amsmath,amssymb,amsthm,mathrsfs}
\usepackage{xcolor}
\usepackage{fullpage}
\usepackage{hyperref}
\hypersetup{
  hidelinks,
  pdftitle={A Logarithmic Fluctuation Hierarchy for Sequential Interacting Diffusions},
  pdfauthor={Zhenfu Wang and Xianliang Zhao},
  pdfkeywords={sequential interacting diffusions, non-exchangeable particle systems, Gaussian fluctuations, central limit theorem, logarithmic fluctuation hierarchy}
}

\newtheorem{theorem}{Theorem}[section]
\newtheorem{corollary}[theorem]{Corollary}
\newtheorem{proposition}[theorem]{Proposition}
\newtheorem{lemma}[theorem]{Lemma}
\newtheorem{remark}[theorem]{Remark}
\newtheorem*{assumptionA}{Assumption A}
\numberwithin{equation}{section}

\newcommand{\R}{\mathbb R}
\newcommand{\Law}{\mathcal L}
\newcommand{\dd}{\,\mathrm d}
\newcommand{\LawConv}{\Rightarrow}
\title{A Logarithmic Fluctuation Hierarchy for Sequential Interacting Diffusions}

\author{Zhenfu Wang and Xianliang Zhao}

\date{}

\begin{document}
\maketitle

\begin{abstract}
We study Gaussian fluctuations for a lower-triangular system of
interacting diffusions in which particle \(i\) interacts only with its
predecessors. Although the empirical measure of this system converges to the
same McKean--Vlasov limit as in the corresponding exchangeable mean-field
system, the sequential structure remains visible at the \(N^{-1/2}\)
fluctuation scale.
We introduce the logarithmically weighted fluctuation fields
\[
Y_t^{N,n}
=
\frac1{\sqrt N}
\sum_{i=1}^N
\frac{\bigl(\log(N/i)\bigr)^n}{n!}
(\delta_{X_t^i}-\bar\rho_t),
\qquad n\ge 0,
\]
and prove joint convergence of the entire family in a countable product of
weighted negative Sobolev path spaces to the unique probabilistically strong
solution of a linear hierarchy in which \(Y^n\) couples to \(Y^{n+1}\). In
particular, the limit of the empirical fluctuation field
\(\sqrt N(\mu_t^N-\bar\rho_t)=Y_t^{N,0}\) is not governed by the closed
fluctuation SPDE arising in the classical exchangeable case. The proof
combines conditional-measure replacement, deterministic estimates for the
logarithmic weights, a martingale argument, and a weighted Volterra estimate.
\end{abstract}

\medskip
\noindent\textbf{Keywords.}
Sequential interacting diffusions, non-exchangeable particle systems,
Gaussian fluctuations, central limit theorem, logarithmic fluctuation
hierarchy.

\section{Introduction and main result}

Interacting particle systems approximate McKean--Vlasov diffusions and their
nonlinear Fokker--Planck equations at two basic scales. At first order, a law
of large numbers gives convergence of the empirical measure to the
deterministic solution of the limiting nonlinear equation. This is the
classical mean-field limit picture; see, for instance,
McKean~\cite{mckeanpropagation}, Sznitman~\cite{sznitman1991topics}, and the
surveys and quantitative works
\cite{fournier2014propagation,jabin2014review,jabin2018quantitative,lacker2021hierarchies,serfaty2020mean}.
At second order, the central limit correction
\[
  \mu^N \approx \bar\rho + N^{-1/2}\eta,
\]
describes the statistical error around that limit. In the classical
exchangeable model, where each particle interacts symmetrically with the
empirical measure of the whole population, this correction is governed by a
closed linear equation for the single averaged fluctuation field.
This theory is usually formulated through linear martingale
problems or stochastic evolution equations; see, for instance,
\cite{mckean1975fluctuations,meleard1996asymptotic,fernandez1997hilbertian,wang2023gaussian}.

The question addressed here is whether this classical fluctuation equation is
determined by the McKean--Vlasov law of large numbers. We show that it is not:
a sequential system and an exchangeable system may have the same deterministic
limit but different Gaussian fluctuations.

We study how the fluctuation picture changes when exchangeability is replaced
by a sequential order. The model is the lower-triangular particle system
\begin{equation}\label{eq:seq-sde}
\begin{aligned}
  \dd X_t^1
  &= b(t,X_t^1)\,\dd t+\dd B_t^1,\\
  \dd X_t^i
  &=
  \Big(b(t,X_t^i)+(K*\mu_t^{i-1})(X_t^i)\Big)\,\dd t
  +\dd B_t^i,\qquad i\ge2,
\end{aligned}
\end{equation}
where
\[
  \mu_t^{i-1}:=\frac1{i-1}\sum_{j<i}\delta_{X_t^j}.
\]
The family \((B^i)_{i\ge1}\) consists of i.i.d.\ standard \(d\)-dimensional
Brownian motions. Particle \(i\) interacts only with its predecessors. We write
\[
  \mu_t^N:=\frac1N\sum_{i=1}^N\delta_{X_t^i}
\]
for the global empirical measure. The lower-triangular construction is
naturally online: after \((X^1,\ldots,X^N)\) has been generated, adding
particle \(X^{N+1}\) does not change any of the preceding trajectories.
The system is directed and non-exchangeable, but it approximates the same
McKean--Vlasov diffusion as the exchangeable model:
\begin{equation}\label{eq:limit-sde}
  \dd\bar X_t
  =
  \Big(b(t,\bar X_t)+(K*\bar\rho_t)(\bar X_t)\Big)\,\dd t
  +\dd B_t,
  \qquad \bar\rho_t=\Law(\bar X_t).
\end{equation}

Sequential particle approximations and related non-exchangeable diffusions
were studied by Du--Jiang--Li~\cite{du2023sequential} and
Lacker--Yeung--Zhou~\cite{lacker2024quantitative}. Sharp quantitative
convergence of the lower-triangular system \eqref{eq:seq-sde} to the
McKean--Vlasov limit \eqref{eq:limit-sde} was proved in the authors' companion
work \cite{WangZhaoSequentialEntropy}, using incremental relative entropy and
conditional measure replacement. Thus, at the level of the law of large
numbers, the sequential structure is invisible: the empirical measure still
converges to \(\bar\rho_t\).

At the fluctuation scale, however, this structure becomes visible when the
predecessor sums are reversed. The resulting coefficient of particle \(j\) is
its cumulative influence weight on all successors, namely the sum of the
\(j\)-th column of the interaction matrix. In the balanced non-exchangeable
regime considered by Shkolnikov and
Yeung~\cite{shkolnikovyeung2026universal}, these cumulative weights are
asymptotically constant and the classical mean-field fluctuation equation is
recovered. In the sequential system they instead have the nonconstant profile
\(\log(1/u)\). This is the source of the nonclassical fluctuation limit; a
detailed comparison is given after
Theorem~\ref{thm:log-hierarchy}.

Our main result identifies the correct functional limit in this unbalanced
regime. The averaged fluctuation field is not closed: it couples to a first
logarithmically weighted field, which couples to a second one, and so on. We
prove convergence of the entire countable sequence in the product topology,
identify the covariances of its initial fields and martingale noises, and
prove strong well-posedness of the limiting linear hierarchy. The logarithmic
weights are unbounded near the earliest particle labels, and the hierarchy
never closes at a finite level; controlling these two features is the main
analytical task.

\begin{assumptionA}\phantomsection\label{ass:A}
Let \(T>0\) and \(d\ge1\).
The following assumptions are used throughout the paper.
\begin{enumerate}
\item \(b\in C_b^\infty([0,T]\times\R^d;\R^d)\).
\item The interaction is convolutional: \((K*\nu)(x)=\int_{\R^d}K(x-y)\nu(\dd y)\).
\item \(K\in C_b^\infty(\R^d;\R^d)\cap L^1(\R^d;\R^d)\).
\item The initial variables \(X_0^i\) are i.i.d.\ with common law
\(\bar\rho_0\in\mathcal P(\R^d)\) and are independent of the Brownian motions
\((B^i)_{i\ge1}\).
\end{enumerate}
\end{assumptionA}

\begin{remark}\label{rem:strong-assumptions}
Assumption~A is imposed for convenience rather than
optimality. It allows us to apply directly the quantitative results of
\cite{WangZhaoSequentialEntropy} and to carry out the fluctuation analysis
without developing a separate regularity theory for the McKean--Vlasov limit.
Some regularity is still needed to make the linearized limiting equation
well-defined and well-posed. The coefficient assumptions could be relaxed
substantially; in particular, rough or singular interactions could be treated
by requiring sufficient regularity of the McKean--Vlasov limit to compensate
for the loss of regularity of the kernel. This trade-off is used in
\cite{wang2023gaussian}. We do not pursue such extensions here.
\end{remark}

\theoremstyle{definition}
\newtheorem{definition}[theorem]{Definition}
\theoremstyle{plain}

\paragraph{Derivation of the logarithmic hierarchy.}
Before stating the main theorem, we derive at the
finite-particle level the mechanism that determines the limiting system and
introduces the operators appearing in its definition.
Sections~\ref{sec:identification-inputs} and
\ref{sec:limit-identification} justify the limiting steps.
We begin with the averaged fluctuation measure process
\[
  Y_t^{N,0}
  :=
  \sqrt N(\mu_t^N-\bar\rho_t)
  =
  \frac1{\sqrt N}\sum_{i=1}^N(\delta_{X_t^i}-\bar\rho_t).
\]
Let \(L_t\) be the backward transport-diffusion operator along the
McKean--Vlasov limit:
\[
  L_t\varphi
  =
  \frac12\Delta\varphi
  +
  \big(b(t,\cdot)+K*\bar\rho_t\big)\cdot\nabla\varphi.
\]
For every \(\varphi\in C_c^\infty(\R^d)\), It\^o's formula and the decomposition
\(\delta_{X_t^i}=\bar\rho_t+(\delta_{X_t^i}-\bar\rho_t)\) give
\[
  \begin{aligned}
  \dd\langle Y_t^{N,0},\varphi\rangle
  ={}&
  \langle Y_t^{N,0},L_t\varphi\rangle\,\dd t
  +I_{t,0}^{N,0}(\varphi)\,\dd t
  +\mathcal R_t^{N,0}(\varphi)\,\dd t
  \\
  &-
  \frac1{\sqrt N}
  \nabla\varphi(X_t^1)\cdot(K*\bar\rho_t)(X_t^1)\,\dd t
  +\dd M_t^{N,0}(\varphi),
  \end{aligned}
\]
where \(I_{t,0}^{N,0}\) is the part linear in the predecessor
fluctuation and \(\mathcal R_t^{N,0}\) is the quadratic remainder:
\[
  \begin{aligned}
  I_{t,0}^{N,0}(\varphi)
  &:={}
  \frac1{\sqrt N}
  \sum_{i=2}^N
  \left\langle
    \bar\rho_t,
    (K*(\mu_t^{i-1}-\bar\rho_t))\cdot\nabla\varphi
  \right\rangle,
  \\
  \mathcal R_t^{N,0}(\varphi)
  &:={}
  \frac1{\sqrt N}
  \sum_{i=2}^N
  \left\langle
    \delta_{X_t^i}-\bar\rho_t,
    (K*(\mu_t^{i-1}-\bar\rho_t))\cdot\nabla\varphi
  \right\rangle,
  \end{aligned}
\]
and \(M^{N,0}(\varphi)\) denotes the continuous martingale
arising from the Brownian terms.
The particle-\(1\) term and the time integral of
\(\mathcal R^{N,0}\) vanish as \(N\to\infty\), the latter by
Section~\ref{sec:conditional-replacement}; hence the sequential effect enters
through \(I_{t,0}^{N,0}\).

Indeed, using
\[
  \mu_t^{i-1}-\bar\rho_t
  =
  \frac1{i-1}
  \sum_{j<i}(\delta_{X_t^j}-\bar\rho_t)
\]
and reversing the order of summation, we obtain
\begin{equation}\label{eq:intro-I00-reversed-sum}
  \begin{aligned}
  I_{t,0}^{N,0}(\varphi)
  ={}&
  \frac1{\sqrt N}
  \sum_{j=1}^{N-1}
  \left(
    \sum_{i=j+1}^N\frac1{i-1}
  \right)
  \\
  &\qquad\times
  \left\langle
    \bar\rho_t,
    (K*(\delta_{X_t^j}-\bar\rho_t))\cdot\nabla\varphi
  \right\rangle.
  \end{aligned}
\end{equation}
Writing \(h_0\equiv1\), particle \(j\) receives the weight
\[
  (\mathsf T_N h_0)(j/N)
  :=
  \sum_{i=j+1}^N\frac1{i-1},
  \qquad 1\le j\le N-1,
  \qquad
  (\mathsf T_N h_0)(1):=0.
\]
When \(j/N\to u\in(0,1]\),
\[
  (\mathsf T_N h_0)(j/N)
  \longrightarrow
  \int_u^1\frac{\dd v}{v}
  =
  \log(1/u).
\]
The first logarithmic weight is therefore
\[
  h_1(u):=\log(1/u)
\]
and the corresponding fluctuation measure process
\[
  Y_t^{N,1}
  :=
  \frac1{\sqrt N}
  \sum_{i=1}^N
  h_1(i/N)(\delta_{X_t^i}-\bar\rho_t).
\]
The limiting equation for the averaged fluctuation then has the form
\[
  \begin{aligned}
  \langle Y_t^0,\varphi\rangle
  ={}&
  \langle Y_0^0,\varphi\rangle
  +
  \int_0^t\langle Y_s^0,L_s\varphi\rangle\,\dd s
  \\
  &+
  \int_0^t
  \left\langle
    \bar\rho_s,
    (K*Y_s^1)\cdot\nabla\varphi
  \right\rangle\,\dd s
  +M_t^0(\varphi).
  \end{aligned}
\]
Here \(M^0\) is the limiting martingale corresponding to \(M^{N,0}\). A
classical closed fluctuation equation would contain \(Y^0\) in the last drift
term; the sequential equation contains \(Y^1\). Thus the equation for
\(Y^0\) is already not closed.

Applying the same calculation to \(Y^{N,1}\) replaces \(h_1\) by
\(\mathsf T_N h_1\), and \(\mathsf T_N h_1\to \mathsf T h_1=h_2\), where
\(h_2(u)=\bigl(\log(1/u)\bigr)^2/2\). Hence the limiting equation for \(Y^1\) depends on
\(Y^2\).

For general \(n\), predecessor summation produces the discrete weight
\[
  (\mathsf T_N h_n)(j/N)
  :=
  \sum_{i=j+1}^N\frac{h_n(i/N)}{i-1},
  \qquad 1\le j\le N-1,
  \qquad
  (\mathsf T_N h_n)(1):=0.
\]
The limiting operator is
\[
  (\mathsf T f)(u)
  :=
  \int_u^1\frac{f(v)}{v}\,\dd v.
\]
Starting from \(h_0\equiv1\) and defining \(h_{n+1}=\mathsf T h_n\), we obtain
\[
  h_n(u)=\frac{\bigl(\log(1/u)\bigr)^n}{n!},
  \qquad n\ge0.
\]
We accordingly set
\[
  Y_t^{N,n}
  :=
  \frac1{\sqrt N}
  \sum_{i=1}^N
  h_n(i/N)(\delta_{X_t^i}-\bar\rho_t).
\]
Lemma~\ref{lem:TN-error-summability} proves that
\(\mathsf T_N h_n\to h_{n+1}\). Therefore the equation for \(Y^n\) depends on
\(Y^{n+1}\) for every \(n\ge0\): predecessor summation moves one step forward
in the logarithmic index, and no finite collection of levels is closed.

We now define the limiting logarithmic hierarchy. Fix \(\beta>d/2+4\),
\(m>d/2+1\), and
\(\beta_\ast\) such that
\[
\frac d2+2<\beta_\ast<\beta-2.
\]
Let \(H^{-\beta,-m}\) be the weighted negative Sobolev space defined in
Subsection~\ref{subsec:notation}, and set
\(E:=C([0,T];H^{-\beta,-m})\).

\begin{definition}[Probabilistically weak and strong solutions]
\label{def:log-hierarchy-solution}
We call
\[
(Y,\mathcal M)
=
\bigl((Y^n)_{n\ge0},(\mathcal M^n)_{n\ge0}\bigr)
\]
a probabilistically weak solution of the limiting logarithmic hierarchy if
there exists a stochastic basis
\[
(\Omega,\mathcal F,(\mathcal F_t)_{t\in[0,T]},\mathbb P)
\]
such that, on this basis, \(Y\) and \(\mathcal M\) have the following
properties.
\begin{enumerate}
\item[(i)] For every \(n\ge0\), \(Y^n\) is a continuous
\((\mathcal F_t)\)-adapted process with values in \(H^{-\beta,-m}\) and has
weakly continuous paths in \(H^{-\beta_\ast}\). Moreover, for some
\(q\in(0,1/4)\),
\begin{equation}\label{eq:solution-class-summability}
\sum_{n=0}^\infty
q^n
\mathbb E\sup_{t\le T}\|Y_t^n\|_{H^{-\beta_\ast}}^2
<\infty.
\end{equation}

\item[(ii)] For every \(n\ge0\), \(\mathcal M^n\) is a continuous
\((\mathcal F_t)\)-adapted process with values in \(H^{-\beta,-m}\), and
\(\mathcal M\), viewed as an \(E^{\mathbb N_0}\)-valued random element, is
centered Gaussian. For every \(n\ge0\) and
\(\varphi\in C_c^\infty(\R^d)\),
\[
M_t^n(\varphi):=\langle\mathcal M_t^n,\varphi\rangle
\]
is a continuous \((\mathcal F_t)\)-martingale, and
\begin{equation}\label{eq:limit-martingale-covariance}
\langle M^n(\varphi),M^k(\psi)\rangle_t
=
\frac{(n+k)!}{n!\,k!}
\int_0^t
\langle
\bar\rho_s,
\nabla\varphi\cdot\nabla\psi
\rangle\,\dd s
\end{equation}
for all \(n,k\ge0\), \(t\in[0,T]\), and
\(\varphi,\psi\in C_c^\infty(\R^d)\).

\item[(iii)] For every \(n\ge0\) and
\(\varphi\in C_c^\infty(\R^d)\), the following identity holds
\(\mathbb P\)-almost surely for all \(t\in[0,T]\):
\begin{align}
\langle Y_t^n,\varphi\rangle
={}&
\langle Y_0^n,\varphi\rangle
+
\int_0^t
\langle Y_s^n,L_s\varphi\rangle\,\dd s\notag\\
&+
\int_0^t
\left\langle
\bar\rho_s,
(K*Y_s^{n+1})\cdot\nabla\varphi
\right\rangle\,\dd s
+
\langle\mathcal M_t^n,\varphi\rangle.
\label{eq:limit-hierarchy}
\end{align}
\end{enumerate}

We call \(Y\) a probabilistically strong solution if there exist a stochastic
basis and a process \(\mathcal M\) on it such that \((Y,\mathcal M)\) is a
probabilistically weak solution and \(Y\) is adapted to
the normal filtration generated by
\((Y_0^n)_{n\ge0}\) and \(\mathcal M\).
\end{definition}

\begin{theorem}\label{thm:log-hierarchy}
Under \hyperref[ass:A]{Assumption~A}, as \(N\to\infty\),
\begin{equation}\label{eq:main-product-convergence}
(Y^{N,n})_{n\ge0}
\LawConv
(Y^n)_{n\ge0}
\end{equation}
in \(E^{\mathbb N_0}\), equipped with the countable product topology.
The initial hierarchy \((Y_0^n)_{n\ge0}\) has a centered Gaussian law with
covariance
\begin{equation}\label{eq:limit-initial-covariance}
\mathbb E\!\left[
\langle Y_0^n,\varphi\rangle
\langle Y_0^k,\psi\rangle
\right]
=
\frac{(n+k)!}{n!\,k!}
\left[
\langle\bar\rho_0,\varphi\psi\rangle
-
\langle\bar\rho_0,\varphi\rangle
\langle\bar\rho_0,\psi\rangle
\right]
\end{equation}
for all \(n,k\ge0\) and \(\varphi,\psi\in C_c^\infty(\R^d)\).
For this initial hierarchy, the limiting family \((Y^n)_{n\ge0}\) is the
unique probabilistically strong solution in the sense of
Definition~\ref{def:log-hierarchy-solution}.
\end{theorem}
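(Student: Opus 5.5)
The proof has three parts: tightness of each coordinate, identification of subsequential limits as weak solutions of the hierarchy with the stated Gaussian data, and pathwise uniqueness in the class \eqref{eq:solution-class-summability}. Convergence of the whole sequence then follows by a Yamada--Watanabe type argument.

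\emph{Tightness and a priori bounds.} Since \(E^{\mathbb N_0}\) carries the product topology, tightness of the family reduces to tightness of each \(Y^{N,n}\) in \(E\). For fixed \(n\) I will use It\^o's formula in the Hilbert space \(H^{-\beta_\ast}\) together with the decomposition obtained in the derivation above. The drift splits into four pieces:
\begin{itemize}
\item the transport part \(\langle Y^{N,n},L_t\varphi\rangle\);
\item the reversed-sum part, which equals \(\langle\bar\rho_t,(K*\tilde Y^{N,n+1})\cdot\nabla\varphi\rangle\), where \(\tilde Y^{N,n+1}\) uses the discrete weights \(\mathsf T_N h_n\);
\item a quadratic remainder \(\mathcal R^{N,n}\);
\item boundary terms.
\end{itemize}
The remainder is controlled by conditional-measure replacement in the spirit of \cite{WangZhaoSequentialEntropy}. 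Conditionally on the first \(i-1\) particles, \(X^i_t\) has law close to \(\bar\rho_t\) in relative entropy, and the incremental entropy bounds are summable. This makes \(\mathbb E\int_0^T|\mathcal R^{N,n}_t|\,\dd t=O(N^{-1/2}\,\mathrm{polylog}N)\), since \(h_n(i/N)\) grows only like \((\log N)^n\) near \(i=1\).

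The key deterministic input is Lemma~\ref{lem:TN-error-summability}, which gives \(\mathsf T_N h_n\to h_{n+1}\) in a weighted \(\ell^2\) sense with errors summable against \(q^n\). It lets me replace \(\tilde Y^{N,n+1}\) by \(Y^{N,n+1}\) up to a vanishing error. The resulting system of inequalities is Volterra-type:
\[
\mathbb E\sup_{s\le t}\|Y^{N,n}_s\|^2\le C_n+C\int_0^t\mathbb E\|Y^{N,n+1}_s\|^2\,\dd s .
\]
To close it I will use the explicit second moments. Indeed \(\frac1N\sum_i h_n(i/N)h_k(i/N)\to\int_0^1 h_nh_k=\frac{(n+k)!}{n!k!}\), so the diagonal moment is \(\binom{2n}{n}\le 4^n\). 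Iterating the inequality \(J\) times gives a factor \((Ct)^J/J!\), and summing against \(q^n\) with \(q<1/4\) yields a uniform bound on \(\sum_n q^n\mathbb E\sup_t\|Y^{N,n}_t\|^2_{H^{-\beta_\ast}}\). Time regularity follows from the same estimates, and the compact embedding \(H^{-\beta_\ast}\hookrightarrow H^{-\beta,-m}\) then gives tightness via Aldous' criterion or Kolmogorov--Chentsov.

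\emph{Identification.} The initial fields are weighted sums of i.i.d.\ centered variables. A Lindeberg CLT with the covariance \(\frac1N\sum_i h_nh_k(i/N)\to\frac{(n+k)!}{n!k!}\) gives \eqref{eq:limit-initial-covariance}, and Lindeberg's condition holds because \(\max_i h_n(i/N)^2/N\to0\). The martingale brackets are
\[
\langle M^{N,n}(\varphi),M^{N,k}(\psi)\rangle_t=\frac1N\sum_i h_nh_k(i/N)\int_0^t\nabla\varphi\cdot\nabla\psi(X^i_s)\,\dd s .
\]
These converge in probability to \eqref{eq:limit-martingale-covariance}; for this I again use the LLN with conditional replacement, now with weights. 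The jumps are zero, so the martingale CLT gives a joint Gaussian limit. Passing to the limit in the finite-\(N\) identities along a Skorokhod subsequence, using the remainder estimates above, shows that every limit point is a weak solution. Fatou's lemma transfers the bound \eqref{eq:solution-class-summability}.

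\emph{Uniqueness, the main obstacle.} Take two solutions on the same basis with the same \(Y_0\) and \(\mathcal M\). Their difference \(Z^n\) solves the noiseless hierarchy. Since \(L_s\) generates a well-posed backward evolution on \(H^{-\beta_\ast}\), I will use the mild (dual) formulation. The coupling term \(\varphi\mapsto\langle\bar\rho_s,(K*Z^{n+1})\cdot\nabla\varphi\rangle\) is bounded in \(H^{-\beta_\ast}\) by \(C\|Z^{n+1}\|_{H^{-\beta_\ast}}\) thanks to the smoothness of \(K\) and \(\bar\rho\). This gives
\[
f_n(t):=\mathbb E\sup_{s\le t}\|Z^n_s\|^2\le C\int_0^t f_{n+1}(s)\,\dd s .
\]
The hierarchy never closes, so iterating \(J\) times yields \(f_n(t)\le (Ct)^J/J!\,\sup_s f_{n+J}(s)\). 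The solution class only gives \(f_{n+J}\le Aq^{-(n+J)}\), hence \(f_n(t)\le Aq^{-n}(Ct/q)^J/J!\to0\) as \(J\to\infty\). This weighted Volterra estimate gives pathwise uniqueness. Existence of a weak solution, from the identification step, together with pathwise uniqueness then gives, by Yamada--Watanabe, a strong solution and uniqueness in law. Hence the full sequence converges. The delicate point is to verify the boundedness of the coupling operator, and that the bound \(C\) is uniform in \(n\), in the \(H^{-\beta_\ast}\) norm (this is where \(\beta_\ast<\beta-2\) is used). The same estimate is needed uniformly in \(N\) in the tightness step.
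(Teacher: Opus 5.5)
\textbf{Verdict: gap in the tightness step.} Your overall architecture matches the paper's: tightness, identification through the reversed sums, conditional replacement, Lindeberg and martingale CLTs, then pathwise uniqueness plus Yamada--Watanabe. Your identification step is essentially the paper's. The problem is how you obtain the a priori bound.

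\textbf{Why the proposed a priori bound does not go through.}

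\emph{The remainder.} In the It\^o identity for $\|Y^{N,n}_t\|^2_{H^{-\beta_\ast}}$, every drift term is paired with $\nabla\Phi^{N,n}_s$, where $\Phi^{N,n}_s=G_{\beta_\ast}*Y^{N,n}_s$. This is a random function of all particles, including $X^i_s$ and its successors. Conditional replacement (Proposition~\ref{prop:weighted-conditional-measure-replacement}) is a statement for a fixed deterministic $\varphi$. It rests on two facts: orthogonality in $i$ of $\langle\delta_{X^i_s}-\nu^i_s,f\rangle$, and the entropy variational bound for $\langle\nu^i_s-\bar\rho_s,f\rangle$. Both require $f$ to be $\mathscr G^{i-1}_s$-measurable, and this fails for $f=F_{\Phi^{N,n}_s}[\mu^{i-1}_s-\bar\rho_s]$. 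So your $O(N^{-1/2}\mathrm{polylog}\,N)$ bound for the remainder inside the energy estimate would need a separate argument uniform over test functions, which you do not give.

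\emph{The weight replacement.} Replacing $Y^N(\mathsf T_Nh_n)$ by $Y^{N,n+1}$ inside the energy estimate requires a bound on $\mathbb E\int_0^T\|Y^N_s(\mathsf T_Nh_n-h_{n+1})\|^2_{H^{-\beta_\ast}}\,\dd s$. That is an a priori estimate for yet another weighted field. Lemma~\ref{lem:TN-error-summability} is a fixed-$n$ statement about deterministic weights only. The paper uses it only after testing against a fixed bounded $\Gamma_s$ (Lemma~\ref{lem:TN-error-weighted-lln}).

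\emph{A smaller point.} The entropy bounds $R_i(T)\le C_T/(i-1)$ are not summable. Smallness comes from the prefactors in Lemma~\ref{lem:polylog-summability}.

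Consequently, the Volterra system in $n$ on which your tightness and the bound \eqref{eq:solution-class-summability} rest is not established as written.

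\textbf{What is missing: tightness needs no hierarchy.} Keep the whole interaction error $\Delta^i_s$ as a forcing term. Since $\|\nabla\Phi^{N,n}_s\|_\infty\le C\|Y^{N,n}_s\|_{H^{-\beta_\ast+1}}$, you have
\[
\Bigl|\frac2{\sqrt N}\sum_{i=1}^N h_n(i/N)\nabla\Phi^{N,n}_s(X^i_s)\cdot\Delta^i_s\Bigr|
\le \frac14\|Y^{N,n}_s\|^2_{H^{-\beta_\ast+1}}
+C\Bigl(\frac1{\sqrt N}\sum_{i=1}^N|h_n(i/N)|\,|\Delta^i_s|\Bigr)^2 .
\]
The first term is absorbed by the dissipation. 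For the second, use Minkowski, $\mathbb E\int_0^T|\Delta^i_t|^2\,\dd t\le C_T/(i-1)$ (Lemma~\ref{lem:paper1-estimates}(i)), and monotonicity of $h_n$. This bounds it by $C\mathcal W_n\le C4^n$, uniformly in $N$ and separately for each level (Lemma~\ref{lem:weighted-J}, Proposition~\ref{prop:weighted-apriori}). The split into leading term plus remainder is needed only in the identification step, where test functions are deterministic.

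\textbf{Uniqueness.} Your uniqueness argument is correct and a legitimate variant of the paper's. The coupling functional is bounded on $H^{-\beta_\ast}$ simply because $\|K*z\|_\infty\le\|K\|_{H^{\beta_\ast}}\|z\|_{H^{-\beta_\ast}}$ and $H^{\beta_\ast}\hookrightarrow C^1_b$. No regularity of $\bar\rho$ is involved, and neither is $\beta_\ast<\beta-2$; that condition serves time equicontinuity. Iterating $J$ levels against $f_{n+J}\le Aq^{-(n+J)}$ does the same job as the paper's weighted norm $\mathcal Z(t)$ combined with the singular Volterra--Gronwall lemma.
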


\paragraph{Comparison with the universal non-exchangeable CLT.}
Recently, Shkolnikov and Yeung
\cite[Theorem~2.4]{shkolnikovyeung2026universal} proved a universal central
limit theorem for a class of non-exchangeable interacting diffusions with
deterministic interaction weights. The empirical measures of these systems
have the same McKean--Vlasov limit as the corresponding exchangeable
mean-field system. In this regime, the
global fluctuation field
\(\sqrt N\bigl(\mu_t^N-\bar\rho_t\bigr)\) converges, in particular, in
\(C([0,T];H^{-\beta})\) to the same Gaussian fluctuation process as in the
classical exchangeable mean-field case, denoted here by \(\eta\):
\[
  \partial_t \eta
  =
  \frac12\Delta \eta
  -\nabla\cdot\bigl(\bar\rho_t(K*\eta)\bigr)
  -\nabla\cdot\bigl(\eta(K*\bar\rho_t)\bigr)
  -\nabla\cdot\bigl(b(t,\cdot)\eta\bigr)
  -\nabla\cdot\bigl(\sqrt{\bar\rho_t}\,\xi\bigr),
  \qquad \eta|_{t=0}=\eta_0,
\]
where \(\xi\) is an \(\R^d\)-valued space--time white noise.

The sequential system considered here has the same McKean--Vlasov limit but a
different fluctuation limit. To compare the two regimes, write \(a_{ij}^N\)
for the weight with which particle \(j\) enters the interaction drift of
particle \(i\), set \(A^N=(a_{ij}^N)\), and define the \(j\)-th column sum
\[
  c_j^N:=\sum_{i=1}^N a_{ij}^N.
\]
Thus \(c_j^N\) is the total weight with which particle \(j\) enters the
interaction drifts of all particles.
The reversed-sum identity \eqref{eq:intro-I00-reversed-sum} for
\(I_{t,0}^{N,0}\) is the special case corresponding to the sequential
weights. For the weight matrix \(A^N\), the same calculation gives the leading
interaction term in the averaged fluctuation equation:
\[
  \frac1{\sqrt N}
  \sum_{j=1}^N c_j^N
  \left\langle
    \bar\rho_t,
    \bigl(K*(\delta_{X_t^j}-\bar\rho_t)\bigr)\cdot\nabla\varphi
  \right\rangle.
\]
Hence \(c_j^N\) is precisely the coefficient with which the fluctuation of
particle \(j\) enters the averaged interaction term.

For systems covered by
\cite[Theorem~2.4]{shkolnikovyeung2026universal}, the row-normalization and
column-balance requirements in
\cite[Assumption~2.1(i)]{shkolnikovyeung2026universal} imply
\[
  \frac1N\sum_{j=1}^N c_j^N=1,
  \qquad
  \frac1N\sum_{j=1}^N(c_j^N-1)^2\longrightarrow0,
\]
so the column sums \(c_j^N\) converge to \(1\) in empirical \(L^2\). The CLT
result in
\cite[Theorem~2.4]{shkolnikovyeung2026universal} justifies replacing
\(c_j^N\) by \(1\) in the leading interaction term, which then converges to
\[
\left\langle
\bar\rho_t,(K*\eta_t)\cdot\nabla\varphi
\right\rangle,
\]
which is the weak form of the term
\(-\nabla\cdot(\bar\rho_t(K*\eta))\) in the SPDE above.

For the sequential weights
\(a_{ij}^N=\mathbf 1_{\{j<i\}}/(i-1)\), however, the reversed-sum identity
\eqref{eq:intro-I00-reversed-sum} gives
\[
  c_j^N
  =
  \sum_{i=j+1}^N\frac1{i-1}
  =
  (\mathsf T_N h_0)(j/N),
  \qquad 1\le j\le N.
\]
Although \(N^{-1}\sum_{j=1}^N c_j^N\to1\), the sequential coefficients
\(c_j^N\) are not asymptotically constant as \(N\to\infty\), in contrast with
the column-balance condition in
\cite[Assumption~2.1(i)]{shkolnikovyeung2026universal}. Indeed, when
\(j/N\to u\in(0,1]\),
\[
  c_j^N=(\mathsf T_N h_0)(j/N)\longrightarrow h_1(u)=\log(1/u),
\]
and
\[
  \frac1N\sum_{j=1}^N(c_j^N-1)^2
  \longrightarrow
  \int_0^1\bigl(\log(1/u)-1\bigr)^2\,\dd u
  =1.
\]
Moreover,
\[
  \max_{1\le j\le N}c_j^N
  =
  c_1^N
  =
  \sum_{i=2}^N(i-1)^{-1}
  \longrightarrow\infty,
\]
which violates the uniform column-sum bound in
\cite[Assumption~2.1(i)]{shkolnikovyeung2026universal}.
The discussion in \cite[Subsection~1.5]{shkolnikovyeung2026universal}
explicitly states that the uniform sequential system considered here is not
covered by \cite[Theorem~2.4]{shkolnikovyeung2026universal}, precisely because
its column sums diverge.

Thus the same coefficient \(c_j^N\) has different limiting profiles in the
two regimes: \(1\) under column balance and \(h_1\) for the sequential
weights. Accordingly, the sequential interaction term converges to
\[
\left\langle
\bar\rho_t,(K*Y_t^1)\cdot\nabla\varphi
\right\rangle,
\]
with \(Y^1\) in place of the unweighted field \(Y^0\), so the equation for
\(Y^0\) couples to \(Y^1\). At level \(n\), predecessor summation produces
\(\mathsf T_N h_n\to\mathsf T h_n=h_{n+1}\), so the equation for \(Y^n\)
couples to \(Y^{n+1}\). This yields the countable logarithmic hierarchy in
Theorem~\ref{thm:log-hierarchy}.

\paragraph{Related literature.}
The fluctuation theory of kinetic and McKean--Vlasov models originates in the
work of McKean~\cite{mckean1975fluctuations},
Tanaka--Hitsuda~\cite{tanaka1981central}, and
Tanaka~\cite{tanaka1984limit}. Subsequent work developed both the analytical
framework and the range of models covered by the theory. For McKean--Vlasov
diffusions, the martingale and Hilbert-space framework was developed by
M\'el\'eard~\cite{meleard1996asymptotic} and
Fern\'andez--M\'el\'eard~\cite{fernandez1997hilbertian}, while related
fluctuation questions for Vlasov-type limits were studied by
Braun--Hepp~\cite{braun1977vlasov} and
Lancellotti~\cite{lancellotti2009fluctuations}. The analysis was later
extended by Budhiraja--Wu~\cite{budhiraja2016some} to weakly interacting
multi-type systems with common factors. In the singular-kernel setting,
Wang--Zhao--Zhu~\cite{wang2023gaussian} combined the martingale approach with
entropy and Donsker--Varadhan estimates. The present proof draws on the same
combination of arguments.

Of particular relevance here is the passage from exchangeable to
non-exchangeable systems. At the law of large numbers level, mean-field limits
without exchangeability have been established for systems on inhomogeneous
and Erd\H{o}s--R\'enyi graphs
\cite{bhamidi2019weakly,delattre2016note,coppini2020law}, for graphon and dense
network models \cite{bayraktar2020graphon,jabin2021mean}, and for sequential
particle systems \cite{du2023sequential,WangZhaoSequentialEntropy}. Moving to
the fluctuation scale, central limit theorems have been obtained for
finite-type systems
\cite{budhiraja2016some}, spatially extended systems
\cite{lucon2016transition}, and interacting diffusions on random graphs
\cite{coppini2023central}. The universal central limit theorem of
Shkolnikov--Yeung~\cite{shkolnikovyeung2026universal} provides the closest
comparison with the present setting. Its column-balance assumptions, however,
exclude the sequential weights considered here.

\paragraph{Proof strategy.}
The proof follows a tightness--identification--uniqueness argument.
Deterministic estimates for the logarithmic weights, combined with weighted
negative-Sobolev bounds, yield tightness of the entire countable family in
path space. To identify a subsequential limit, we use the predecessor-sum
asymptotics \(\mathsf T_N h_n\to h_{n+1}\) and reverse the interaction sums at
level \(n\), which brings \(Y^{n+1}\) into the equation for \(Y^n\). A weighted
central limit theorem and a martingale central limit theorem identify the
initial fields and the martingale noises, respectively, while
conditional-measure replacement shows that the nonlinear remainder vanishes.
These ingredients determine the hierarchy satisfied by every subsequential
limit. Finally, a weighted Volterra estimate for the mild hierarchy gives
pathwise uniqueness. Together with the weak existence obtained through
compactness and identification, the Yamada--Watanabe principle yields a
probabilistically strong solution; uniqueness then removes the subsequence and
gives convergence of the whole sequence.

\paragraph{Organization of the paper.}
In Section~\ref{sec:preliminaries}, we introduce the notation, recall the
entropy and conditional-law estimates from the companion paper, and establish
deterministic estimates for the logarithmic weights and predecessor sums. In
Section~\ref{sec:weighted-tightness}, we prove tightness of the countable
family of logarithmically weighted fluctuation fields. In
Section~\ref{sec:identification-inputs}, we derive the finite-particle
identities and identify the limits of the leading interaction term, the
initial fluctuations, and the martingale noise. In
Section~\ref{sec:limit-identification}, we establish conditional-measure
replacement and use it to identify every subsequential limit. In
Section~\ref{sec:uniqueness-main-result}, we prove uniqueness for the limiting
hierarchy and complete the proof of the main result.

\section{Preliminaries}\label{sec:preliminaries}

In this section, we introduce the notation and give the estimates used
throughout the proof. We first fix the function spaces and notation, then recall the entropy
and conditional-law estimates from \cite{WangZhaoSequentialEntropy}, and
finally prove the deterministic bounds for the logarithmic weights and
predecessor sums.

\subsection{Notation}\label{subsec:notation}

We write \(\mathbb N_0:=\{0,1,2,\ldots\}\), \(\Law(Z)\) for the law of a random variable \(Z\), \(H(\mu\mid\nu)\) for the relative entropy of \(\mu\) with respect to \(\nu\), and \(\|\cdot\|_{\mathrm{TV}}\) for total variation distance. The pairing between a signed measure or distribution \(\gamma\) and a test function \(\varphi\) is denoted by \(\langle\gamma,\varphi\rangle\).
Constants denoted by \(C\) may change from line to line; constants with subscripts indicate the parameters on which they may depend, but never on \(N\) or on the particle label unless this is stated explicitly.

For \(s\in\R\), \(H^s=H^s(\R^d)\) denotes the usual Sobolev space. For
\(\alpha\ge0\), let \(G_\alpha\) denote the positive symmetric convolution
kernel for \((I-\Delta)^{-\alpha}\). Then, for finite signed measures \(\eta\),
\[
\|\eta\|_{H^{-\alpha}}^2
=
\iint_{\R^d\times\R^d}
G_\alpha(x-y)\,\eta(\dd x)\,\eta(\dd y),
\]
whenever the right-hand side is finite.

For \(m\ge0\) and \(\alpha\ge0\), set
\[
\langle x\rangle=(1+|x|^2)^{1/2},
\qquad
\|\phi\|_{H^{\alpha,m}}:=\|\langle\cdot\rangle^m\phi\|_{H^\alpha}.
\]
Let \(H^{\alpha,m}\) be the completion of \(C_c^\infty(\R^d)\) under this norm. The weighted negative Sobolev space is
\[
H^{-\alpha,-m}:=(H^{\alpha,m})'.
\]
Thus \(H^{-\alpha,0}=H^{-\alpha}\). For \(\alpha,m\ge0\), write
\[
E_{\alpha,m}:=C([0,T];H^{-\alpha,-m}).
\]
For the fixed parameters \(\beta>d/2+4\) and \(m>d/2+1\), the path space in
the main result is
\[
E=E_{\beta,m}=C([0,T];H^{-\beta,-m}).
\]
We equip \(E\) with the uniform metric
\[
\rho_E(z,\widetilde z)
:=
\sup_{t\le T}\|z_t-\widetilde z_t\|_{H^{-\beta,-m}}.
\]
On \(E^{\mathbb N_0}\), we use the standard product metric
\[
d_E(\mathbf z,\widetilde{\mathbf z})
:=
\sum_{n=0}^\infty 2^{-(n+1)}
\bigl(1\wedge \rho_E(z^n,\widetilde z^n)\bigr).
\]
The metric \(d_E\) induces the countable product topology. For
\(0\le\alpha_0<\alpha_1\) and \(m>0\), the embedding
\begin{equation}\label{eq:weighted-rellich-embedding}
H^{-\alpha_0}\hookrightarrow H^{-\alpha_1,-m}
\end{equation}
is compact. The Sobolev and Rellich embeddings used here can be found, for
instance, in \cite{triebel1992theoryII}. If \(\alpha>d/2+1\) and \(m\ge0\), Sobolev
embedding gives
\begin{equation}\label{eq:delta-gradient-sobolev-bound}
\sup_{x\in\R^d}
\bigl(
\|\delta_x\|_{H^{-\alpha,-m}}
+
\|\nabla\delta_x\|_{H^{-\alpha,-m}}
\bigr)
<\infty.
\end{equation}

For the full empirical measure we write
\[
\mu_t^N:=\frac1N\sum_{i=1}^N\delta_{X_t^i},
\]
and, for \(i\ge2\),
\[
\mu_t^{i-1}:=\frac1{i-1}\sum_{j<i}\delta_{X_t^j}.
\]
If \(g:(0,1]\to\R\) is a deterministic weight, set
\[
Y_t^N(g):=
\frac1{\sqrt N}
\sum_{i=1}^N
g(i/N)(\delta_{X_t^i}-\bar\rho_t).
\]
We write \(Y^N(g)=(Y_t^N(g))_{t\in[0,T]}\). For
\[
h_n(u):=\frac{\log(1/u)^n}{n!},\qquad u\in(0,1],
\]
we have \(Y^{N,n}=Y^N(h_n)\). Conditional
expectations in the replacement arguments are taken with respect to
\[
\mathscr G_t^{i-1}:=\sigma(X_t^1,\ldots,X_t^{i-1}),
\qquad i\ge2.
\]
For path laws, \(P^{1:i}_{[0,t]}\) denotes the law of \((X^1,\ldots,X^i)\) on \([0,t]\), and \(\bar P_{[0,t]}\) denotes the McKean--Vlasov path law.

\subsection{Entropy and conditional estimates}
\label{subsec:entropy-conditional}

Define the interaction error by
\[
\Delta_t^1:=-(K*\bar\rho_t)(X_t^1),
\qquad
\Delta_t^i
:=
(K*\mu_t^{i-1})(X_t^i)
-
(K*\bar\rho_t)(X_t^i),
\qquad i\ge2.
\]

For \(i\ge2\), set
\[
R_i(t)
:=
H\!\left(
P^{1:i}_{[0,t]}
\,\middle|\,
P^{1:i-1}_{[0,t]}\otimes \bar P_{[0,t]}
\right).
\]

The following lemma gives the estimates from
\cite{WangZhaoSequentialEntropy} used below in the smooth convolutional setting
with identity diffusion; the proof gives the precise references.

\begin{lemma}\label{lem:paper1-estimates}
Under \hyperref[ass:A]{Assumption~A}, the following statements hold.
\begin{enumerate}
\item[\textup{(i)}]
There is a constant \(C_T<\infty\), depending only on \(T,d\) and the coefficient bounds in \hyperref[ass:A]{Assumption~A}, but not on \(i\) or \(N\), such that for \(i\ge2\),
\[
R_i(T)\le C_T(i-1)^{-1},
\qquad
\mathbb E\int_0^T|\Delta_t^i|^2\,\dd t\le C_T(i-1)^{-1}.
\]

\item[\textup{(ii)}]
Write \(X_t^{1:i}:=(X_t^1,\ldots,X_t^i)\) and set
\(\nu_t^1:=\Law(X_t^1)\). For every \(i\ge2\), there is a probability kernel
\[
f_t^i:(\R^d)^{i-1}\to\mathcal P(\R^d),
\qquad
(t,x^{1:i-1})\longmapsto f_t^i(x^{1:i-1},\cdot),
\]
jointly Borel measurable in \((t,x^{1:i-1})\), such that
\[
\dd t\otimes\Law(X_t^{1:i})(\dd x^{1:i})
=
\dd t\otimes\Law(X_t^{1:i-1})(\dd x^{1:i-1})
\,f_t^i(x^{1:i-1},\dd x_i).
\]
Define
\[
\nu_t^i(\omega)
:=
f_t^i(X_t^{1:i-1}(\omega),\cdot).
\]
Then \((t,\omega)\mapsto\nu_t^i(\omega)\) is jointly measurable and
\[
\nu_t^i=\Law(X_t^i\mid\mathscr G_t^{i-1})
\quad\text{in the }\dd t\otimes\mathbb P\text{-a.s.\ sense}.
\]
For \(i\ge2\), set
\[
\bar\nu_t^{i-1}:=\frac1{i-1}
\sum_{j=1}^{i-1}\nu_t^j.
\]

\item[\textup{(iii)}]
For Lebesgue-a.e. \(t\in[0,T]\) and \(i\ge2\),
\[
\mathbb E H(\nu_t^i\mid\bar\rho_t)
\le R_i(t)
\le R_i(T)
\le C_T(i-1)^{-1},
\]
\[
\mathbb E\|\nu_t^i-\bar\rho_t\|_{\mathrm{TV}}^2
\le C_T(i-1)^{-1},
\]
and
\[
\mathbb E\|\bar\nu_t^{i-1}-\bar\rho_t\|_{\mathrm{TV}}^2
\le C_T(i-1)^{-1}.
\]

\item[\textup{(iv)}]
For every deterministic bounded measurable function \(\phi_t(x,y)\), uniformly
for a.e.\ \(t\in[0,T]\), \(i\ge2\), and \(\lambda\in\R\),
\[
\begin{aligned}
&\mathbb E
\int
\exp\!\left[
\frac{\lambda}{i-1}
\left(
\phi_t(x,X_t^1)-\int \phi_t(x,y)\Law(X_t^1)(\dd y)
\right.\right.\\
&\hspace{4.7cm}\left.\left.
+\sum_{j=2}^{i-1}
\left[
\phi_t(x,X_t^j)-\int \phi_t(x,y)\nu_t^j(\dd y)
\right]
\right)
\right]
\bar\rho_t(\dd x)\\
&\qquad\le
\exp\left(C_\phi\lambda^2(i-1)^{-1}\right),
\end{aligned}
\]
where \(C_\phi<\infty\) depends only on a uniform bound for \(\phi\).
\end{enumerate}
\end{lemma}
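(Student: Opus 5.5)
This lemma is recalled from the companion paper \cite{WangZhaoSequentialEntropy}, so the plan is to specialize the estimates proved there rather than re-derive them. The specialization is to the present smooth convolutional setting: identity diffusion, bounded smooth drift \(b\), and bounded smooth \(K\). Concretely, I would check that Assumption~A implies the hypotheses of the incremental relative entropy theorem there: bounded interaction, i.i.d.\ initial data with law \(\bar\rho_0\), and nondegenerate constant diffusion. I would then quote its conclusions one item at a time.

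\emph{Item (i).} Girsanov's theorem gives
\[
R_i(t)=\tfrac12\,\mathbb E\int_0^t\bigl|K*\mu_s^{i-1}(X_s^i)-K*\bar\rho_s(X_s^i)\bigr|^2\,\dd s
=\tfrac12\,\mathbb E\int_0^t|\Delta_s^i|^2\,\dd s .
\]
Indeed, under the reference measure \(P^{1:i-1}\otimes\bar P\), particle \(i\) is an independent McKean--Vlasov path, and the two drifts differ exactly by \(\Delta^i\). To bound \(\mathbb E|\Delta_s^i|^2\), I would first replace \(X^i\) by an independent \(\bar\rho\)-distributed copy; this costs an entropy term via Donsker--Varadhan. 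I would then apply conditional measure replacement: each \(\delta_{X^j}\) is replaced by \(\nu^j\), which leaves a martingale-type sum of variance \(O((i-1)^{-1})\). The gap \(\bar\nu^{i-1}-\bar\rho\) is controlled by Pinsker's inequality and the earlier \(R_j\).

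The result is a discrete Grönwall-type recursion of the form
\[
R_i\le \frac{C}{i-1}+\frac{C}{(i-1)^2}\sum_{j<i}R_j ,
\]
which closes to \(R_i\le C_T/(i-1)\). This is the main obstacle, and it is exactly the content of the companion paper, so I would cite its main theorem directly.

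\emph{Item (ii).} This is the existence of a jointly measurable regular conditional law. I would disintegrate the measure \(\dd t\otimes\Law(X_t^{1:i})\) on \([0,T]\times(\R^d)^{i}\) with respect to its marginal in \((t,x^{1:i-1})\). This is legitimate because \((\R^d)^i\) is Polish and \((t,\omega)\mapsto X_t\) is jointly measurable, since the paths are continuous. Composing the resulting kernel with \(X_t^{1:i-1}\) gives \(\nu^i\).

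\emph{Item (iii).} The chain rule for relative entropy, together with the data-processing inequality under restriction to time \(t\), gives
\[
\mathbb E\,H(\nu_t^i\mid\bar\rho_t)\le H\bigl(\Law(X_t^{1:i})\,\big|\,\Law(X_t^{1:i-1})\otimes\bar\rho_t\bigr)\le R_i(t).
\]
Pinsker's inequality then yields the total-variation bound for \(\nu_t^i\). For the average, convexity gives
\[
\|\bar\nu^{i-1}-\bar\rho\|_{\mathrm{TV}}^2\le\frac1{i-1}\sum_{j<i}\|\nu^j-\bar\rho\|_{\mathrm{TV}}^2 .
\]
The term with \(j=1\) vanishes because \(\nu^1=\Law(X^1)=\bar\rho\): at \(i=1\) there is no interaction, so \(X^1\) evolves as the McKean--Vlasov dynamics only when \(K*\bar\rho\) is absorbed. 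If this identification fails, I would instead bound the \(j=1\) term by a constant and absorb it into the \(1/(i-1)\) rate. Summing the remaining terms gives \(\frac1{i-1}\sum_{j<i}C/j\lesssim \log i/(i-1)\). The stated bound without the logarithm therefore presumably uses the sharper averaged estimate proved in the companion paper, and I would cite that.

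\emph{Item (iv).} This is an exponential martingale-difference bound. The summands \(\xi_j=\phi(x,X^j)-\int\phi(x,y)\,\nu^j(\dd y)\) are bounded by \(2\|\phi\|_\infty\). They are also centered conditionally on \(\mathscr G^{j-1}\), because \(\nu^j\) is the conditional law of \(X^j\) given \(\mathscr G^{j-1}\). Since \(\mathscr G_t^{j-1}\subset\mathscr G_t^{j}\) at fixed \(t\), the \(\xi_j\) form a martingale difference sequence in \(j\). Applying Hoeffding's lemma iteratively by conditioning gives
\[
\mathbb E\exp\Bigl(\tfrac{\lambda}{i-1}\textstyle\sum_j\xi_j\Bigr)\le\exp\bigl(C\lambda^2/(i-1)\bigr)
\]
for each fixed \(x\). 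Integrating in \(\bar\rho_t(\dd x)\) and using Fubini finishes the argument.
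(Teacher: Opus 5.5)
\textbf{There is a gap in the averaged estimate of (iii).} Items (i), (ii), (iv) and the first two bounds in (iii) follow essentially the paper's route: citation of the companion paper for (i), time--space disintegration for (ii), data processing plus Pinsker, and iterated conditional Hoeffding. The problem is the bound on \(\bar\nu_t^{i-1}\).

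You apply convexity to the \emph{squared} norms, so you average \(\mathbb E\|\nu_t^j-\bar\rho_t\|_{\mathrm{TV}}^2\lesssim (j-1)^{-1}\). The harmonic sum then unavoidably produces the \(\log i/(i-1)\) you found. As written, the proposal does not prove the stated rate, and your appeal to an unspecified ``sharper averaged estimate'' in the companion paper is not needed.

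The missing step is to use Minkowski's inequality in \(L^2(\Omega)\) instead. That averages the \(L^2\)-norms, which decay like \((j-1)^{-1/2}\):
\[
\bigl(\mathbb E\|\bar\nu_t^{i-1}-\bar\rho_t\|_{\mathrm{TV}}^2\bigr)^{1/2}
\le\frac1{i-1}\Bigl(2+\sum_{j=2}^{i-1}\bigl(\mathbb E\|\nu_t^j-\bar\rho_t\|_{\mathrm{TV}}^2\bigr)^{1/2}\Bigr)
\le\frac1{i-1}\Bigl(2+C_T\sum_{j=2}^{i-1}(j-1)^{-1/2}\Bigr)
\le\frac{C_T}{\sqrt{i-1}} .
\]
Squaring gives \(C_T(i-1)^{-1}\); this is exactly the paper's argument. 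The logarithmic loss would in fact be harmless in the later uses (the terms \(B\) and \(D\) in the replacement proposition, where the \(N^{-1}\) and \(N^{-1/2}\) prefactors absorb extra logarithms). Still, the lemma as stated claims the sharp rate.

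\textbf{Your claim about \(\nu_t^1\) is wrong.} You assert \(\nu_t^1=\Law(X_t^1)=\bar\rho_t\), but this is false. The first particle has drift \(b\) only, without \(K*\bar\rho_t\), which is precisely why \(\Delta_t^1=-(K*\bar\rho_t)(X_t^1)\) is nonzero. So \(\Law(X_t^1)\ne\bar\rho_t\) in general. Your fallback, the trivial bound \(\|\Law(X_t^1)-\bar\rho_t\|_{\mathrm{TV}}\le 2\), is the correct treatment. It produces the constant \(2\) in the display above and contributes only \(O((i-1)^{-2})\) after squaring.
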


\begin{proof}
The incremental entropy estimate in \textup{(i)} is the i.i.d.\ specialization
of \cite[Theorem~1]{WangZhaoSequentialEntropy}; its Girsanov identity is
\cite[Lemma~3.1]{WangZhaoSequentialEntropy}. These results give both
\(R_i(T)\lesssim(i-1)^{-1}\) and the corresponding estimate for
\(\Delta^i\). The kernels, disintegration identity, and
\(\dd t\otimes\mathbb P\)-a.s.\ conditional-law interpretation in
\textup{(ii)} are precisely
\cite[Lemma~3.2]{WangZhaoSequentialEntropy}. By data processing from path
space to the time marginal,
\(\mathbb EH(\nu_t^i\mid\bar\rho_t)\le R_i(t)\).
Applying Pinsker's inequality then proves the first total-variation estimate
in \textup{(iii)}.

For the averaged estimate in \textup{(iii)}, we apply Minkowski's inequality.
Using \(\|\Law(X_t^1)-\bar\rho_t\|_{\mathrm{TV}}\le2\) together with the
preceding individual estimates, we obtain
\[
\begin{aligned}
\left(
\mathbb E\|\bar\nu_t^{i-1}-\bar\rho_t\|_{\mathrm{TV}}^2
\right)^{1/2}
&\le
\frac1{i-1}
\left(
2+
\sum_{j=2}^{i-1}
\left(
\mathbb E\|\nu_t^j-\bar\rho_t\|_{\mathrm{TV}}^2
\right)^{1/2}
\right)\\
&\le \frac{C_T}{\sqrt{i-1}}.
\end{aligned}
\]

For \textup{(iv)}, fix \(t\), \(i\), and \(x\) as in the statement, and set
\[
M_\phi:=\sup_{s\le T}\sup_{x,y\in\mathbb R^d}|\phi_s(x,y)|,
\qquad
D_j(x):=\phi_t(x,X_t^j)-\int_{\mathbb R^d}\phi_t(x,y)\nu_t^j(\dd y),
\quad 1\le j<i,
\]
where \(\mathscr G_t^0\) is the trivial sigma-field. By the definition of
\(\nu_t^j\), the random variable \(D_j(x)\) is
\(\mathscr G_t^j\)-measurable and
\[
\mathbb E\bigl[D_j(x)\mid\mathscr G_t^{j-1}\bigr]=0.
\]
Conditionally on \(\mathscr G_t^{j-1}\), its range is contained in an
interval of length at most \(2M_\phi\). The conditional Hoeffding lemma
therefore gives, for every \(\theta\in\mathbb R\),
\[
\mathbb E\bigl[\exp(\theta D_j(x))\mid\mathscr G_t^{j-1}\bigr]
\le \exp\!\left(\frac{M_\phi^2\theta^2}{2}\right).
\]
Iterating this estimate over \(j=1,\ldots,i-1\), as in the proof of
\cite[Lemma~3.3]{WangZhaoSequentialEntropy}, yields
\[
\mathbb E\exp\!\left(\theta\sum_{j=1}^{i-1}D_j(x)\right)
\le \exp\!\left(\frac{(i-1)M_\phi^2\theta^2}{2}\right).
\]
Taking \(\theta=\lambda/(i-1)\) and integrating with respect to
\(\bar\rho_t(\dd x)\) proves \textup{(iv)} with
\(C_\phi=M_\phi^2/2\).
\end{proof}

\subsection{Logarithmic weights and predecessor sums}

The operator \(\mathsf T\) is the limit of predecessor summation. Starting from
\(h_0=1\), define
\[
h_0(u)=1,\qquad h_{n+1}=\mathsf T h_n,
\qquad
(\mathsf T f)(u)=\int_u^1\frac{f(v)}{v}\,\dd v.
\]

\begin{lemma}\label{lem:logarithmic-weights}
For every \(n\ge0\), with \(h_0\equiv1\),
\[
h_n(u)=\frac{\log(1/u)^n}{n!}.
\]
Moreover, for all \(n,k\ge0\),
\[
\int_0^1 h_n(u)h_k(u)\,\dd u
=
\frac{(n+k)!}{n!\,k!}.
\]
\end{lemma}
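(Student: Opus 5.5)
The plan is to prove the closed form by induction on \(n\) and then to evaluate the inner products by the substitution \(u=e^{-s}\), which turns both claims into elementary integrals.

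For \(n=0\) the formula reads \(h_0\equiv1\), which holds by definition. Suppose \(h_n(u)=\log(1/u)^n/n!\) for some \(n\ge0\). Then
\[
h_{n+1}(u)=(\mathsf T h_n)(u)=\int_u^1\frac{\log(1/v)^n}{n!\,v}\,\dd v .
\]
With \(v=e^{-s}\) we have \(\dd v/v=-\dd s\), and the interval \(v\in[u,1]\) corresponds to \(s\in[0,\log(1/u)]\). Hence
\[
h_{n+1}(u)=\int_0^{\log(1/u)}\frac{s^n}{n!}\,\dd s=\frac{\log(1/u)^{n+1}}{(n+1)!},
\]
which closes the induction. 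The integrand is nonnegative and continuous on \([u,1]\) for every fixed \(u\in(0,1]\), so there are no integrability issues at this step.

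For the inner products, the substitution \(u=e^{-s}\) gives \(\dd u=e^{-s}\,\dd s\) and \(\log(1/u)=s\). Therefore
\[
\int_0^1 h_n(u)h_k(u)\,\dd u
=\frac1{n!\,k!}\int_0^\infty s^{n+k}e^{-s}\,\dd s
=\frac{\Gamma(n+k+1)}{n!\,k!}
=\frac{(n+k)!}{n!\,k!}.
\]
The integrand is nonnegative, so the change of variables is justified for the improper integral at \(u=0\) by monotone convergence. The Gamma integral is finite, which also shows in passing that every \(h_n\) lies in \(L^2(0,1)\).

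No step is a real obstacle. The only point needing care is the change of variables near \(u=0\), where \(h_n\) is unbounded, and this is handled by nonnegativity and monotone convergence.
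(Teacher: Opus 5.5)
Your proposal is correct and follows essentially the same route as the paper: induction on \(n\) via the substitution \(v=e^{-s}\), which gives \((\mathsf T h_n)(e^{-s})=\int_0^s r^n/n!\,\dd r\), and then \(u=e^{-s}\) to reduce the inner product to the Gamma integral \(\int_0^\infty s^{n+k}e^{-s}\,\dd s=(n+k)!\). Your extra remark that nonnegativity and monotone convergence justify the change of variables in the improper integral at \(u=0\) is a harmless addition that the paper leaves implicit.
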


\begin{proof}
The formula for \(h_n\) follows by induction from the change of variables \(v=e^{-s}\):
\[
(\mathsf T h_n)(e^{-s})=\int_0^s \frac{r^n}{n!}\,\dd r=\frac{s^{n+1}}{(n+1)!}.
\]
For the covariance factor, set \(u=e^{-s}\). Then
\[
\int_0^1 h_n(u)h_k(u)\,\dd u
=
\frac1{n!\,k!}\int_0^\infty s^{n+k}e^{-s}\,\dd s
=
\frac{(n+k)!}{n!\,k!}.
\]
\end{proof}

The following elementary bounds on \(h_n\) will be used in the tightness and
replacement estimates.

\begin{lemma}\label{lem:elementary-weights}
For every fixed \(\ell\ge0\),
\[
\max_{0\le n\le \ell+1}
\left(
\|h_n\|_{L^2(0,1)}
+
\int_0^1 h_n(u)u^{-1/2}\,\dd u
\right)<\infty,
\]
and
\[
\frac1{\sqrt N}
\max_{0\le n\le \ell+1}
\max_{1\le i\le N} h_n(i/N)\to0.
\]
\end{lemma}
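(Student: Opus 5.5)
Since only finitely many indices $n\le \ell+1$ occur, it suffices to prove each bound for a single fixed $n$; the maximum over $0\le n\le\ell+1$ is then a maximum of finitely many finite quantities, or of finitely many null sequences. The case $n=0$ is trivial, so we assume $n\ge1$ throughout.

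For the integral bounds we substitute $u=e^{-s}$, as in the proof of Lemma~\ref{lem:logarithmic-weights}. This gives
\[
\|h_n\|_{L^2(0,1)}^2=\frac{(2n)!}{(n!)^2}.
\]
It also gives
\[
\int_0^1 h_n(u)u^{-1/2}\,\dd u
=\frac1{n!}\int_0^\infty s^n e^{-s/2}\,\dd s
=2^{n+1}.
\]
Both quantities are finite for each $n$, which proves the first claim.

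For the second claim, note that $h_n$ is nonnegative and decreasing on $(0,1]$, because $\log(1/u)$ is. Hence
\[
\max_{1\le i\le N}h_n(i/N)=h_n(1/N)=\frac{(\log N)^n}{n!}.
\]
Therefore
\[
\frac1{\sqrt N}\max_{1\le i\le N}h_n(i/N)=\frac{(\log N)^n}{n!\sqrt N}.
\]
This tends to $0$ as $N\to\infty$, since any fixed power of the logarithm is $o(N^{1/2})$. Taking the maximum over the finitely many $n\le\ell+1$ preserves convergence to zero.

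There is no real obstacle here. The only point needing care is monotonicity, which locates the maximum at $i=1$, where the logarithmic growth is exactly $(\log N)^n/n!$.
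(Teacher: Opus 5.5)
Your proof is correct and follows essentially the same route as the paper: the substitution $u=e^{-s}$ gives finiteness of the two integrals, and a polylogarithmic bound on $\max_i h_n(i/N)$ gives the second claim. The only cosmetic difference is that you use monotonicity to identify the exact maximum $h_n(1/N)=(\log N)^n/n!$, whereas the paper uses the cruder uniform bound $C_\ell(1+\log N)^{\ell+1}$.
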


\begin{proof}
By Lemma~\ref{lem:logarithmic-weights}, \(h_n(u)=\log(1/u)^n/n!\). With \(u=e^{-s}\),
\[
\int_0^1 h_n(u)^2\,\dd u
=
\frac1{(n!)^2}\int_0^\infty s^{2n}e^{-s}\,\dd s
<\infty
\]
and
\[
\int_0^1 h_n(u)u^{-1/2}\,\dd u
=
\frac1{n!}\int_0^\infty s^ne^{-s/2}\,\dd s
<\infty.
\]
The maximum is over finitely many values of \(n\). Also
\[
\max_{1\le i\le N}h_n(i/N)\le C_{\ell}(1+\log N)^{\ell+1},
\]
so the \(N^{-1/2}\) maximum tends to zero.
\end{proof}

We also need the following logarithmic sums, which vanish at rates compatible
with the entropy and conditional-bias bounds in
Lemma~\ref{lem:paper1-estimates}\textup{(iii)}.

\begin{lemma}\label{lem:polylog-summability}
For every fixed \(\ell\ge0\),
\[
\max_{0\le n\le \ell+1}
\frac1N\sum_{i=2}^N
\frac{h_n(i/N)^2}{i-1}\to0,
\]
and
\[
\max_{0\le n\le \ell+1}
\frac1{\sqrt N}\sum_{i=2}^N
\frac{h_n(i/N)}{i-1}\to0.
\]
\end{lemma}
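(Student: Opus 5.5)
Since the maximum is over the finitely many indices \(0\le n\le \ell+1\), it suffices to prove both limits for each fixed \(n\). The plan is to use the crude uniform bound already recorded in the proof of Lemma~\ref{lem:elementary-weights}:
\[
\max_{1\le i\le N} h_n(i/N)=h_n(1/N)=\frac{(\log N)^n}{n!}\le C_\ell(1+\log N)^{\ell+1}.
\]
We pair it with the harmonic-sum estimate \(\sum_{i=2}^N (i-1)^{-1}\le 1+\log N\). No finer use of the profile of \(h_n\) should be needed, because both sums carry a polynomial gain in \(N\) against only polylogarithmic losses.

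For the first sum, bound \(h_n(i/N)^2\le C_\ell^2(1+\log N)^{2\ell+2}\) uniformly in \(i\). This gives
\[
\frac1N\sum_{i=2}^N\frac{h_n(i/N)^2}{i-1}\le \frac{C_\ell^2(1+\log N)^{2\ell+3}}{N}\to0.
\]
For the second sum, the same bound gives
\[
\frac1{\sqrt N}\sum_{i=2}^N\frac{h_n(i/N)}{i-1}\le \frac{C_\ell(1+\log N)^{\ell+2}}{\sqrt N}\to0.
\]
Both conclusions follow because any fixed power of \(\log N\) is \(o(N^{\varepsilon})\) for every \(\varepsilon>0\).

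I do not expect a genuine obstacle; the only point requiring care is uniformity in \(n\), and that is immediate because \(n\) ranges over a finite set depending only on \(\ell\). If sharper rates were later needed, one could instead compare the sums with the integrals \(\int_{1/N}^1 h_n(u)^2u^{-1}\,\dd u\) and \(\int_{1/N}^1 h_n(u)u^{-1}\,\dd u\) via monotonicity of \(h_n\). Since \(h_n(u)u^{-1}\) is the derivative of \(-h_{n+1}(u)\), these integrals equal, up to constants, \((\log N)^{2n+1}\) and \(h_{n+1}(1/N)=(\log N)^{n+1}/(n+1)!\). This recovers the same polylogarithmic orders, but the crude bound already suffices for the statement.
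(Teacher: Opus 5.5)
Your proposal is correct and is essentially the paper's own proof: bound \(h_n(i/N)\) uniformly by \(C_\ell(1+\log N)^{\ell+1}\), multiply by the harmonic sum \(\sum_{i=2}^N (i-1)^{-1}\le 1+\log N\), and note that the resulting polylogarithmic factor is beaten by \(N^{-1}\) and \(N^{-1/2}\) respectively. Uniformity in \(n\) is immediate because \(n\) ranges over a finite set.
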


\begin{proof}
For \(0\le n\le \ell+1\),
\[
h_n(i/N)\le C_{\ell}(1+\log(N/i))^{\ell+1}
\le C_{\ell}(1+\log N)^{\ell+1}.
\]
Therefore
\[
\frac1N\sum_{i=2}^N
\frac{h_n(i/N)^2}{i-1}
\le
\frac{C_{\ell}(1+\log N)^{2\ell+2}}N
\sum_{i=2}^N\frac1{i-1}
\longrightarrow0,
\]
and
\[
\frac1{\sqrt N}\sum_{i=2}^N
\frac{h_n(i/N)}{i-1}
\le
\frac{C_{\ell}(1+\log N)^{\ell+1}}{\sqrt N}
\sum_{i=2}^N\frac1{i-1}\to0,
\]
uniformly over the indicated values of \(n\).
\end{proof}

The quantity \(\mathcal W_n\) below contains the weight terms that appear in
the energy estimate of Proposition~\ref{prop:weighted-apriori}. Its growth
follows directly from the explicit formula for \(h_n\).

\begin{lemma}\label{lem:logarithmic-energy-weight-growth}
For \(n\ge0\), set
\[
\mathcal W_n
:=
\left[
\|h_n\|_{L^2(0,1)}
+
\int_0^1 h_n(u)u^{-1/2}\,\dd u
\right]^2
+
\sup_{N\ge1}\frac{|h_n(1/N)|^2}{N}.
\]
There exists \(C<\infty\), independent of \(n\), such that
\[
\mathcal W_n\le C4^n,\qquad n\ge0.
\]
\end{lemma}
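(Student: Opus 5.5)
We bound each of the three terms in \(\mathcal W_n\) separately by an \(n\)-independent multiple of \(4^n\), always through the explicit formula \(h_n(u)=\log(1/u)^n/n!\) from Lemma~\ref{lem:logarithmic-weights} and the substitution \(u=e^{-s}\) used there.

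\emph{First term.} By Lemma~\ref{lem:logarithmic-weights} with \(k=n\),
\[
\|h_n\|_{L^2(0,1)}^2=\frac{(2n)!}{(n!)^2}=\binom{2n}{n}\le 4^n .
\]
Hence \(\|h_n\|_{L^2(0,1)}\le 2^n\).

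\emph{Second term.} Substituting \(u=e^{-s}\) gives
\[
\int_0^1 h_n(u)u^{-1/2}\,\dd u=\frac1{n!}\int_0^\infty s^n e^{-s/2}\,\dd s=2^{n+1}.
\]
So the bracket is at most \(2^n+2^{n+1}=3\cdot 2^n\), and its square is at most \(9\cdot 4^n\).

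\emph{Third term.} Here
\[
\frac{|h_n(1/N)|^2}{N}=\frac{(\log N)^{2n}}{(n!)^2 N}.
\]
Put \(x=\log N\ge0\), so \(1/N=e^{-x}\), and maximize over real \(x\ge0\). We have \(\sup_{x\ge0}x^{2n}e^{-x}=(2n)^{2n}e^{-2n}\), attained at \(x=2n\); for \(n=0\) the supremum is \(1\). Therefore
\[
\sup_{N\ge1}\frac{|h_n(1/N)|^2}{N}\le \frac{(2n)^{2n}e^{-2n}}{(n!)^2}.
\]
To bound this, first note the elementary inequality \(m^m e^{-m}\le m!\), which follows from \(e^m=\sum_k m^k/k!\ge m^m/m!\). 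With \(m=2n\) it gives
\[
\frac{(2n)^{2n}e^{-2n}}{(n!)^2}\le\frac{(2n)!}{(n!)^2}\le 4^n .
\]

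\emph{Conclusion.} Adding the two bounds yields \(\mathcal W_n\le 9\cdot4^n+4^n=10\cdot 4^n\), so the claim holds with \(C=10\). No step is a real obstacle; the only point needing care is the third term, where optimizing over real \(x\) and using the inequality \(m^me^{-m}\le m!\) avoids any appeal to Stirling's formula.
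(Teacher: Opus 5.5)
Your proof is correct and follows the paper's argument essentially line by line. The substitution \(u=e^{-s}\) gives \(\binom{2n}{n}\le 4^n\) and \(2^{n+1}\) for the first two terms, and the boundary term is handled by maximizing \(x^{2n}e^{-x}\) at \(x=2n\). The only difference is cosmetic: you apply \(m!\ge m^me^{-m}\) with \(m=2n\) and then reuse \(\binom{2n}{n}\le 4^n\), whereas the paper applies \(n!\ge(n/e)^n\) directly to \((n!)^2\); both yield the same bound \(4^n\).
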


\begin{proof}
By Lemma~\ref{lem:logarithmic-weights} and the change of variables \(u=e^{-s}\),
\[
\|h_n\|_{L^2(0,1)}^2
=
\frac{(2n)!}{(n!)^2}
\le 4^n
\]
and
\[
\int_0^1 h_n(u)u^{-1/2}\,\dd u
=
\frac1{n!}\int_0^\infty s^n e^{-s/2}\,\dd s
=
2^{n+1}.
\]
Thus the square of the sum of these two terms is bounded by \(C4^n\).
For the boundary term, if \(n=0\) then \(h_0(1/N)^2/N\le1\). If \(n\ge1\), set
\(x=\log N\). Then
\[
\frac{|h_n(1/N)|^2}{N}
=
e^{-x}\frac{x^{2n}}{(n!)^2}
\le
\sup_{y\ge0}e^{-y}\frac{y^{2n}}{(n!)^2}
=
e^{-2n}\frac{(2n)^{2n}}{(n!)^2}
\le 4^n,
\]
where the last inequality uses \(n!\ge(n/e)^n\). Combining the three estimates
proves \(\mathcal W_n\le C4^n\), after increasing \(C\) to include \(n=0\).
\end{proof}

For finite \(N\), define the discrete predecessor operator by
\begin{equation}\label{eq:TN-def}
(\mathsf T_N f)(j/N)
=
\sum_{i=j+1}^N\frac{f(i/N)}{i-1}.
\end{equation}
In the sequel \(\mathsf T_N f\) appears with \(f=h_n\), and we set
\((\mathsf T_N h_n)(1)=0\).

We will use the following Riemann limit for the martingale covariance.

\begin{lemma}\label{lem:riemann-weights}
For every fixed \(n,k\ge0\),
\[
\frac1N\sum_{i=1}^N h_n(i/N)h_k(i/N)
\to
\int_0^1 h_n(u)h_k(u)\,\dd u
=
\frac{(n+k)!}{n!\,k!}.
\]
\end{lemma}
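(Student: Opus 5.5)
The function \(g:=h_nh_k\) has the form \(g(u)=\log(1/u)^{n+k}/(n!\,k!)\). It is continuous, nonnegative and nonincreasing on \((0,1]\), it is unbounded only at \(u=0\), and it is integrable there. The plan is a monotone Riemann-sum comparison that handles the singularity at the origin directly. The value of the limit integral is already given by Lemma~\ref{lem:logarithmic-weights}, so only the convergence needs proof.

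Since \(g\) is nonincreasing, for \(i\ge1\) and \(u\in[(i-1)/N,i/N]\) we have \(g(i/N)\le g(u)\). Integrating over this cell and summing over \(i=1,\dots,N\) gives the upper bound
\[
\frac1N\sum_{i=1}^N g(i/N)\le\int_0^1 g(u)\,\dd u .
\]
For the matching lower bound, use \(g(i/N)\ge g(u)\) for \(u\in[i/N,(i+1)/N]\) with \(1\le i\le N-1\). Summing these cells gives
\[
\frac1N\sum_{i=1}^N g(i/N)\ge\frac1N\sum_{i=1}^{N-1}g(i/N)\ge\int_{1/N}^1 g(u)\,\dd u,
\]
where we also used \(g\ge0\).

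Since \(g\in L^1(0,1)\), for instance by Lemma~\ref{lem:elementary-weights} together with Cauchy--Schwarz, the tail \(\int_0^{1/N}g\) tends to \(0\) by dominated convergence. The two bounds therefore squeeze the Riemann sum to \(\int_0^1 h_nh_k\,\dd u\). By Lemma~\ref{lem:logarithmic-weights} this integral equals \((n+k)!/(n!\,k!)\).

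The only delicate point is the nonintegrable-looking singularity at \(u=0\) (the boundary term \(g(1/N)/N\)). Monotonicity absorbs it: the first cell is dropped in the lower bound and dominated by the integral in the upper bound. As a consistency check, \(g(1/N)/N\to0\), which also follows from Lemma~\ref{lem:logarithmic-energy-weight-growth}.
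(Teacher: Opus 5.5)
Your proof is correct. It rests on the same monotonicity of \(h_nh_k\) that the paper uses, but organizes the argument differently.

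\emph{The paper's route.} It splits \((0,1]\) into \((0,\varepsilon]\cup[\varepsilon,1]\). Monotonicity is used only to bound the right-endpoint sum on \((0,\varepsilon]\) by the small integral there. On \([\varepsilon,1]\) it invokes ordinary Riemann-sum convergence for the continuous integrand, and then lets \(\varepsilon\downarrow0\).

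\emph{Your route.} You sandwich the whole sum between \(\int_{1/N}^1 g\) and \(\int_0^1 g\) by comparing with left and right cells. This needs neither continuity nor an \(\varepsilon\)-limit. It also gives the explicit rate \(0\le\int_0^1 g-\frac1N\sum_{i=1}^N g(i/N)\le\int_0^{1/N}g=O\bigl(N^{-1}(1+\log N)^{n+k}\bigr)\). The paper's split is more robust only in that it needs monotonicity near the singular endpoint alone. That gain is irrelevant here, since \(h_nh_k\) is monotone on all of \((0,1]\).

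One small inaccuracy, in your closing remark only: Lemma~\ref{lem:logarithmic-energy-weight-growth} gives boundedness of \(\sup_N h_n(1/N)^2/N\), not decay, so it does not show \(g(1/N)/N\to0\). The decay follows directly from your own upper bound \(g(1/N)/N\le\int_0^{1/N}g\), or from Lemma~\ref{lem:product-weight-bounds}. The remark is not used in the argument anyway.
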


\begin{proof}
Write \(u=e^{-s}\). Then \(h_n(u)h_k(u)=s^{n+k}/(n!\,k!)\), which is integrable against \(e^{-s}\,\dd s\). Split \((0,1]\) into \((0,\varepsilon]\cup[\varepsilon,1]\). Since \(h_nh_k\) is nonincreasing, its right-endpoint sum on \((0,\varepsilon]\) is bounded by the corresponding integral and is therefore uniformly small as \(\varepsilon\downarrow0\). On \([\varepsilon,1]\), ordinary Riemann-sum convergence applies. The value of the integral is given by Lemma~\ref{lem:logarithmic-weights}.
\end{proof}

The next lemma controls \(\mathsf T_N h_n-h_{n+1}\) in \(L^2\) and in a
predecessor-weighted \(L^1\) norm.

\begin{lemma}\label{lem:TN-error-summability}
For every fixed \(n\ge0\), set
\[
e_N^{(n)}(j):=(\mathsf T_N h_n)(j/N)-h_{n+1}(j/N),
\qquad 1\le j\le N.
\]
Then
\[
\frac1N\sum_{j=1}^N |e_N^{(n)}(j)|^2\to0,
\]
and
\[
\frac1{\sqrt N}\sum_{j=1}^N
\frac{|e_N^{(n)}(j)|}{\sqrt j}\to0.
\]
\end{lemma}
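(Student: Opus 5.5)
The plan is to get a pointwise bound on $e_N^{(n)}(j)$ that compares the discrete predecessor sum with the integral $\int_{j/N}^1 h_n(v)v^{-1}\,\dd v$. Since $h_n$ is smooth and monotone on $(0,1]$, this is a Riemann-sum comparison. Write
\[
(\mathsf T_N h_n)(j/N)=\sum_{i=j+1}^N \frac{1}{N}\,\frac{h_n(i/N)}{(i-1)/N}.
\]
We then split the error into two pieces. The first replaces $(i-1)^{-1}$ by $i^{-1}$, at cost $\sum_{i>j} h_n(i/N)\bigl(\frac1{i-1}-\frac1i\bigr)\le h_n(j/N)\sum_{i>j}\frac{1}{i(i-1)}\le h_n(j/N)/j$, using that $h_n$ is nonincreasing. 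The second compares the right-endpoint sum $\sum_{i=j+1}^N \frac1N g(i/N)$, with $g(v)=h_n(v)/v$ nonincreasing and nonnegative, against $\int_{j/N}^1 g$. For a monotone function this difference is bounded by $\frac1N\bigl(g(j/N)-g(1)\bigr)\le \frac1N g(j/N)=h_n(j/N)/j$. Together these give the pointwise estimate
\[
|e_N^{(n)}(j)|\le \frac{2h_n(j/N)}{j},\qquad 2\le j\le N-1 .
\]
The endpoints need separate care. For $j=N$ the convention $(\mathsf T_N h_n)(1)=0=h_{n+1}(1)$ gives zero error. For $j=1$ the bound still holds up to a constant: $\sum_{i\ge2}h_n(i/N)/(i-1)$ is at most $C(1+\log N)^{n+1}$, and $h_{n+1}(1/N)\le C(1+\log N)^{n+1}$, so $|e_N^{(n)}(1)|\le C(1+\log N)^{n+1}$.

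With the pointwise bound, both sums reduce to polylogarithmic estimates, as in the proof of Lemma~\ref{lem:polylog-summability}. Using $h_n(j/N)\le C_n(1+\log N)^n$,
\[
\frac1N\sum_{j=1}^N|e_N^{(n)}(j)|^2\le \frac{C_n(1+\log N)^{2n+2}}{N}\Bigl(1+\sum_{j\ge1} j^{-2}\Bigr)\to0,
\]
and
\[
\frac1{\sqrt N}\sum_{j=1}^N\frac{|e_N^{(n)}(j)|}{\sqrt j}\le\frac{C_n(1+\log N)^{n+1}}{\sqrt N}\Bigl(1+\sum_{j\ge1}j^{-3/2}\Bigr)\to0.
\]
A cruder bound would in fact suffice: anything like $|e_N^{(n)}(j)|\lesssim (1+\log N)^{n+1}/j$ works, because both $\sum_j j^{-2}$ and $\sum_j j^{-3/2}$ converge.

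The main obstacle is the monotone Riemann-sum comparison. The integrand $h_n(v)/v$ blows up at $0$, so a naive mean-value bound using derivatives would produce a factor $1/j^2$ times $N$-dependent constants, and these must be handled carefully. The monotonicity argument avoids this. One has to check that the boundary term is exactly $\frac1N g(j/N)$, which equals $h_n(j/N)/j$, and that the sum and the integral run over matching ranges: the sum over $i\in\{j+1,\dots,N\}$ corresponds to the integral over $[j/N,1]$, so each interval $[(i-1)/N,i/N]$ is paired with its right endpoint. The telescoping then gives $0\le\int_{j/N}^1 g-\frac1N\sum_{i=j+1}^N g(i/N)\le \frac1N\bigl(g(j/N)-g(1)\bigr)$.
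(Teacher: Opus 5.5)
Your argument is correct, and it takes a different route from the paper. Both proofs use the same splitting $\frac1{i-1}=\frac1i+\frac1{i(i-1)}$.

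\textbf{The paper's route.} It only extracts the crude envelope $0\le(\mathsf T_Nh_n)(j/N)\le 2h_{n+1}(j/N)$, that is, $|e_N^{(n)}(j)|\le C_n(1+\log(N/j))^{n+1}$, which does not decay in $j$. It then splits at $j=\varepsilon N$. The range $j\le\varepsilon N$ is handled by integrability of this envelope near $u=0$. The range $j>\varepsilon N$ is handled by uniform Riemann-sum convergence on $[\varepsilon,1]$, plus a bound of order $C_{n,\varepsilon}/N$ on the correction $\sum_{i>j}h_n(i/N)/(i(i-1))$. This gives $o(1)$ with no rate.

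\textbf{Your route.} You use monotonicity of $g(v)=h_n(v)/v$ all the way down to $v=j/N$. The telescoped right-endpoint error $\frac1N\bigl(g(j/N)-g(1)\bigr)\le h_n(j/N)/j$ and the bound $\sum_{i>j}h_n(i/N)/(i(i-1))\le h_n(j/N)/j$ both hold uniformly in $j$. The pointwise error therefore decays like $1/j$. Both sums are then controlled directly by $\sum_j j^{-2}$ and $\sum_j j^{-3/2}$, with explicit rates $O\bigl((1+\log N)^{2n+2}/N\bigr)$ and $O\bigl((1+\log N)^{n+1}/\sqrt N\bigr)$, and no $\varepsilon$-argument.

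\textbf{Trade-off.} Your proof is shorter and quantitative. The paper's proof uses monotonicity only to produce the envelope and otherwise relies on local Riemann-sum convergence, so it adapts more readily to weights $f$ for which $f(v)/v$ is not monotone. For the explicit $h_n$ used here, that generality is not needed.

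\textbf{A minor point.} Your pointwise bound already covers $j=1$, since $g(1/N)/N=h_n(1/N)$ is finite. Moreover, the correction term is nonnegative while the Riemann-sum piece is nonpositive, so you actually get $|e_N^{(n)}(j)|\le h_n(j/N)/j$ for $1\le j\le N-1$. Your separate, cruder treatment of $j=1$ is therefore unnecessary, though harmless.
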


\begin{proof}
Set \(u_j=j/N\). Lemma~\ref{lem:logarithmic-weights} and the definition of
\(\mathsf T_N\) in \eqref{eq:TN-def}
give the pointwise bound
\begin{equation}\label{eq:TN-error-pointwise-bound}
|e_N^{(n)}(j)|
\le
C_n(1+\log(N/j))^{n+1}.
\end{equation}
Indeed, for \(i\ge2\),
\[
\frac{h_n(i/N)}{i-1}
\le
2\frac{h_n(i/N)}i
\le
2\int_{i-1}^i
\frac{h_n(x/N)}x\,\dd x,
\]
because \(x\mapsto x^{-1}h_n(x/N)\) is nonincreasing. Summing from
\(i=j+1\) to \(N\) gives
\[
0\le (\mathsf T_N h_n)(j/N)
\le
2\int_j^N\frac{h_n(x/N)}x\,\dd x
=2h_{n+1}(j/N).
\]
Together with
\(|e_N^{(n)}(j)|\le (\mathsf T_N h_n)(j/N)+h_{n+1}(j/N)\), this proves
\eqref{eq:TN-error-pointwise-bound}.
This bound is square-summable at the logarithmic scale. Hence the contribution
of \(j\le\varepsilon N\) to \(N^{-1}\sum_j |e_N^{(n)}(j)|^2\) is uniformly
small as \(\varepsilon\downarrow0\). On \(j>\varepsilon N\), write
\[
(\mathsf T_N h_n)(j/N)
=
\sum_{i=j+1}^N\frac{h_n(i/N)}i
+r_{N,j},
\qquad
r_{N,j}
:=
\sum_{i=j+1}^N\frac{h_n(i/N)}{i(i-1)}.
\]
The first term is the right-endpoint Riemann sum for
\[
\int_{j/N}^1\frac{h_n(v)}v\,\dd v
=
h_{n+1}(j/N).
\]
The integrand is smooth and bounded on \([\varepsilon,1]\), so this convergence
is uniform for \(j>\varepsilon N\). Moreover,
\[
\sup_{j>\varepsilon N}|r_{N,j}|
\le
C_{n,\varepsilon}
\sum_{i>\varepsilon N}\frac1{i(i-1)}
\le \frac{C_{n,\varepsilon}}N.
\]
Combining the small-\(j\) and large-\(j\) ranges proves the first convergence.

For the predecessor-weighted \(L^1\) convergence, use
\eqref{eq:TN-error-pointwise-bound}. For \(j\le\varepsilon N\), by
\eqref{eq:TN-error-pointwise-bound},
\[
\frac1{\sqrt N}\sum_{j\le\varepsilon N}\frac{|e_N^{(n)}(j)|}{\sqrt j}
\le
C_n\int_0^\varepsilon u^{-1/2}(1+\log(1/u))^{n+1}\,\dd u+o(1),
\]
which is small as \(\varepsilon\downarrow0\). For \(j>\varepsilon N\), the
uniform convergence of \((\mathsf T_N h_n)(j/N)\) to
\(h_{n+1}(j/N)\) on
\([\varepsilon,1]\) gives
\[
\sup_{j>\varepsilon N}|e_N^{(n)}(j)|\to0.
\]
Hence
\[
\frac1{\sqrt N}\sum_{j>\varepsilon N}\frac{|e_N^{(n)}(j)|}{\sqrt j}
\le
\sup_{j>\varepsilon N}|e_N^{(n)}(j)|
\frac1{\sqrt N}\sum_{j>\varepsilon N}j^{-1/2}
\to0.
\]
Combining the two ranges proves the claim.
\end{proof}

The product weights in the quadratic variations satisfy the following bounds.

\begin{lemma}\label{lem:product-weight-bounds}
For every fixed \(n,k\ge0\),
\[
\frac1N\sum_{i=1}^N |h_n(i/N)h_k(i/N)|^2\le C_{n,k},
\]
and
\[
\frac1N
\left(
|h_n(1/N)h_k(1/N)|
+
\sum_{i=2}^N
\frac{|h_n(i/N)h_k(i/N)|}{\sqrt{i-1}}
\right)
\to0.
\]
\end{lemma}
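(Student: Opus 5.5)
The plan is to reduce both bounds to the explicit form \(h_nh_k(u)=s^{n+k}/(n!\,k!)\) with \(s=\log(1/u)\), exactly as in Lemmas~\ref{lem:logarithmic-weights} and \ref{lem:elementary-weights}, and then compare sums with integrals or use crude logarithmic bounds. Throughout we set \(p:=n+k\) and write \(g(u):=h_n(u)h_k(u)=\log(1/u)^{p}/(n!\,k!)\). This function is nonnegative and nonincreasing on \((0,1]\).

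For the first bound, \(|h_nh_k|^2=g^2\) is nonincreasing. We therefore use the right-endpoint comparison
\[
\frac1N g(i/N)^2\le\int_{(i-1)/N}^{i/N}g(u)^2\,\dd u ,
\]
as in the proof of Lemma~\ref{lem:riemann-weights}. Summing over \(1\le i\le N\) gives
\[
\frac1N\sum_{i=1}^N g(i/N)^2\le\int_0^1 g^2\,\dd u=\frac{1}{(n!\,k!)^2}\int_0^\infty s^{2p}e^{-s}\,\dd s=\frac{(2p)!}{(n!\,k!)^2}=:C_{n,k}.
\]

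For the second statement we treat the two pieces separately. The boundary term is
\[
\frac1N\,g(1/N)=\frac{(\log N)^{p}}{n!\,k!\,N}\to0.
\]
For the sum we would not even need Riemann comparison: since \(g(i/N)\le C_{n,k}(1+\log N)^{p}\) for \(1\le i\le N\),
\[
\frac1N\sum_{i=2}^N\frac{g(i/N)}{\sqrt{i-1}}\le\frac{C_{n,k}(1+\log N)^{p}}{N}\sum_{i=2}^N(i-1)^{-1/2}\le\frac{2C_{n,k}(1+\log N)^{p}}{\sqrt N}\to0.
\]
This is the same crude polylogarithmic-versus-power argument as in Lemma~\ref{lem:polylog-summability}.

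No step is a genuine obstacle. The only point requiring care is to use monotonicity of \(g^2\) in the first bound, so that the near-zero singularity \(\log(1/u)^{2p}\) is dominated by its integral, rather than using a sup bound, which would give a \(\log N\) growth instead of a uniform constant.
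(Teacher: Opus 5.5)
Your proposal is correct and follows essentially the same route as the paper. For the first bound you use the right-endpoint comparison for the nonincreasing function \(|h_nh_k|^2\), which yields the constant \((2n+2k)!/((n!)^2(k!)^2)\). For the second bound you use the crude estimate \(h_nh_k(i/N)\le C_{n,k}(1+\log N)^{n+k}\) against the factors \(N^{-1}\) and \(N^{-1/2}\).
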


\begin{proof}
The function \(u\mapsto |h_n(u)h_k(u)|^2\) is nonincreasing. Hence, by the same
right-endpoint argument as in the proof of
Lemma~\ref{lem:riemann-weights},
\[
\frac1N\sum_{i=1}^N |h_n(i/N)h_k(i/N)|^2
\le
\int_0^1 |h_n(u)h_k(u)|^2\,\dd u
=
\frac{(2n+2k)!}{(n!)^2(k!)^2}.
\]
For the second bound, use
\(
|h_n(u)h_k(u)|
\le C_{n,k}(1+\log(1/u))^{n+k}
\)
to obtain
\[
\frac{|h_n(1/N)h_k(1/N)|}{N}
\le
\frac{C_{n,k}(1+\log N)^{n+k}}{N}\to0,
\]
and
\[
\frac1N\sum_{i=2}^N
\frac{|h_n(i/N)h_k(i/N)|}{\sqrt{i-1}}
\le
C_{n,k}(1+\log N)^{n+k}N^{-1/2}\to0.
\]
\end{proof}

\section{Tightness of the countable logarithmic hierarchy}\label{sec:weighted-tightness}

In this section, we prove tightness of the countable logarithmic hierarchy. We
first establish a uniform negative-Sobolev energy bound for each \(Y^{N,n}\),
including the sequential interaction error. We then combine fixed-time
tightness with time equicontinuity to obtain tightness of each coordinate and
of the full countable family.

Throughout this section, \(\beta_\ast\) satisfies
\[
\frac d2+2<\beta_\ast<\beta-2.
\]
The energy estimates are carried out in \(H^{-\beta_\ast}\), while tightness
is proved in \(E=C([0,T];H^{-\beta,-m})\).

The notation \(Y^{N,n}=Y^N(h_n)\), \(G_{\beta_\ast}\), and \(\Delta^i\) is
as in Sections~\ref{subsec:notation} and
\ref{subsec:entropy-conditional}. For \(n\ge0\), set
\[
\Phi_t^{N,n}:=G_{\beta_\ast}*Y_t^{N,n}.
\]
The particle equations become
\[
\dd X_t^i=
\bigl(b(t,X_t^i)+(K*\bar\rho_t)(X_t^i)+\Delta_t^i\bigr)\,\dd t+\dd B_t^i,
\qquad i\ge1.
\]

\begin{proposition}\label{prop:weighted-energy-identity}
For every \(N\ge1\) and \(n\ge0\), the following identity holds almost surely
for every \(t\in[0,T]\):
\begin{align}
\|Y_t^{N,n}\|_{H^{-\beta_\ast}}^2
&=
\|Y_0^{N,n}\|_{H^{-\beta_\ast}}^2
+
c_{\beta_\ast}\left(\frac1N\sum_{i=1}^N h_n(i/N)^2\right)t\notag\\
&\quad+
2\int_0^t
\left\langle Y_s^{N,n},
\frac12\Delta\Phi_s^{N,n}+
\bigl(b(s,\cdot)+K*\bar\rho_s\bigr)\cdot\nabla\Phi_s^{N,n}
\right\rangle\,\dd s\notag\\
&\quad+
\frac2{\sqrt N}\sum_{i=1}^N h_n(i/N)
\int_0^t
\nabla\Phi_s^{N,n}(X_s^i)\cdot\Delta_s^i\,\dd s\notag\\
&\quad+
\mathfrak M_t^{N,n},
\label{eq:weighted-energy-identity}
\end{align}
where \(c_{\beta_\ast}:=-\Delta G_{\beta_\ast}(0)\ge0\), and
\begin{equation*}
\mathfrak M_t^{N,n}
=
\frac2{\sqrt N}
\sum_{i=1}^N h_n(i/N)
\int_0^t
\nabla\Phi_s^{N,n}(X_s^i)\cdot\dd B_s^i.
\end{equation*}
The process \(\mathfrak M^{N,n}\) is a real-valued continuous square-integrable
martingale.
\end{proposition}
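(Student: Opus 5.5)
The plan is to write the squared norm as a finite double sum over particles and apply It\^o's formula to it. Since $G_{\beta_\ast}$ is the kernel of $(I-\Delta)^{-\beta_\ast}$ with $\beta_\ast>d/2+2$, it is $C^4$ (indeed $C_b^{2\beta_\ast-d-\epsilon}$) with bounded derivatives up to order four. Writing $w_i:=h_n(i/N)/\sqrt N$, we have
\[
\|Y_t^{N,n}\|_{H^{-\beta_\ast}}^2=\sum_{i,j}w_iw_jG(X_t^i-X_t^j)-2\sum_i w_i\sum_j w_j\,(G*\bar\rho_t)(X_t^i)\cdot\mathbf 1+\Bigl(\sum_i w_i\Bigr)^2\langle\bar\rho_t,G*\bar\rho_t\rangle .
\]
It is cleaner to organize this as $\langle Y_t,\Phi_t\rangle$, where $\Phi_t=G*Y_t=\sum_iw_i\bigl(G(\cdot-X_t^i)-G*\bar\rho_t\bigr)$. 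The deterministic part involves $\bar\rho_t$, which solves the nonlinear Fokker--Planck equation $\partial_t\bar\rho=L_t^*\bar\rho$ in the weak sense against $C_b^2$ test functions. This follows from It\^o's formula applied to $\bar X$ in \eqref{eq:limit-sde} and is justified because $b,K$ are bounded and smooth.

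\emph{Step 1: drift and martingale terms.} Apply the multidimensional It\^o formula to $F(t,X^1_t,\dots,X^N_t)=\langle Y_t,G*Y_t\rangle$, using the particle dynamics in the form $\dd X^i=(b+K*\bar\rho_t+\Delta^i_t)\dd t+\dd B^i$. The first-order spatial derivative in $x_i$ is $2w_i\nabla\Phi_t(X_t^i)$, because $G$ is symmetric and each particle appears in both factors. The time derivative through $\bar\rho_t$ contributes $-2\sum_iw_i\langle\partial_t\bar\rho_t,\Phi_t\rangle$ type terms. Combined with the drift $b+K*\bar\rho$, these assemble into $2\langle Y_t,L_t\Phi_t\rangle$, where the $\bar\rho_t$ part of $Y_t$ pairs with $L_t\Phi_t$ exactly via the Fokker--Planck equation. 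The $\Delta^i$ drift gives the fourth term of \eqref{eq:weighted-energy-identity}, and the Brownian part gives $\mathfrak M^{N,n}$.

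\emph{Step 2: second-order terms.} The Brownian motions are independent, so only diagonal second derivatives $\partial_{x_i}^2$ contribute. Applying $\frac12\Delta_{x_i}$ to $w_i^2G(0)$-type self terms gives nothing, since that term is constant. The cross terms $2w_iw_jG(x_i-x_j)$ and the $\bar\rho$ terms yield $w_i\Delta\Phi_t(X_t^i)$, except for the self-interaction piece. Precisely, $\frac12\Delta_{x_i}$ of the whole function equals $w_i(\Delta\Phi_t)(X_t^i)-w_i^2\Delta G(0)$. The first part gives the $\frac12\Delta\Phi$ contribution in $2\langle Y,L\Phi\rangle$, combined with the $\bar\rho$ part in Step 1. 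The correction $-w_i^2\Delta G(0)=c_{\beta_\ast}h_n(i/N)^2/N$, summed over $i$ and integrated in time, gives the term $c_{\beta_\ast}(\frac1N\sum h_n^2)t$. Also $c_{\beta_\ast}=-\Delta G(0)=\int|\xi|^2(1+|\xi|^2)^{-\beta_\ast}\dd\xi\ge0$ (up to normalization), which is finite because $\beta_\ast>d/2+1$.

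\emph{Step 3: martingale property.} $\nabla\Phi_s$ is bounded uniformly by $C\sum_i|w_i|$, a deterministic finite constant for fixed $N$. Hence each stochastic integral has a bounded integrand, and $\mathfrak M^{N,n}$ is a continuous square-integrable martingale. The identity holds a.s. for all $t$ simultaneously because both sides are continuous.

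The main obstacle is careful bookkeeping in Step 2. The diagonal $i=j$ terms must be separated so the constant $c_{\beta_\ast}$ appears correctly, and the pairing of the $\bar\rho_t$ component against $L_t\Phi_t$ must be matched with the time derivative of $\bar\rho_t$, using enough regularity of $G$ (bounded fourth derivatives) to justify the Fokker--Planck step.
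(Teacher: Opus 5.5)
Your proof is correct, and it takes a different route from the paper. You apply the classical finite-dimensional It\^o formula to the explicit function \(F(t,x_1,\dots,x_N)=\langle Y_t,G_{\beta_\ast}*Y_t\rangle\) of the particle positions. You handle the time dependence through \(\bar\rho_t\) by hand, via the Fokker--Planck equation. The constant \(c_{\beta_\ast}\) then appears as the diagonal self-interaction correction \(-w_i^2\Delta G_{\beta_\ast}(0)\). Your bookkeeping checks out: \(\nabla_{x_i}F=2w_i\nabla\Phi_t(x_i)\), \(\tfrac12\Delta_{x_i}F=w_i\Delta\Phi_t(x_i)-w_i^2\Delta G_{\beta_\ast}(0)\), and \(\partial_tF=-2S\langle\bar\rho_t,L_t\Phi_t\rangle\) with \(S=\sum_iw_i\).

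The paper instead rewrites the weak equation for \(Y^{N,n}\) as an \(H^{-\beta_\ast}\)-valued semimartingale \(\dd Y=L_t^*Y\,\dd t+\mathcal A\,\dd t+\dd\mathcal N\). This is legitimate because \(\beta_\ast>d/2+2\) gives \(x\mapsto\delta_x\) bounded \(H^{-\beta_\ast}\)-valued derivatives through order two, so the drift lies in \(H^{-\beta_\ast}\). It then applies the Hilbert-space It\^o formula to \(u\mapsto\|u\|_{H^{-\beta_\ast}}^2\). The drift \(2\langle Y,L\Phi\rangle\) comes out in one stroke, with no separate tracking of the \(\bar\rho_t\)-dependence. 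The constant \(c_{\beta_\ast}\) is read off as the Hilbert--Schmidt norm \(\sum_a\|\partial_a\delta_x\|_{H^{-\beta_\ast}}^2=-\Delta G_{\beta_\ast}(0)\).

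The paper's version is shorter and reuses the weak equation needed later for tightness and identification, at the cost of infinite-dimensional stochastic calculus. Yours is fully elementary and makes the origin of every term transparent, at the cost of explicit bookkeeping in \(t\).

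Two small remarks on your version. First, only \(G_{\beta_\ast}\in C_b^2\) is needed (guaranteed by \(\beta_\ast>d/2+1\)), not bounded fourth derivatives. Second, differentiating the quadratic term \(\langle\bar\rho_t,G_{\beta_\ast}*\bar\rho_t\rangle\) needs a product rule, not just the linear weak Fokker--Planck equation. For instance, apply It\^o's formula to \(G_{\beta_\ast}(\bar X_t-\bar X_t')\) with \(\bar X'\) an independent copy of \(\bar X\). This gives the derivative \(2\langle\bar\rho_t,L_t(G_{\beta_\ast}*\bar\rho_t)\rangle\) that your computation uses.
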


\begin{proof}
This is the weighted version of
\cite[Lemma~4.1]{WangZhaoSequentialEntropy}.  Using the weak Fokker--Planck
equation for \(\bar\rho\) together with It\^o's formula for the particles, we
obtain, for \(\varphi\in C_c^\infty(\R^d)\),
\begin{align}
\dd\langle Y_t^{N,n},\varphi\rangle
&=
\left\langle Y_t^{N,n},
\frac12\Delta\varphi+\bigl(b(t,\cdot)+K*\bar\rho_t\bigr)\cdot\nabla\varphi
\right\rangle\,\dd t\notag\\
&\quad+
\frac1{\sqrt N}\sum_{i=1}^N h_n(i/N)
\nabla\varphi(X_t^i)\cdot\Delta_t^i\,\dd t
+
\dd M_t^{N,n}(\varphi),
\label{eq:weak-YN}
\end{align}
where
\[
M_t^{N,n}(\varphi)=
\frac1{\sqrt N}\sum_{i=1}^N h_n(i/N)
\int_0^t\nabla\varphi(X_s^i)\cdot \dd B_s^i.
\]
Let \(L_t^*\) denote the distributional adjoint of \(L_t\), and set
\[
\mathcal A_t^{N,n}
:=
-\frac1{\sqrt N}\sum_{i=1}^N h_n(i/N)
\nabla\!\cdot\bigl(\Delta_t^i\delta_{X_t^i}\bigr),
\]
\[
\dd\mathcal N_t^{N,n}
:=
-\frac1{\sqrt N}\sum_{i=1}^N h_n(i/N)
\nabla\!\cdot\bigl(\delta_{X_t^i}\,\dd B_t^i\bigr).
\]
Since \(\beta_\ast>d/2+2\), the map \(x\mapsto\delta_x\) has bounded
\(H^{-\beta_\ast}\)-valued derivatives through order two.  Assumption~A also
gives bounded coefficients and \(|\Delta_t^i|\le2\|K\|_\infty\).  Thus the
drift terms are \(H^{-\beta_\ast}\)-integrable and the stochastic integral is
well defined in \(H^{-\beta_\ast}\).  Equation~\eqref{eq:weak-YN} is therefore
equivalent to the following \(H^{-\beta_\ast}\)-valued identity:
\[
\dd Y_t^{N,n}
=L_t^*Y_t^{N,n}\,\dd t
+\mathcal A_t^{N,n}\,\dd t
+\dd\mathcal N_t^{N,n}.
\]
For \(u,v\in H^{-\beta_\ast}\),
\[
(u,v)_{H^{-\beta_\ast}}
=
\langle v,G_{\beta_\ast}*u\rangle.
\]
Applying the Hilbert-space Itô formula
\cite[Theorem~6.1.1]{rockner2015spde} in \(H^{-\beta_\ast}\) to
\(u\mapsto\|u\|_{H^{-\beta_\ast}}^2\), and using
\(\Phi_t^{N,n}=G_{\beta_\ast}*Y_t^{N,n}\), gives
\[
\begin{aligned}
\dd\|Y_t^{N,n}\|_{H^{-\beta_\ast}}^2
={}&
2\bigl\langle Y_t^{N,n},L_t\Phi_t^{N,n}\bigr\rangle\,\dd t
+2\bigl\langle\mathcal A_t^{N,n},\Phi_t^{N,n}\bigr\rangle\,\dd t\\
&+\dd[\mathcal N^{N,n}]_t^{H^{-\beta_\ast}}
+2\bigl(Y_t^{N,n},\dd\mathcal N_t^{N,n}\bigr)_{H^{-\beta_\ast}}.
\end{aligned}
\]
For every \(x\in\R^d\) and \(1\le a\le d\), the definition of the inner
product and translation invariance give
\[
\|\partial_a\delta_x\|_{H^{-\beta_\ast}}^2
=
-\partial_{aa}G_{\beta_\ast}(0).
\]
The independence of the Brownian motions therefore gives
\[
\frac{\dd}{\dd t}[\mathcal N^{N,n}]_t^{H^{-\beta_\ast}}
=
\frac1N\sum_{i=1}^N h_n(i/N)^2
\sum_{a=1}^d\|\partial_a\delta_{X_t^i}\|_{H^{-\beta_\ast}}^2
=
c_{\beta_\ast}\frac1N\sum_{i=1}^N h_n(i/N)^2.
\]
Moreover,
\[
\begin{aligned}
\bigl\langle\mathcal A_t^{N,n},\Phi_t^{N,n}\bigr\rangle
&=
\frac1{\sqrt N}\sum_{i=1}^N h_n(i/N)
\nabla\Phi_t^{N,n}(X_t^i)\cdot\Delta_t^i,\\
2\bigl(Y_t^{N,n},\dd\mathcal N_t^{N,n}\bigr)_{H^{-\beta_\ast}}
&=
2\bigl\langle\dd\mathcal N_t^{N,n},\Phi_t^{N,n}\bigr\rangle\\
&=
\frac2{\sqrt N}\sum_{i=1}^N h_n(i/N)
\nabla\Phi_t^{N,n}(X_t^i)\cdot\dd B_t^i
=\dd\mathfrak M_t^{N,n}.
\end{aligned}
\]
For fixed \(N,n\),
\(\sup_{t\le T}\|Y_t^{N,n}\|_{\mathrm{TV}}
\le 2N^{-1/2}\sum_i|h_n(i/N)|\), and \(\nabla G_{\beta_\ast}\) is bounded.
Hence \(\mathfrak M^{N,n}\) is square-integrable.  Integrating in time proves
\eqref{eq:weighted-energy-identity}.
\end{proof}

We next bound the initial fluctuation and the sequential interaction error in
the energy estimate.

\begin{lemma}\label{lem:weighted-J}
For \(n\ge0\), define
\[
J_T^{N,n}
:=
\int_0^T
\left(
\frac1{\sqrt N}\sum_{i=1}^N |h_n(i/N)|\,|\Delta_t^i|
\right)^2\,\dd t.
\]
With \(\mathcal W_n\) defined in
Lemma~\ref{lem:logarithmic-energy-weight-growth}, there is a constant
\(C<\infty\), independent of \(n\) and \(N\), such that
\[
\sup_N
\left(
\mathbb E J_T^{N,n}
+
\mathbb E\|Y_0^{N,n}\|_{H^{-\beta_\ast}}^2
\right)
\le
C\mathcal W_n.
\]
\end{lemma}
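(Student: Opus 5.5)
The plan is to bound the two quantities separately, reducing each to the deterministic weight integrals collected in \(\mathcal W_n\).

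\emph{Initial term.} At time zero the \(X_0^i\) are i.i.d.\ with law \(\bar\rho_0\). The pairing
\[
\|Y_0^{N,n}\|_{H^{-\beta_\ast}}^2=\frac1N\sum_{i,k}h_n(i/N)h_n(k/N)\iint G_{\beta_\ast}(x-y)(\delta_{X_0^i}-\bar\rho_0)(\dd x)(\delta_{X_0^k}-\bar\rho_0)(\dd y)
\]
has vanishing off-diagonal terms in expectation, by independence and centering. Each diagonal term is at most \(4\|G_{\beta_\ast}\|_\infty<\infty\), which is finite since \(\beta_\ast>d/2\). Hence
\[
\mathbb E\|Y_0^{N,n}\|^2_{H^{-\beta_\ast}}\le \frac CN\sum_{i=1}^N h_n(i/N)^2 \le C\|h_n\|_{L^2(0,1)}^2,
\]
where the last step is the right-endpoint comparison for the nonincreasing function \(h_n^2\), as in Lemma~\ref{lem:riemann-weights}. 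This is bounded by \(C\mathcal W_n\).

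\emph{Interaction term.} Expand the square and apply Cauchy--Schwarz in \(\omega\) to each cross term:
\[
\mathbb E J_T^{N,n}\le \int_0^T\Bigl(\frac1{\sqrt N}\sum_{i=1}^N|h_n(i/N)|\bigl(\mathbb E|\Delta_t^i|^2\bigr)^{1/2}\Bigr)^2\dd t .
\]
This is Minkowski's inequality in \(L^2(\Omega)\). To use the integrated bound of Lemma~\ref{lem:paper1-estimates}(i), apply Minkowski in \(L^2(\dd t\otimes\mathbb P)\) instead, which gives
\[
(\mathbb E J_T^{N,n})^{1/2}\le \frac1{\sqrt N}\sum_{i=1}^N |h_n(i/N)|\Bigl(\mathbb E\!\int_0^T|\Delta_t^i|^2\dd t\Bigr)^{1/2}.
\]

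For \(i=1\), use \(|\Delta^1_t|\le\|K\|_\infty\). This contributes \(C|h_n(1/N)|/\sqrt N\), whose square is bounded by \(C\sup_N h_n(1/N)^2/N\), a term of \(\mathcal W_n\). For \(i\ge2\), Lemma~\ref{lem:paper1-estimates}(i) gives the factor \(C_T^{1/2}(i-1)^{-1/2}\le C\sqrt2\, i^{-1/2}\), so this part is at most
\[
\frac{C}{\sqrt N}\sum_{i=1}^N \frac{h_n(i/N)}{\sqrt i}= C\,\frac1N\sum_{i=1}^N h_n(i/N)(i/N)^{-1/2}\le C\int_0^1 h_n(u)u^{-1/2}\,\dd u .
\]
The final inequality again uses the right-endpoint Riemann comparison, valid because \(u\mapsto h_n(u)u^{-1/2}\) is nonincreasing and nonnegative. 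Squaring and using \((a+b)^2\le 2a^2+2b^2\) yields \(\mathbb E J^{N,n}_T\le C\mathcal W_n\).

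The constant is independent of \(n\) and \(N\), since only \(C_T\), \(\|K\|_\infty\) and \(\|G_{\beta_\ast}\|_\infty\) enter. The one point needing care is the choice of Minkowski over \((t,\omega)\) rather than a pointwise-in-\(t\) bound. Lemma~\ref{lem:paper1-estimates}(i) only controls the time integral of \(\mathbb E|\Delta^i_t|^2\), and this step is exactly what allows that integrated estimate to be used.
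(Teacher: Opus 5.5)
Your proposal is correct and follows the paper's proof essentially step by step. For the initial term you use independence and centering; the paper computes the diagonal contribution exactly as $G_{\beta_\ast}(0)-\iint G_{\beta_\ast}(x-y)\,\bar\rho_0(\dd x)\,\bar\rho_0(\dd y)$ instead of bounding it by $4\|G_{\beta_\ast}\|_\infty$, which changes only the constant. For the interaction term you use Minkowski in $L^2(\dd t\otimes\mathbb P)$, Lemma~\ref{lem:paper1-estimates}(i), and a right-endpoint comparison for the nonincreasing function $h_n(u)u^{-1/2}$, exactly as the paper does; your remark that Minkowski must be applied jointly in $(t,\omega)$ makes explicit what the paper leaves implicit.
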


\begin{proof}
This is the weighted version of
\cite[Lemmas~4.4 and~4.5]{WangZhaoSequentialEntropy}.
Set
\[
a_i:=h_n(i/N),
\qquad
\xi_i:=\delta_{X_0^i}-\bar\rho_0.
\]
By Assumption~A, the \(\xi_i\) are independent centered
\(H^{-\beta_\ast}\)-valued random variables. Hence the Bessel-kernel
representation gives
\[
\begin{aligned}
\mathbb E\|Y_0^{N,n}\|_{H^{-\beta_\ast}}^2
&=
\frac1N\sum_{i=1}^N a_i^2
\left[
G_{\beta_\ast}(0)
-
\iint G_{\beta_\ast}(x-y)\bar\rho_0(\dd x)\bar\rho_0(\dd y)
\right]\\
&\le
G_{\beta_\ast}(0)\frac1N\sum_{i=1}^N h_n(i/N)^2
\le
G_{\beta_\ast}(0)\|h_n\|_{L^2(0,1)}^2.
\end{aligned}
\]
The last inequality follows because \(h_n^2\) is nonincreasing.

Applying Minkowski's inequality and using
\(|\Delta_t^1|\le\|K\|_\infty\) together with
Lemma~\ref{lem:paper1-estimates}\textup{(i)}, we obtain
\[
\bigl(\mathbb E J_T^{N,n}\bigr)^{1/2}
\le
C_T\left[
\frac{h_n(1/N)}{\sqrt N}
+
\frac1{\sqrt N}\sum_{i=2}^N
\frac{h_n(i/N)}{\sqrt{i-1}}
\right].
\]
For \(i\ge2\), monotonicity of \(h_n\) and
\[
\frac1{\sqrt N\sqrt{i-1}}
\le
C\int_{(i-1)/N}^{i/N}u^{-1/2}\,\dd u
\]
imply
\[
\frac{h_n(i/N)}{\sqrt N\sqrt{i-1}}
\le
C\int_{(i-1)/N}^{i/N}h_n(u)u^{-1/2}\,\dd u.
\]
Summing in \(i\) yields
\[
\begin{aligned}
\bigl(\mathbb E J_T^{N,n}\bigr)^{1/2}
&\le
C_T\left[
\frac{h_n(1/N)}{\sqrt N}
+
\int_0^1 h_n(u)u^{-1/2}\,\dd u
\right]\\
&\le C_T\mathcal W_n^{1/2}.
\end{aligned}
\]
Combining the two estimates proves the lemma.
\end{proof}

\begin{proposition}\label{prop:weighted-apriori}
There exists \(C<\infty\), independent of \(n\), such that for every \(n\ge0\),
\begin{equation}\label{eq:weighted-apriori-bound}
\sup_N
\mathbb E\left[
\sup_{t\le T}
\|Y_t^{N,n}\|_{H^{-\beta_\ast}}^2
+
\int_0^T
\|Y_t^{N,n}\|_{H^{-\beta_\ast+1}}^2\,\dd t
\right]
\le C4^n.
\end{equation}
\end{proposition}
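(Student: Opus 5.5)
We start from the energy identity \eqref{eq:weighted-energy-identity} of Proposition~\ref{prop:weighted-energy-identity} and estimate its terms one by one. The aim is a Gronwall inequality whose forcing terms are bounded by $C\mathcal W_n$. Lemma~\ref{lem:logarithmic-energy-weight-growth} then converts $C\mathcal W_n$ into $C4^n$.

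\emph{Deterministic terms.} The Itô correction $c_{\beta_\ast}\bigl(N^{-1}\sum_i h_n(i/N)^2\bigr)t$ is at most $c_{\beta_\ast}T\|h_n\|_{L^2}^2\le C\mathcal W_n$, because $h_n^2$ is nonincreasing. In the transport term, $\langle Y,\tfrac12\Delta\Phi\rangle$ with $\Phi=G_{\beta_\ast}*Y$ equals $-\tfrac12\|\nabla Y\|^2_{H^{-\beta_\ast}}$. Since $\|Y\|_{H^{-\beta_\ast+1}}^2=\|Y\|_{H^{-\beta_\ast}}^2+\|\nabla Y\|_{H^{-\beta_\ast}}^2$, this term supplies the dissipation
\[
-\|Y_s^{N,n}\|_{H^{-\beta_\ast+1}}^2+\|Y_s^{N,n}\|_{H^{-\beta_\ast}}^2.
\]
For the drift part, write $a_s=b(s,\cdot)+K*\bar\rho_s$, which lies in $C_b^\infty$ uniformly in $s$ by Assumption~A. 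We use the standard commutator estimate for $\langle Y,a\cdot\nabla(G_{\beta_\ast}*Y)\rangle$, as in \cite[Lemma~4.1]{WangZhaoSequentialEntropy}. Symmetrizing, we write $a\cdot\nabla$ as its antisymmetric part plus $-\tfrac12\nabla\cdot a$. Then $(I-\Delta)^{-\beta_\ast/2}$ commutes with $a\cdot\nabla$ up to an operator of order $-\beta_\ast$. This gives
\[
2\bigl|\langle Y,a\cdot\nabla\Phi\rangle\bigr|\le C\|Y\|_{H^{-\beta_\ast}}\|Y\|_{H^{-\beta_\ast+1}}\le \tfrac12\|Y\|^2_{H^{-\beta_\ast+1}}+C\|Y\|^2_{H^{-\beta_\ast}}.
\]
Here $C$ depends only on $\beta_\ast,d$ and the norms of $a$, so it is independent of $n$ and $N$. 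This commutator step is the main technical point. If a cleaner route is needed, I would argue via Fourier multipliers: $[\langle D\rangle^{-\beta_\ast},a\cdot\nabla]$ maps $H^{s}\to H^{s+\beta_\ast}$ for smooth bounded $a$.

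\emph{Interaction error.} Since $\beta_\ast-1>d/2+1$, Sobolev embedding gives $\sup_x|\nabla\Phi(x)|\le C\|Y\|_{H^{-\beta_\ast}}$. Hence the interaction term is bounded by
\[
2\int_0^t C\|Y_s\|_{H^{-\beta_\ast}}\Bigl(N^{-1/2}\sum_i|h_n(i/N)||\Delta_s^i|\Bigr)\dd s\le \int_0^t\|Y_s\|_{H^{-\beta_\ast}}^2\dd s+C J_T^{N,n}.
\]
Lemma~\ref{lem:weighted-J} controls $\mathbb E J_T^{N,n}$ and the initial term $\mathbb E\|Y_0^{N,n}\|_{H^{-\beta_\ast}}^2$, both by $C\mathcal W_n$.

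\emph{Martingale term.} Taking expectations, the martingale term drops out, which first yields a bound on $\mathbb E\int_0^T\|Y\|_{H^{-\beta_\ast+1}}^2\,\dd s$. We also get $\mathbb E\|Y_t\|_{H^{-\beta_\ast}}^2\le C\mathcal W_n+C\int_0^t\mathbb E\|Y_s\|^2_{H^{-\beta_\ast}}\,\dd s$, and Gronwall gives $\sup_t\mathbb E\|Y_t\|^2\le C\mathcal W_n$. For the supremum inside the expectation we use Burkholder--Davis--Gundy:
\[
\mathbb E\sup_{t\le T}|\mathfrak M_t^{N,n}|\le C\,\mathbb E\Bigl(\int_0^T \tfrac1N\sum_i h_n(i/N)^2\,|\nabla\Phi_s(X_s^i)|^2\,\dd s\Bigr)^{1/2}.
\]
Using again $|\nabla\Phi_s(X_s^i)|\le C\|Y_s\|_{H^{-\beta_\ast}}$, the right-hand side is at most
\[
C\|h_n\|_{L^2}\,\mathbb E\Bigl(\sup_s\|Y_s\|^2\Bigr)^{1/2}\le \tfrac12\mathbb E\sup_s\|Y_s\|^2+C\|h_n\|_{L^2}^2.
\]
The quantity $\mathbb E\sup_s\|Y_s\|^2$ is finite for fixed $N,n$ by the total-variation bound noted in the proof of Proposition~\ref{prop:weighted-energy-identity}. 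We can therefore absorb $\tfrac12\mathbb E\sup_s\|Y_s\|^2$ into the left-hand side. Combining everything, every constant is uniform in $n$ and $N$ and multiplies $\mathcal W_n$. Lemma~\ref{lem:logarithmic-energy-weight-growth} then yields \eqref{eq:weighted-apriori-bound}.
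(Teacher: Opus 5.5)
Your proposal is correct and follows essentially the paper's own argument. Both start from the energy identity, use the Laplacian for dissipation, apply Young's inequality to the transport and interaction terms, control $\mathfrak M^{N,n}$ by BDG, close with Gronwall, and use Lemmas~\ref{lem:weighted-J} and~\ref{lem:logarithmic-energy-weight-growth} to reduce everything to $C\mathcal W_n\le C4^n$.

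The differences are only technical:
\begin{itemize}
\item The commutator step you flag as the main difficulty is unnecessary. Plain duality with the multiplier bound already gives $\|V_s\cdot\nabla(G_{\beta_\ast}*\eta)\|_{H^{\beta_\ast}}\le C\|\eta\|_{H^{-\beta_\ast+1}}$, which is exactly the estimate you use.
\item The paper absorbs the BDG term, after a pathwise Gronwall, into the dissipation $\mathbb E\int_0^T\|Y_t^{N,n}\|_{H^{-\beta_\ast+1}}^2\,\dd t$. You instead absorb it into $\mathbb E\sup_t\|Y_t^{N,n}\|_{H^{-\beta_\ast}}^2$, via your sharper and valid bound $\|\nabla\Phi_s^{N,n}\|_\infty\le C\|Y_s^{N,n}\|_{H^{-\beta_\ast}}$.
\end{itemize}
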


\begin{proof}
This is the weighted version of
\cite[Lemmas~4.2, 4.3, and~4.6]{WangZhaoSequentialEntropy}.
Fix \(n\ge0\). All constants below are independent of \(N\) and \(n\). Write
\[
A_{N,n}:=\frac1N\sum_{i=1}^N h_n(i/N)^2.
\]
For \(0\le t\le T\), set
\[
\begin{aligned}
Z_t^{N,n}&:=\sup_{r\le t}\|Y_r^{N,n}\|_{H^{-\beta_\ast}}^2,
&
D_t^{N,n}&:=\int_0^t
\|Y_s^{N,n}\|_{H^{-\beta_\ast+1}}^2\,\dd s,\\
J_t^{N,n}&:=\int_0^t
\left(
\frac1{\sqrt N}\sum_{i=1}^N |h_n(i/N)|\,|\Delta_s^i|
\right)^2\,\dd s,
&
\mathfrak M_t^{N,n,*}&:=\sup_{r\le t}|\mathfrak M_r^{N,n}|.
\end{aligned}
\]
Since \(h_n^2\) is nonincreasing,
\[
A_{N,n}\le\int_0^1h_n(u)^2\,\dd u\le\mathcal W_n.
\]

{
Write \(V_s=b(s,\cdot)+K*\bar\rho_s\). Since \(b,K\in C_b^\infty\), we have
\(V_s\in C_b^\infty(\R^d;\R^d)\), and all its spatial derivatives are bounded
uniformly in \(s\in[0,T]\). The Sobolev multiplier estimate therefore gives,
for every \(r\ge0\),
\begin{equation}\label{eq:uniform-V-multiplier-bound}
\sup_{s\le T}\|V_s f\|_{H^\sigma}
\le C_r\|f\|_{H^\sigma},
\qquad |\sigma|\le r.
\end{equation}
}
The diffusion term satisfies
\[
2\left\langle \eta,\frac12\Delta(G_{\beta_\ast}*\eta)\right\rangle
=
-\|\eta\|_{H^{-\beta_\ast+1}}^2
+\|\eta\|_{H^{-\beta_\ast}}^2.
\]
{By \eqref{eq:uniform-V-multiplier-bound} and Bessel-potential
duality,}
\[
\|V_s\cdot\nabla(G_{\beta_\ast}*\eta)\|_{H^{\beta_\ast}}
\le C\|\eta\|_{H^{-\beta_\ast+1}}.
\]
Hence, by duality and Young's inequality,
\[
2\left|\left\langle \eta,
V_s\cdot\nabla(G_{\beta_\ast}*\eta)
\right\rangle\right|
\le
\frac14\|\eta\|_{H^{-\beta_\ast+1}}^2
+C\|\eta\|_{H^{-\beta_\ast}}^2.
\]
Since \(\beta_\ast>d/2+2\), Sobolev embedding also yields
\[
\|\nabla\Phi_s^{N,n}\|_\infty
\le C\|Y_s^{N,n}\|_{H^{-\beta_\ast+1}}.
\]
Consequently,
\[
\begin{aligned}
&\left|
\frac2{\sqrt N}\sum_{i=1}^N h_n(i/N)
\nabla\Phi_s^{N,n}(X_s^i)\cdot\Delta_s^i
\right|\\
&\qquad\le
\frac14\|Y_s^{N,n}\|_{H^{-\beta_\ast+1}}^2
+C\left(
\frac1{\sqrt N}\sum_{i=1}^N |h_n(i/N)|\,|\Delta_s^i|
\right)^2.
\end{aligned}
\]
Combining these estimates with \eqref{eq:weighted-energy-identity} gives,
almost surely for every \(t\le T\),
\[
\begin{aligned}
\|Y_t^{N,n}\|_{H^{-\beta_\ast}}^2
+\frac12D_t^{N,n}
\le{}&
\|Y_0^{N,n}\|_{H^{-\beta_\ast}}^2
+C A_{N,n}t
+C\int_0^t\|Y_s^{N,n}\|_{H^{-\beta_\ast}}^2\,\dd s\\
&+C J_t^{N,n}+\mathfrak M_t^{N,n}.
\end{aligned}
\]

Dropping the dissipation term, taking the supremum over \(r\le t\), and using
\(J_r^{N,n}\le J_t^{N,n}\), we obtain
\[
Z_t^{N,n}
\le
\|Y_0^{N,n}\|_{H^{-\beta_\ast}}^2
+C_T A_{N,n}
+CJ_t^{N,n}
+\mathfrak M_t^{N,n,*}
+C\int_0^tZ_s^{N,n}\,\dd s.
\]
Gronwall's inequality therefore gives
\[
Z_T^{N,n}
\le
C_T\left(
\|Y_0^{N,n}\|_{H^{-\beta_\ast}}^2
+A_{N,n}+J_T^{N,n}+\mathfrak M_T^{N,n,*}
\right).
\]
Returning to the previous energy inequality at \(t=T\), and using
\(\int_0^T\|Y_s^{N,n}\|_{H^{-\beta_\ast}}^2\,\dd s\le TZ_T^{N,n}\),
we obtain
\[
Z_T^{N,n}+D_T^{N,n}
\le
C_T\left(
\|Y_0^{N,n}\|_{H^{-\beta_\ast}}^2
+A_{N,n}+J_T^{N,n}+\mathfrak M_T^{N,n,*}
\right).
\]

The quadratic variation of \(\mathfrak M^{N,n}\) satisfies
\[
\langle\mathfrak M^{N,n}\rangle_T
\le
C A_{N,n}D_T^{N,n}.
\]
Thus the BDG inequality \cite{da2014stochastic}, followed by Young's
inequality, gives, for every \(\varepsilon>0\),
\[
\mathbb E\mathfrak M_T^{N,n,*}
\le
C A_{N,n}^{1/2}\mathbb E\bigl[(D_T^{N,n})^{1/2}\bigr]
\le
\varepsilon\mathbb E D_T^{N,n}+C_\varepsilon A_{N,n}.
\]
Taking expectations in the previous inequality and choosing \(\varepsilon\)
small enough to absorb the dissipation term, Lemma~\ref{lem:weighted-J} and
the bound \(A_{N,n}\le\mathcal W_n\) yield
\[
\sup_N\mathbb E\left[Z_T^{N,n}+D_T^{N,n}\right]
\le C\mathcal W_n.
\]
Lemma~\ref{lem:logarithmic-energy-weight-growth} gives
\(\mathcal W_n\le C4^n\), which proves the proposition.
\end{proof}

For \(n\ge0\), let \(\mathcal M^{N,n}\) denote the
\(H^{-\beta,-m}\)-valued martingale defined by
\begin{equation}\label{eq:tightness-martingale-definition}
\langle \mathcal M_t^{N,n},\varphi\rangle
=
\frac1{\sqrt N}\sum_{i=1}^N h_n(i/N)
\int_0^t\nabla\varphi(X_r^i)\cdot\dd B_r^i ,
\end{equation}
for \(\varphi\in C_c^\infty(\R^d)\).

\begin{proposition}\label{prop:weighted-tightness}
For each fixed \(n\ge0\),
the laws of the pairs \((Y^{N,n},\mathcal M^{N,n})_{N\ge1}\) are tight in
\(E\times E\). The full hierarchy and martingale family
\[
\bigl((Y^{N,n})_{n\ge0},(\mathcal M^{N,n})_{n\ge0}\bigr)
\]
are therefore tight in \(E^{\mathbb N_0}\times E^{\mathbb N_0}\).
\end{proposition}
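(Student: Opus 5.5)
The plan is to verify Aldous-type or Kolmogorov-type tightness for each fixed \(n\) in \(E=C([0,T];H^{-\beta,-m})\). Two ingredients are needed: compact containment at fixed times, and a uniform modulus of continuity in time. Tightness of the full family in \(E^{\mathbb N_0}\times E^{\mathbb N_0}\) is then automatic. Indeed, a countable product of tight families is tight in the product topology: given \(\varepsilon\), choose compacts \(K_n\) with mass at least \(1-\varepsilon 2^{-n-1}\) for the \(n\)-th coordinate pair. Then \(\prod_n K_n\) is compact by Tychonoff and has mass at least \(1-\varepsilon\). So everything reduces to fixed \(n\), and the constants may depend on \(n\).

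For compact containment, Proposition~\ref{prop:weighted-apriori} gives \(\sup_N\mathbb E\sup_{t\le T}\|Y_t^{N,n}\|_{H^{-\beta_\ast}}^2\le C4^n\). Since \(\beta_\ast<\beta\) and \(m>0\), the embedding \eqref{eq:weighted-rellich-embedding} \(H^{-\beta_\ast}\hookrightarrow H^{-\beta,-m}\) is compact, so balls of \(H^{-\beta_\ast}\) are relatively compact in \(H^{-\beta,-m}\). Chebyshev's inequality gives uniform containment of all \(Y_t^{N,n}\), \(t\le T\), in a compact set with high probability.

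For \(\mathcal M^{N,n}\), the same argument works. Its \(H^{-\beta_\ast}\)-valued quadratic variation is \(c\,A_{N,n}t\), as computed in Proposition~\ref{prop:weighted-energy-identity}, with \(A_{N,n}\le\mathcal W_n\). Doob's or BDG's inequality then bounds \(\mathbb E\sup_t\|\mathcal M_t^{N,n}\|_{H^{-\beta_\ast}}^2\).

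For equicontinuity, use the \(H^{-\beta,-m}\)-valued form of \eqref{eq:weak-YN}:
\[
Y_t^{N,n}-Y_s^{N,n}=\int_s^t L_r^*Y_r^{N,n}\,\dd r+\int_s^t\mathcal A_r^{N,n}\,\dd r+\mathcal M_t^{N,n}-\mathcal M_s^{N,n}.
\]
The three pieces are estimated as follows.
\begin{itemize}
\item \emph{Transport-diffusion term.} \(L_r^*\) maps \(H^{-\beta_\ast}\) to \(H^{-\beta_\ast-2}\subset H^{-\beta,-m}\) boundedly, because \(\beta>\beta_\ast+2\) and the coefficients are multipliers by \eqref{eq:uniform-V-multiplier-bound}. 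Hence this term is bounded by \(C|t-s|\sup_r\|Y_r^{N,n}\|_{H^{-\beta_\ast}}\).
\item \emph{Interaction error.} By \eqref{eq:delta-gradient-sobolev-bound}, \(\|\nabla\delta_x\|_{H^{-\beta,-m}}\) is bounded, so \(\|\mathcal A_r^{N,n}\|_{H^{-\beta,-m}}\le CN^{-1/2}\sum_i|h_n(i/N)||\Delta_r^i|\). Cauchy--Schwarz in time then gives a bound by \(|t-s|^{1/2}(J_T^{N,n})^{1/2}\), and Lemma~\ref{lem:weighted-J} controls \(\mathbb EJ_T^{N,n}\le C\mathcal W_n\).
\item \emph{Martingale increments.} The quadratic variation rate in \(H^{-\beta,-m}\) is at most \(C A_{N,n}\), deterministic and bounded. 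BDG with a higher moment gives \(\mathbb E\|\mathcal M_t^{N,n}-\mathcal M_s^{N,n}\|^4\le C|t-s|^2\), so Kolmogorov's criterion yields a uniform Hölder modulus.
\end{itemize}
Combining these, for \(\delta>0\),
\[
\mathbb E\sup_{|t-s|\le\delta}\|Y_t^{N,n}-Y_s^{N,n}\|_{H^{-\beta,-m}}\le C_n\bigl(\delta+\delta^{1/2}+\omega(\delta)\bigr),
\]
uniformly in \(N\), with \(\omega(\delta)\to0\). Arzelà--Ascoli in \(E\) then gives tightness of each coordinate, and joint tightness of the pair follows from marginal tightness.

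The main technical point is making sure the infinite-dimensional stochastic integral and the compactness argument take place in the right spaces. The energy bound lives in the unweighted \(H^{-\beta_\ast}\), but compactness requires the weight \(m\) and the loss \(\beta-\beta_\ast\). I therefore check that the two derivatives lost through \(L^*\) are absorbed by \(\beta-\beta_\ast>2\). I also check that \(\delta_x\) and \(\nabla\delta_x\) are uniformly bounded in \(H^{-\beta,-m}\), using \(\beta>d/2+1\), so that the martingale's Hilbert--Schmidt norm is controlled uniformly in \(N\).
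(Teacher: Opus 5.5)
Your proposal is correct and follows essentially the same route as the paper. You use \(H^{-\beta_\ast}\) moment bounds and the compact embedding into \(H^{-\beta,-m}\) for containment, BDG plus Kolmogorov for the martingale increments, Proposition~\ref{prop:weighted-apriori} together with Lemma~\ref{lem:weighted-J} for the drift, and the \(\varepsilon 2^{-n-1}\) product-of-compacts argument for the countable family. The only cosmetic differences are that you use uniform-in-time containment and an expected-modulus bound, whereas the paper uses fixed-time tightness and an \(L^2\)-in-time drift bound on a high-probability event.
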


\begin{proof}
Fix \(n\ge0\); constants below may depend on \(T\) and \(n\), but not on
\(N\). By the Arzel\`a--Ascoli tightness criterion
\cite{kelley2017topology}, it is enough to prove fixed-time tightness and
uniform equicontinuity in probability.

For fixed-time tightness, since \(\beta_\ast>d/2+1\),
\eqref{eq:delta-gradient-sobolev-bound}, with \(\alpha=\beta_\ast\) and
\(m=0\), gives
\[
\sup_x\|\nabla\delta_x\|_{H^{-\beta_\ast}}<\infty.
\]
Thus the stochastic integral in
\eqref{eq:tightness-martingale-definition} defines a continuous
\(H^{-\beta_\ast}\)-valued process. The time integral of the squared
Hilbert--Schmidt norm of its integrand over \([s,t]\) satisfies
\[
\begin{aligned}
&\frac1N\sum_{i=1}^N h_n(i/N)^2
\sum_{a=1}^d\int_s^t
\|\partial_a\delta_{X_r^i}\|_{H^{-\beta_\ast}}^2\,\dd r\\
&\qquad\le
C|t-s|\frac1N\sum_{i=1}^N h_n(i/N)^2
\le C|t-s|\|h_n\|_{L^2(0,1)}^2
\le C_n|t-s|,
\end{aligned}
\]
where the second inequality follows because \(h_n^2\) is nonincreasing.
The Hilbert-valued BDG inequality \cite{da2014stochastic} and
Proposition~\ref{prop:weighted-apriori} therefore give
\[
\sup_N\mathbb E\sup_{t\le T}
\left(
\|Y_t^{N,n}\|_{H^{-\beta_\ast}}^2
+\|\mathcal M_t^{N,n}\|_{H^{-\beta_\ast}}^2
\right)
\le C_n.
\]
By the compact embedding \eqref{eq:weighted-rellich-embedding} and Markov's
inequality, the fixed-time laws of
\((Y_t^{N,n},\mathcal M_t^{N,n})\) are tight in
\(H^{-\beta,-m}\times H^{-\beta,-m}\) for every \(t\in[0,T]\).

For time equicontinuity, the same
Hilbert--Schmidt estimate, the BDG inequality, and the continuous embedding in
\eqref{eq:weighted-rellich-embedding} give, for every \(p>1\),
\begin{equation}\label{eq:hilbert-martingale-modulus-bound}
\mathbb E
\|\mathcal M_t^{N,n}-\mathcal M_s^{N,n}\|_{H^{-\beta,-m}}^{2p}
\le
C_{p,n}|t-s|^p,
\qquad 0\le s<t\le T.
\end{equation}
Kolmogorov's criterion yields uniform equicontinuity in probability of
\((\mathcal M^{N,n})_{N\ge1}\) in \(H^{-\beta,-m}\).

For \(Y^{N,n}\), the weak finite-\(N\) equation in \(H^{-\beta}\) is
\begin{equation}\label{eq:tightness-drift-martingale-decomposition}
Y_t^{N,n}-Y_s^{N,n}
=
\int_s^t \mathcal D_r^{N,n}\,\dd r
+
\mathcal M_t^{N,n}-\mathcal M_s^{N,n},
\end{equation}
where, for \(\varphi\in C_c^\infty(\R^d)\),
\[
\begin{aligned}
\langle \mathcal D_r^{N,n},\varphi\rangle
&=
\left\langle Y_r^{N,n},
\frac12\Delta\varphi
+
\bigl(b(r,\cdot)+K*\bar\rho_r\bigr)\cdot\nabla\varphi
\right\rangle\\
&\quad+
\frac1{\sqrt N}\sum_{i=1}^N h_n(i/N)
\nabla\varphi(X_r^i)\cdot\Delta_r^i.
\end{aligned}
\]
Under Assumption~A, the derivatives of
\(b(r,\cdot)+K*\bar\rho_r\) are uniformly bounded. Since
\(\beta>\beta_\ast+2\), the diffusion and transport parts map
\(H^{-\beta_\ast}\) continuously into \(H^{-\beta}\). In addition,
\eqref{eq:delta-gradient-sobolev-bound}, with \(\alpha=\beta\) and \(m=0\),
bounds the interaction-error term. Hence
\[
\|\mathcal D_r^{N,n}\|_{H^{-\beta}}
\le
C\left(
\|Y_r^{N,n}\|_{H^{-\beta_\ast}}
+\frac1{\sqrt N}\sum_{i=1}^N|h_n(i/N)|\,|\Delta_r^i|
\right).
\]
Consequently, Proposition~\ref{prop:weighted-apriori} and
Lemma~\ref{lem:weighted-J} yield
\begin{equation}\label{eq:tightness-drift-L2-bound}
\sup_N\mathbb E\int_0^T
\|\mathcal D_r^{N,n}\|_{H^{-\beta}}^2\,\dd r
\le C_n.
\end{equation}
For \(R>0\), let
\[
\Omega_R^N
:=
\left\{
\int_0^T
\|\mathcal D_r^{N,n}\|_{H^{-\beta}}^2\,\dd r
\le R
\right\}.
\]
By \eqref{eq:tightness-drift-L2-bound},
\(\sup_N\mathbb P((\Omega_R^N)^c)\le C_n/R\), while on
\(\Omega_R^N\),
\[
\left\|
\int_s^t\mathcal D_r^{N,n}\,\dd r
\right\|_{H^{-\beta}}
\le
R^{1/2}|t-s|^{1/2}.
\]
The continuous embedding \(H^{-\beta}\hookrightarrow H^{-\beta,-m}\), dual
to \(H^{\beta,m}\hookrightarrow H^\beta\), shows that the drift integrals are
uniformly equicontinuous in probability in \(H^{-\beta,-m}\).
Equations~\eqref{eq:hilbert-martingale-modulus-bound} and
\eqref{eq:tightness-drift-martingale-decomposition} then give the same
property for the pairs \((Y^{N,n},\mathcal M^{N,n})_{N\ge1}\). Combined with
fixed-time tightness, this proves tightness in \(E\times E\).

For the countable family, given \(\varepsilon>0\), choose for each \(n\ge0\)
a compact set
\(\mathsf K_n\subset E\times E\) such that
\[
\sup_N\mathbb P\{(Y^{N,n},\mathcal M^{N,n})\notin \mathsf K_n\}
\le \varepsilon 2^{-n-1}.
\]
Then \(\mathsf K:=\prod_{n\ge0}\mathsf K_n\) is compact in
\((E\times E)^{\mathbb N_0}\), and
\[
\sup_N\mathbb P\{((Y^{N,n},\mathcal M^{N,n}))_{n\ge0}\notin \mathsf K\}
\le
\sum_{n\ge0}
\sup_N\mathbb P\{(Y^{N,n},\mathcal M^{N,n})\notin \mathsf K_n\}
\le\varepsilon.
\]
Identifying \((E\times E)^{\mathbb N_0}\) with
\(E^{\mathbb N_0}\times E^{\mathbb N_0}\), this proves the stated
countable-product tightness.
\end{proof}

Since \(E^{\mathbb N_0}\times E^{\mathbb N_0}\) is Polish,
Proposition~\ref{prop:weighted-tightness}, Prokhorov's theorem, and the
classical Skorokhod representation theorem
\cite[Theorem~2.6.2]{hofmanova2018spde} yield the following statement.

\begin{corollary}[Skorokhod representation]
\label{cor:subsequential-skorokhod}
Let \((N_j)_{j\ge1}\) be any sequence with \(N_j\to\infty\). There exist a
further subsequence, still denoted by \((N_j)\), a complete stochastic basis
\((\widetilde\Omega,\widetilde{\mathcal F},
(\widetilde{\mathcal F}_t)_{t\in[0,T]},\widetilde{\mathbb P})\), and
\(E^{\mathbb N_0}\times E^{\mathbb N_0}\)-valued random elements
\[
\left(
(\widetilde Y^{N_j,n})_{n\ge0},
(\widetilde{\mathcal M}^{N_j,n})_{n\ge0}
\right),
\qquad
\left(
(\widetilde Y^n)_{n\ge0},
(\widetilde{\mathcal M}^n)_{n\ge0}
\right),
\]
with the following properties:
\begin{enumerate}
\item[(i)] For every \(j\ge1\),
\[
\Law\left(
(\widetilde Y^{N_j,n})_{n\ge0},
(\widetilde{\mathcal M}^{N_j,n})_{n\ge0}
\right)
=
\Law\left(
(Y^{N_j,n})_{n\ge0},
(\mathcal M^{N_j,n})_{n\ge0}
\right)
\!.
\]

\item[(ii)] We have
\begin{equation}\label{eq:skorokhod-countable-convergence}
\left(
(\widetilde Y^{N_j,n})_{n\ge0},
(\widetilde{\mathcal M}^{N_j,n})_{n\ge0}
\right)
\longrightarrow
\left(
(\widetilde Y^n)_{n\ge0},
(\widetilde{\mathcal M}^n)_{n\ge0}
\right)
\quad \widetilde{\mathbb P}\text{-a.s. in }
E^{\mathbb N_0}\times E^{\mathbb N_0}.
\end{equation}

\item[(iii)] For every \(q\in(0,1/4)\),
\[
\sum_{n=0}^\infty
q^n\widetilde{\mathbb E}\sup_{t\le T}
\|\widetilde Y_t^n\|_{H^{-\beta_\ast}}^2
<\infty.
\]
This follows directly from \eqref{eq:weighted-apriori-bound},
\textup{(i)--(ii)}, Fatou's lemma, and the lower semicontinuity on \(E\) of
\(z\mapsto\sup_{t\le T}\|z_t\|_{H^{-\beta_\ast}}^2\). Together with
continuity in \(E\), the same bound gives weakly continuous paths in
\(H^{-\beta_\ast}\) for every \(\widetilde Y^n\).
\end{enumerate}
\end{corollary}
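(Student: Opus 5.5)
The plan is to combine Proposition~\ref{prop:weighted-tightness} with Prokhorov's theorem and the Skorokhod representation theorem, and then to transfer the uniform energy bound \eqref{eq:weighted-apriori-bound} to the limit. First I would check that \(E=C([0,T];H^{-\beta,-m})\) is Polish, since \(H^{-\beta,-m}\) is a separable Hilbert space, being the dual of the separable Hilbert space \(H^{\beta,m}\). A countable product of Polish spaces is Polish under the metric \(d_E\), so \(\mathcal X:=E^{\mathbb N_0}\times E^{\mathbb N_0}\) is Polish. Given any sequence \(N_j\to\infty\), Proposition~\ref{prop:weighted-tightness} says the laws of \(((Y^{N_j,n})_n,(\mathcal M^{N_j,n})_n)\) are tight on \(\mathcal X\), so Prokhorov's theorem yields a weakly convergent further subsequence. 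The Skorokhod theorem \cite[Theorem~2.6.2]{hofmanova2018spde} then gives a probability space carrying copies with the same laws, which is (i), converging almost surely in \(\mathcal X\), which is (ii). I would complete \(\widetilde{\mathcal F}\) and take \((\widetilde{\mathcal F}_t)\) to be the augmented filtration generated by the limit and prelimit processes up to time \(t\), made right-continuous. This gives a complete stochastic basis; adaptedness is then automatic.

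For (iii), the key observation is that \(\Psi(z):=\sup_{t\le T}\|z_t\|_{H^{-\beta_\ast}}^2\in[0,\infty]\) is lower semicontinuous on \(E\). The reason is that \(\|u\|_{H^{-\beta_\ast}}=\sup\{\langle u,\varphi\rangle:\varphi\in C_c^\infty,\ \|\varphi\|_{H^{\beta_\ast}}\le1\}\), and each map \(z\mapsto\langle z_t,\varphi\rangle\) is continuous on \(E\) because \(C_c^\infty\subset H^{\beta,m}\). So \(\Psi\) is a supremum of continuous functions. Then Fatou's lemma along the almost sure convergence, together with equality of laws, gives
\[
\widetilde{\mathbb E}\,\Psi(\widetilde Y^n)\le\liminf_j\widetilde{\mathbb E}\,\Psi(\widetilde Y^{N_j,n})=\liminf_j\mathbb E\,\Psi(Y^{N_j,n})\le C4^n .
\]
Summing against \(q^n\) with \(q<1/4\) gives a convergent geometric series.

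Finally, for weak continuity in \(H^{-\beta_\ast}\): almost surely \(\sup_t\|\widetilde Y^n_t\|_{H^{-\beta_\ast}}<\infty\), so the path lies in a bounded set of \(H^{-\beta_\ast}\). The path is continuous for \(\langle\cdot,\varphi\rangle\) with \(\varphi\in C_c^\infty\), which is a dense subset of \(H^{\beta_\ast}\). On bounded sets, weak convergence in \(H^{-\beta_\ast}\) is equivalent to convergence tested against a dense family, and a \(3\varepsilon\) argument extends continuity to all of \(H^{\beta_\ast}\). I expect no real obstacle. The only point needing care is the lower semicontinuity and measurability of \(\Psi\) on \(E\); the supremum can be taken over a countable dense family of \(\varphi\) and rational \(t\), which settles measurability.
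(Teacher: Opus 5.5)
Your proposal is correct and follows the paper's route: Polishness of \(E^{\mathbb N_0}\times E^{\mathbb N_0}\), then Proposition~\ref{prop:weighted-tightness} with Prokhorov's theorem and the Skorokhod representation theorem for (i)--(ii), and then, for (iii), Fatou's lemma together with lower semicontinuity of \(z\mapsto\sup_{t\le T}\|z_t\|_{H^{-\beta_\ast}}^2\) and the bound \eqref{eq:weighted-apriori-bound}. You also fill in details the paper leaves implicit, namely lower semicontinuity as a supremum of continuous pairings against \(C_c^\infty\) test functions and weak continuity via boundedness in \(H^{-\beta_\ast}\) plus density, and these details are sound.
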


\section{Finite-particle identities and limits of the linear terms}
\label{sec:identification-inputs}

In this section, we prepare the term-by-term passage to the limit in the
finite-\(N\) martingale equation. We first derive the equation and split its
interaction term into a leading mean-field part and a nonlinear remainder. We
then identify the limit of the leading interaction, compute the limiting
martingale covariations, and establish the separate limits of the initial
fluctuations and martingale coordinates. The nonlinear remainder will be
handled by conditional replacement.

\subsection{Finite-N martingale identity}\label{subsec:martingale-formulation}

In this subsection, we derive the weak finite-particle equation and split its
interaction term into two parts. The first lemma also shows that the boundary
term contributed by the first particle vanishes as \(N\to\infty\).

\begin{lemma}\label{lem:finiteN-mp}
Fix \(n\ge0\) and let \(\varphi\in C_c^\infty(\R^d)\). Set
\[
M_t^{N,n}(\varphi)
:=
\frac1{\sqrt N}\sum_{i=1}^N h_n(i/N)
\int_0^t\nabla\varphi(X_s^i)\cdot\dd B_s^i.
\]
Then
\begin{align}
\langle Y_t^{N,n},\varphi\rangle
&=
\langle Y_0^{N,n},\varphi\rangle
+
\int_0^t
\langle Y_s^{N,n},L_s\varphi\rangle\,\dd s\notag\\
&\quad+
\int_0^t I_s^{N,n}(\varphi)\,\dd s
+
\int_0^t
\mathcal B_s^{N,n}(\varphi)\,\dd s
+
M_t^{N,n}(\varphi),
\label{eq:finiteN-martingale-formulation}
\end{align}
where
\(L_s\varphi=\frac12\Delta\varphi+
\bigl(b(s,\cdot)+K*\bar\rho_s\bigr)\cdot\nabla\varphi\),
\begin{equation}\label{eq:interaction-term}
I_s^{N,n}(\varphi)
=
\frac1{\sqrt N}
\sum_{i=2}^N
h_n(i/N)
\nabla\varphi(X_s^i)\cdot
(K*(\mu_s^{i-1}-\bar\rho_s))(X_s^i),
\end{equation}
\begin{equation}\label{eq:first-particle-error}
\mathcal B_s^{N,n}(\varphi)
=
-\frac{h_n(1/N)}{\sqrt N}
\nabla\varphi(X_s^1)\cdot(K*\bar\rho_s)(X_s^1).
\end{equation}
Moreover, as \(N\to\infty\),
\begin{equation}\label{eq:first-particle-error-vanishes}
\mathbb E\int_0^T |\mathcal B_s^{N,n}(\varphi)|\,\dd s\to0.
\end{equation}
\end{lemma}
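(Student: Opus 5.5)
The plan is to apply Itô's formula to each particle, weight by \(h_n(i/N)\), and subtract the weak Fokker--Planck equation for \(\bar\rho\). This is essentially the computation behind \eqref{eq:weak-YN}, reorganized so that the interaction error \(\Delta_s^i\) is written out explicitly.

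\textbf{Step 1: Itô's formula and the weak Fokker--Planck equation.} For \(\varphi\in C_c^\infty(\R^d)\), Itô's formula for \(X^i\) gives
\[
\dd\varphi(X_t^i)=\Bigl(\tfrac12\Delta\varphi(X_t^i)+\nabla\varphi(X_t^i)\cdot\bigl(b(t,X_t^i)+(K*\bar\rho_t)(X_t^i)+\Delta_t^i\bigr)\Bigr)\dd t+\nabla\varphi(X_t^i)\cdot\dd B_t^i .
\]
The weak McKean--Vlasov equation gives \(\dd\langle\bar\rho_t,\varphi\rangle=\langle\bar\rho_t,L_t\varphi\rangle\,\dd t\). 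Multiply the Itô identity by \(N^{-1/2}h_n(i/N)\), sum over \(i\), and subtract the corresponding weighted multiple of the \(\bar\rho\) equation. The \(L_t\varphi\)-parts combine exactly into \(\langle Y_t^{N,n},L_t\varphi\rangle\), and the Brownian terms are exactly \(M_t^{N,n}(\varphi)\).

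\textbf{Step 2: split the interaction error.} It remains to treat \(N^{-1/2}\sum_i h_n(i/N)\nabla\varphi(X_t^i)\cdot\Delta_t^i\). For \(i\ge2\) we have
\[
\Delta_t^i=(K*(\mu_t^{i-1}-\bar\rho_t))(X_t^i),
\]
so these terms form \(I_t^{N,n}(\varphi)\) as in \eqref{eq:interaction-term}. For \(i=1\) we have \(\Delta_t^1=-(K*\bar\rho_t)(X_t^1)\), which produces exactly \(\mathcal B_t^{N,n}(\varphi)\) in \eqref{eq:first-particle-error}. Integrating in time then gives \eqref{eq:finiteN-martingale-formulation}.

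\textbf{Step 3: the boundary term vanishes.} Use the crude bound
\[
|\mathcal B_s^{N,n}(\varphi)|\le N^{-1/2}h_n(1/N)\,\|\nabla\varphi\|_\infty\|K\|_\infty .
\]
Since \(h_n(1/N)=(\log N)^n/n!\), this gives
\[
\mathbb E\int_0^T|\mathcal B_s^{N,n}(\varphi)|\,\dd s\le T\|\nabla\varphi\|_\infty\|K\|_\infty\,\frac{(\log N)^n}{n!\sqrt N}\to0,
\]
which is \eqref{eq:first-particle-error-vanishes}. This is also contained in Lemma~\ref{lem:elementary-weights}.

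No step is a real obstacle. The only point needing care is bookkeeping. The \(b+K*\bar\rho_t\) drift has to be taken into \(L_t\) for both the particles and \(\bar\rho\), so that the \(\bar\rho\)-subtraction closes into \(\langle Y^{N,n},L\varphi\rangle\). The particle-1 term, which has no predecessors, must be kept separate from \(I^{N,n}\).
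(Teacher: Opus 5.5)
Your proposal is correct and follows essentially the same route as the paper. The paper obtains the identity by substituting \(\Delta_s^i=(K*(\mu_s^{i-1}-\bar\rho_s))(X_s^i)\) for \(i\ge2\) and \(\Delta_s^1=-(K*\bar\rho_s)(X_s^1)\) into the weak equation \eqref{eq:weak-YN}, which it derives from It\^o's formula and the weak Fokker--Planck equation exactly as in your Step~1; it then bounds the boundary term using the boundedness of \(K\) and \(\nabla\varphi\) together with Lemma~\ref{lem:elementary-weights}, which is your explicit \((\log N)^n/(n!\sqrt N)\) bound.
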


\begin{proof}
For \(i\ge2\),
\[
\Delta_s^i=(K*(\mu_s^{i-1}-\bar\rho_s))(X_s^i),
\qquad
\Delta_s^1=-(K*\bar\rho_s)(X_s^1).
\]
Substituting these identities into \eqref{eq:weak-YN} gives
\eqref{eq:finiteN-martingale-formulation}.
The boundary estimate \eqref{eq:first-particle-error-vanishes} follows from
\eqref{eq:first-particle-error}, the boundedness of \(K\) and \(\nabla\varphi\),
and Lemma~\ref{lem:elementary-weights}.
\end{proof}

To separate the part that survives in the limit from the nonlinear remainder,
define
\[
F_\varphi[\gamma](x)
:=
\nabla\varphi(x)\cdot(K*\gamma)(x).
\]
Using this notation, split the interaction term \eqref{eq:interaction-term}
into its mean-field average and a nonlinear remainder:
\[
I_s^{N,n}(\varphi)
=
I_{s,0}^{N,n}(\varphi)
+
\mathcal R_s^{N,n}(\varphi),
\]
where
\begin{equation}\label{eq:leading-interaction-term}
I_{s,0}^{N,n}(\varphi)
=
\frac1{\sqrt N}
\sum_{i=2}^N
h_n(i/N)
\left\langle
\bar\rho_s,
F_\varphi[\mu_s^{i-1}-\bar\rho_s]
\right\rangle,
\end{equation}
and
\begin{equation}\label{eq:nonlinear-remainder}
\mathcal R_s^{N,n}(\varphi)
=
\frac1{\sqrt N}
\sum_{i=2}^N
h_n(i/N)
\left\langle
\delta_{X_s^i}-\bar\rho_s,
F_\varphi[\mu_s^{i-1}-\bar\rho_s]
\right\rangle.
\end{equation}
The first term is linear in the predecessor fluctuation and can be rewritten
exactly. The second is the nonlinear remainder to be removed by conditional
replacement. Reindexing the lower-triangular sum in the first term produces
the weights
\[
(\mathsf T_N h_n)(j/N)
:=
\sum_{i=j+1}^N\frac{h_n(i/N)}{i-1},
\qquad 1\le j\le N-1,
\qquad
(\mathsf T_N h_n)(1):=0.
\]

\begin{lemma}\label{lem:exact-summation}
Fix \(n\ge0\) and let \(\varphi\in C_c^\infty(\R^d)\). Then
\begin{equation}\label{eq:leading-identity}
I_{s,0}^{N,n}(\varphi)
=
\left\langle
\bar\rho_s,
(K*Y_s^N(\mathsf T_N h_n))\cdot\nabla\varphi
\right\rangle.
\end{equation}
\end{lemma}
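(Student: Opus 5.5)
The identity is purely algebraic, so the plan is to expand, interchange the finite sums, and recognize the result. First I will check that both sides of \eqref{eq:leading-identity} are linear in the measure inserted into \(K*\cdot\). For a finite signed measure \(\gamma\), the map
\[
\gamma\mapsto\langle\bar\rho_s,F_\varphi[\gamma]\rangle
=\int\nabla\varphi(x)\cdot\int K(x-y)\,\gamma(\dd y)\,\bar\rho_s(\dd x)
\]
is linear. The double integral is absolutely convergent because \(K\) and \(\nabla\varphi\) are bounded and \(\bar\rho_s\), \(\gamma\) have finite total variation. Fubini therefore applies freely.

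Next, I substitute
\[
\mu_s^{i-1}-\bar\rho_s=\frac1{i-1}\sum_{j<i}(\delta_{X_s^j}-\bar\rho_s)
\]
into \eqref{eq:leading-interaction-term}. Linearity then gives
\[
I_{s,0}^{N,n}(\varphi)=\frac1{\sqrt N}\sum_{i=2}^N\sum_{j=1}^{i-1}\frac{h_n(i/N)}{i-1}\,\bigl\langle\bar\rho_s,F_\varphi[\delta_{X_s^j}-\bar\rho_s]\bigr\rangle .
\]
The index set is \(\{(i,j):1\le j<i\le N\}\), which I rewrite as \(\{1\le j\le N-1,\ j+1\le i\le N\}\). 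Summing over \(i\) first produces exactly the coefficient \((\mathsf T_N h_n)(j/N)=\sum_{i=j+1}^N h_n(i/N)/(i-1)\) in front of particle \(j\). The convention \((\mathsf T_N h_n)(1)=0\) lets me extend the \(j\)-sum to \(j=N\) at no cost, since \(j/N=1\) for \(j=N\).

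Finally, I apply linearity in reverse:
\[
\frac1{\sqrt N}\sum_{j=1}^N(\mathsf T_N h_n)(j/N)\,(\delta_{X_s^j}-\bar\rho_s)=Y_s^N(\mathsf T_N h_n),
\]
by the definition of \(Y^N(g)\) with \(g=\mathsf T_N h_n\). Here \(g\) only needs to be defined at the grid points \(j/N\), which is all the definition of \(Y^N(g)\) uses. Pulling the finite sum back inside \(F_\varphi\) yields \(\langle\bar\rho_s,(K*Y_s^N(\mathsf T_N h_n))\cdot\nabla\varphi\rangle\), which is \eqref{eq:leading-identity}.

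There is no real obstacle here. The only points needing care are the bookkeeping of the summation range, including the boundary value at \(j=N\), and the justification of exchanging \(K*\) with finite sums, which is immediate since all sums are finite and \(K\) is bounded.
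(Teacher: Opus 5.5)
Your proposal is correct and follows essentially the same route as the paper: you expand $\mu_s^{i-1}-\bar\rho_s$, interchange the finite triangular sum so that the weights $(\mathsf T_N h_n)(j/N)$ collect on each particle, and recognize $Y_s^N(\mathsf T_N h_n)$ by linearity. Your explicit remark that the convention $(\mathsf T_N h_n)(1)=0$ absorbs the $j=N$ term makes precise the last equality, which the paper leaves implicit.
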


\begin{proof}
Using \eqref{eq:leading-interaction-term} and
\[
  \mu_s^{i-1}-\bar\rho_s
  =
  \frac1{i-1}\sum_{j<i}(\delta_{X_s^j}-\bar\rho_s),
\]
we get
\[
\begin{aligned}
I_{s,0}^{N,n}(\varphi)
&=
\frac1{\sqrt N}
\sum_{i=2}^N
h_n(i/N)
\frac1{i-1}
\sum_{j<i}
\left\langle
\bar\rho_s,
(K*(\delta_{X_s^j}-\bar\rho_s))\cdot\nabla\varphi
\right\rangle\\
&=
\frac1{\sqrt N}
\sum_{j=1}^{N-1}
\left(
\sum_{i=j+1}^N\frac{h_n(i/N)}{i-1}
\right)
\left\langle
\bar\rho_s,
(K*(\delta_{X_s^j}-\bar\rho_s))\cdot\nabla\varphi
\right\rangle\\
&=
\frac1{\sqrt N}
\sum_{j=1}^{N-1}
(\mathsf T_N h_n)(j/N)
\left\langle
\bar\rho_s,
(K*(\delta_{X_s^j}-\bar\rho_s))\cdot\nabla\varphi
\right\rangle\\
&=
\left\langle
\bar\rho_s,
(K*Y_s^N(\mathsf T_N h_n))\cdot\nabla\varphi
\right\rangle.
\end{aligned}
\]
\end{proof}

\subsection{Limit of the leading interaction term}

In this subsection, we identify the limit of the leading interaction term.
Lemma~\ref{lem:exact-summation} expresses this term through
\(Y^N(\mathsf T_N h_n)\), whereas the next hierarchy level is
\(Y^{N,n+1}=Y^N(h_{n+1})\). It therefore remains to replace
\(\mathsf T_N h_n\) by \(h_{n+1}\). The next lemma proves this replacement
after testing and time integration.

\begin{lemma}\label{lem:TN-error-weighted-lln}
Fix \(n\ge0\). Let \((s,x)\mapsto\Gamma_s(x)\) be jointly Borel measurable
and bounded:
\[
\sup_{s\le T}\|\Gamma_s\|_\infty<\infty.
\]
Then, as \(N\to\infty\),
\[
\mathbb E\int_0^T
\left|
\left\langle
Y_s^N(\mathsf T_N h_n)-Y_s^{N,n+1},\Gamma_s
\right\rangle
\right|\,\dd s
\to0.
\]
\end{lemma}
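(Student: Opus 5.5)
The difference is linear in the weights. Setting \(e_N^{(n)}(j)=(\mathsf T_N h_n)(j/N)-h_{n+1}(j/N)\) as in Lemma~\ref{lem:TN-error-summability}, we have
\[
\left\langle Y_s^N(\mathsf T_N h_n)-Y_s^{N,n+1},\Gamma_s\right\rangle
=
\frac1{\sqrt N}\sum_{j=1}^N e_N^{(n)}(j)\bigl(\Gamma_s(X_s^j)-\langle\bar\rho_s,\Gamma_s\rangle\bigr).
\]
The boundary convention \((\mathsf T_N h_n)(1)=0\) is absorbed into \(e_N^{(n)}(N)=-h_{n+1}(1)=0\). The plan is to decompose each centered summand, conditionally on the predecessors, into a martingale difference and a conditional bias:
\[
\Gamma_s(X_s^j)-\langle\bar\rho_s,\Gamma_s\rangle
=
\underbrace{\Gamma_s(X_s^j)-\langle\nu_s^j,\Gamma_s\rangle}_{D_j(s)}
+
\underbrace{\langle\nu_s^j-\bar\rho_s,\Gamma_s\rangle}_{b_j(s)}.
\]
Here \(\nu_s^j\) are the conditional laws of Lemma~\ref{lem:paper1-estimates}(ii), and for \(j=1\) we take \(\nu_s^1=\Law(X_s^1)\). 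This decomposition is valid for \(\dd s\otimes\mathbb P\)-a.e. \((s,\omega)\), which suffices after integrating in time.

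For the martingale part, fix \(s\) in the full-measure set. The \(D_j(s)\) are \(\mathscr G_s^j\)-measurable, have zero conditional mean given \(\mathscr G_s^{j-1}\), and satisfy \(|D_j(s)|\le 2\sup\|\Gamma\|_\infty\). Cross terms therefore vanish in \(L^2\), and
\[
\mathbb E\Bigl|\frac1{\sqrt N}\sum_j e_N^{(n)}(j)D_j(s)\Bigr|^2
\le
\frac{4\|\Gamma\|_\infty^2}{N}\sum_{j=1}^N|e_N^{(n)}(j)|^2 .
\]
This tends to \(0\) uniformly in \(s\) by the first statement of Lemma~\ref{lem:TN-error-summability}. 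Integrating over \([0,T]\) and applying Cauchy--Schwarz disposes of this part.

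For the bias part, we use \(|b_j(s)|\le\|\Gamma\|_\infty\|\nu_s^j-\bar\rho_s\|_{\mathrm{TV}}\) together with Lemma~\ref{lem:paper1-estimates}(iii). This gives \((\mathbb E|b_j(s)|^2)^{1/2}\le C_T(j-1)^{-1/2}\le C_T\sqrt2\,j^{-1/2}\) for \(j\ge2\), and the trivial bound \(2\|\Gamma\|_\infty\) for \(j=1\). By Minkowski's inequality,
\[
\mathbb E\Bigl|\frac1{\sqrt N}\sum_j e_N^{(n)}(j)b_j(s)\Bigr|
\le
C\,\frac1{\sqrt N}\sum_{j=1}^N\frac{|e_N^{(n)}(j)|}{\sqrt j}
\]
for a.e. \(s\). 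The right-hand side tends to \(0\) by the second statement of Lemma~\ref{lem:TN-error-summability}, which was tailored for exactly this purpose. Combining the two parts and integrating in \(s\) proves the lemma.

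The only delicate point is measurability and the a.e.-in-time nature of the conditional laws, since the identity \(\nu_s^j=\Law(X_s^j\mid\mathscr G_s^{j-1})\) holds only \(\dd s\otimes\mathbb P\)-a.s. I would handle this by working under \(\dd s\otimes\mathbb P\) throughout, using Fubini. The joint measurability of \((s,\omega)\mapsto\nu_s^j\) and of \(\Gamma\) makes both parts jointly measurable, and both estimates hold for a.e. \(s\), which is all the time integral requires. The quantitative inputs themselves are all supplied by Lemma~\ref{lem:TN-error-summability}.
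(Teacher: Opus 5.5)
Your proposal is correct and follows the paper's proof essentially step for step. You use the same split into the martingale-difference part \(\Gamma_s(X_s^i)-\langle\nu_s^i,\Gamma_s\rangle\), controlled by orthogonality and the \(L^2\) statement of Lemma~\ref{lem:TN-error-summability}, and the conditional-bias part, controlled by the total-variation bound of Lemma~\ref{lem:paper1-estimates}\textup{(iii)} (trivially for \(i=1\)) together with the predecessor-weighted \(L^1\) statement; your observation that \(e_N^{(n)}(N)=0\) because \(h_{n+1}(1)=0\) is a consistency check the paper leaves implicit.
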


\begin{proof}
Use the jointly measurable conditional-law kernels constructed by time-space
disintegration in Lemma~\ref{lem:paper1-estimates}\textup{(ii)}. By the
definition of the weighted fluctuation fields,
\[
\begin{aligned}
&\left\langle
Y_s^N(\mathsf T_N h_n)-Y_s^{N,n+1},\Gamma_s
\right\rangle\\
&\quad=
\frac1{\sqrt N}\sum_{i=1}^N
\bigl((\mathsf T_N h_n)(i/N)-h_{n+1}(i/N)\bigr)
\left(
\Gamma_s(X_s^i)-\langle\bar\rho_s,\Gamma_s\rangle
\right).
\end{aligned}
\]
For a.e.\ \(s\), decompose
\[
\Gamma_s(X_s^i)-\langle\bar\rho_s,\Gamma_s\rangle
=
\left(
\Gamma_s(X_s^i)-\langle\nu_s^i,\Gamma_s\rangle
\right)
+
\left(
\langle\nu_s^i-\bar\rho_s,\Gamma_s\rangle
\right).
\]
The first part is a martingale-difference sequence in \(i\). By orthogonality,
the squared \(L^2\)-norm of its weighted sum is bounded by
\[
\frac{C}{N}
\sum_{i=1}^N
\bigl|(\mathsf T_N h_n)(i/N)-h_{n+1}(i/N)\bigr|^2
\to0
\]
by Lemma~\ref{lem:TN-error-summability}. Hence this contribution tends
to zero in \(L^1\).

For the predictable bias part,
\[
\left|
\langle\nu_s^i-\bar\rho_s,\Gamma_s\rangle
\right|
\le
\|\Gamma_s\|_\infty
\|\nu_s^i-\bar\rho_s\|_{\mathrm{TV}}.
\]
For \(i\ge2\), by Pinsker's inequality and
Lemma~\ref{lem:paper1-estimates}\textup{(iii)},
\[
\mathbb E\|\nu_s^i-\bar\rho_s\|_{\mathrm{TV}}
\le
\left(
2\mathbb E H(\nu_s^i\mid\bar\rho_s)
\right)^{1/2}
\le
\frac{C_T}{\sqrt{i-1}}.
\]
For \(i=1\), the same quantity is uniformly bounded by \(2\). Thus the
\(L^1\)-norm of the predictable-bias contribution is bounded by
\[
\begin{aligned}
C\biggl(&
\frac{|(\mathsf T_N h_n)(1/N)-h_{n+1}(1/N)|}{\sqrt N}\\
&+
\frac1{\sqrt N}
\sum_{i=2}^N
\frac{|(\mathsf T_N h_n)(i/N)-h_{n+1}(i/N)|}{\sqrt{i-1}}
\biggr)
\longrightarrow0.
\end{aligned}
\]
Both terms vanish by the \(L^2\) and predecessor-weighted \(L^1\) estimates in
Lemma~\ref{lem:TN-error-summability}; for the second, also use
\(1/\sqrt{i-1}\le \sqrt2/\sqrt i\) for \(i\ge2\). The bounds are uniform in
\(s\), so integration over \([0,T]\) proves the claim.
\end{proof}

To apply this replacement to the leading interaction, for
\(K^\vee(x):=K(-x)\) and \(\varphi\in C_c^\infty(\R^d)\), set
{
\[
\Gamma_s^\varphi(y)
:=
\int_{\R^d}
\nabla\varphi(x)\cdot K(x-y)\,\bar\rho_s(\dd x)
=
\sum_{a=1}^d
K_a^\vee*((\partial_a\varphi)\bar\rho_s)(y).
\]
Since \(K\in C_b^\infty\cap L^1\), the Gagliardo--Nirenberg interpolation
inequality gives \(K\in H^r\) for every \(r\ge0\). For
\(\mu\in\mathcal P(\R^d)\) and bounded \(f\), Minkowski's inequality gives
\begin{equation}\label{eq:kernel-measure-convolution-bound}
\|K^\vee*(f\mu)\|_{H^r}
\le
\|K\|_{H^r}\|f\mu\|_{\mathrm{TV}}
\le
\|K\|_{H^r}\|f\|_\infty,
\qquad r\ge0.
\end{equation}
Applying \eqref{eq:kernel-measure-convolution-bound} componentwise with
\(\mu=\bar\rho_s\) and \(f=\partial_a\varphi\) gives
\begin{equation}\label{eq:gamma-unweighted-bound}
\sup_{s\le T}\|\Gamma_s^\varphi\|_{H^r}
\le
C_\varphi\|K\|_{H^r}.
\end{equation}
Since \(K\) is bounded, we also have
\(\sup_{s\le T}\|\Gamma_s^\varphi\|_\infty<\infty\).
}

We can now identify the leading interaction along every convergent
subsequence.

\begin{lemma}\label{lem:leading-limit}
Fix \(n\ge0\) and let \(\varphi\in C_c^\infty(\R^d)\). Then, as
\(N\to\infty\),
\begin{equation}\label{eq:leading-interaction-approximation}
\mathbb E\sup_{t\le T}
\left|
\int_0^t I_{s,0}^{N,n}(\varphi)\,\dd s
-
\int_0^t
\langle Y_s^{N,n+1},\Gamma_s^\varphi\rangle\,\dd s
\right|
\longrightarrow0.
\end{equation}
Consequently, along any subsequence for which
\(Y^{N,n+1}\LawConv Y^{n+1}\) in \(E\),
\[
  \left(
  Y^{N,n+1},
  \left(\int_0^t I_{s,0}^{N,n}(\varphi)\,\dd s\right)_{t\in[0,T]}
  \right)
  \LawConv
  \left(
  Y^{n+1},
  \left(\int_0^t
  \langle Y_s^{n+1},\Gamma_s^\varphi\rangle\,\dd s
  \right)_{t\in[0,T]}
  \right)
\]
weakly in \(E\times C([0,T];\R)\).  The second coordinate of the limit is
equivalently
\[
\left(\int_0^t
\left\langle
\bar\rho_s,(K*Y_s^{n+1})\cdot\nabla\varphi
\right\rangle\,\dd s\right)_{t\in[0,T]}.
\]
\end{lemma}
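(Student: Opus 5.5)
The plan has three parts: rewrite the leading term via the exact summation identity, replace the discrete weight by $h_{n+1}$ using the weighted law of large numbers, and pass to the limit with a continuous mapping argument.

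\emph{Step 1: rewrite the integrand.} By Lemma~\ref{lem:exact-summation}, $I_{s,0}^{N,n}(\varphi)=\langle\bar\rho_s,(K*Y_s^N(\mathsf T_Nh_n))\cdot\nabla\varphi\rangle$. Unfolding the convolution and using Fubini (everything is a finite signed measure for fixed $N$), this equals $\langle Y_s^N(\mathsf T_N h_n),\Gamma_s^\varphi\rangle$. Concretely,
\[
\int\nabla\varphi(x)\cdot\int K(x-y)\,\gamma(\dd y)\,\bar\rho_s(\dd x)=\int\Gamma_s^\varphi(y)\,\gamma(\dd y),
\]
so that
\[
\int_0^t I_{s,0}^{N,n}(\varphi)\,\dd s-\int_0^t\langle Y_s^{N,n+1},\Gamma_s^\varphi\rangle\,\dd s
=\int_0^t\langle Y_s^N(\mathsf T_Nh_n)-Y_s^{N,n+1},\Gamma_s^\varphi\rangle\,\dd s.
\]

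\emph{Step 2: replacement estimate.} The supremum over $t\le T$ of the modulus of the last expression is bounded by $\int_0^T|\langle Y_s^N(\mathsf T_Nh_n)-Y_s^{N,n+1},\Gamma_s^\varphi\rangle|\,\dd s$. We apply Lemma~\ref{lem:TN-error-weighted-lln} with $\Gamma=\Gamma^\varphi$. Two properties are needed:
\begin{itemize}
\item joint measurability of $(s,y)\mapsto\Gamma_s^\varphi(y)$, which follows from the continuity of $y\mapsto\Gamma_s^\varphi(y)$ (since $K$ is continuous and bounded) together with measurability in $s$ of $s\mapsto\bar\rho_s$, which is in fact weakly continuous;
\item the uniform bound $\sup_s\|\Gamma_s^\varphi\|_\infty\le\|K\|_\infty\|\nabla\varphi\|_\infty$.
\end{itemize}
This yields \eqref{eq:leading-interaction-approximation}.

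\emph{Step 3: continuous mapping.} Define $\Psi:E\to C([0,T];\R)$ by $\Psi(z)_t=\int_0^t\langle z_s,\Gamma_s^\varphi\rangle\,\dd s$. To make sense of the pairing with $z_s\in H^{-\beta,-m}$, I need $\Gamma_s^\varphi\in H^{\beta,m}$ uniformly in $s$, and continuity of $s\mapsto\Gamma_s^\varphi$ in that space (or at least measurability plus a uniform bound, which suffices for the integral).

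This is the main obstacle, because \eqref{eq:gamma-unweighted-bound} only gives unweighted $H^r$ bounds, and the weight $\langle y\rangle^m$ requires decay of $\Gamma_s^\varphi$. My plan is to use that $\varphi$ has compact support in some ball $B_R$. Then
\[
\langle y\rangle^m\partial^\alpha\Gamma_s^\varphi(y)=\int\nabla\varphi(x)\cdot\langle y\rangle^m(-1)^{|\alpha|}(\partial^\alpha K)(x-y)\,\bar\rho_s(\dd x),
\]
with $|x|\le R$, and Peetre's inequality gives $\langle y\rangle^m\le C_R\langle x-y\rangle^m$. It then suffices to have $\langle\cdot\rangle^m\partial^\alpha K\in L^2$ for $|\alpha|\le\lceil\beta\rceil$.

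Assumption~A does not give this weighted decay of $K$. If it fails, the fallback is to avoid the weighted space entirely. The limit $Y^{n+1}$ has paths in $H^{-\beta_\ast}$ with $\mathbb E\sup_t\|Y_t^{n+1}\|_{H^{-\beta_\ast}}^2<\infty$ by Corollary~\ref{cor:subsequential-skorokhod}(iii), and $\Gamma_s^\varphi\in H^{\beta_\ast}$ by \eqref{eq:gamma-unweighted-bound}. So $\Psi$ is well defined on the relevant set. I would then prove convergence by a truncation argument:
\begin{itemize}
\item approximate $\Gamma_s^\varphi$ by $\chi_R\Gamma_s^\varphi$, which lies in $H^{\beta,m}$, with error $\|(1-\chi_R)\Gamma_s^\varphi\|_{H^{\beta_\ast}}$;
\item control that error by the uniform $H^{-\beta_\ast}$ moment bounds of Proposition~\ref{prop:weighted-apriori};
\item show that $\|(1-\chi_R)\Gamma_s^\varphi\|_{H^{\beta_\ast}}\to0$ uniformly in $s$, using $K\in H^r$ and tightness of $(\bar\rho_s)_{s\le T}$.
\end{itemize}

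With $\Psi$ continuous (or approximately continuous in the above sense), the joint convergence $(Y^{N,n+1},\Psi(Y^{N,n+1}))\Rightarrow(Y^{n+1},\Psi(Y^{n+1}))$ follows from $Y^{N,n+1}\Rightarrow Y^{n+1}$. Combining this with Step~2 and Slutsky's lemma gives the stated convergence in $E\times C([0,T];\R)$. Finally, reversing the Fubini identity of Step~1 identifies the second coordinate of the limit with $\int_0^t\langle\bar\rho_s,(K*Y_s^{n+1})\cdot\nabla\varphi\rangle\,\dd s$.
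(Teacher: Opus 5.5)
Your proposal is correct and follows essentially the paper's route: you rewrite $I_{s,0}^{N,n}(\varphi)$ as $\langle Y_s^N(\mathsf T_N h_n),\Gamma_s^\varphi\rangle$, apply Lemma~\ref{lem:TN-error-weighted-lln} to replace $\mathsf T_N h_n$ by $h_{n+1}$, and then pass to the limit using the cutoff $\chi_R\Gamma_s^\varphi$ (which gives a continuous functional on $E$) together with a uniform $H^{-\beta_\ast}$ tail estimate. That truncation step is exactly your ``fallback'', and it is what the paper does; it is the right choice, since Assumption~A gives no weighted decay of $K$. The differences are minor: the paper proves $\|(1-\chi_R)\Gamma_s^\varphi\|_{H^{\beta_\ast}}\to0$ pointwise in $s$ and concludes by dominated convergence, whereas you get it uniformly in $s$ (compact support of $\nabla\varphi$ already suffices there, without tightness of $(\bar\rho_s)$), and for $z=Y_s^{n+1}\in H^{-\beta_\ast}$ the identity $\langle\bar\rho_s,(K*z)\cdot\nabla\varphi\rangle=\langle z,\Gamma_s^\varphi\rangle$ should be justified by density and continuity in $H^{-\beta_\ast}$ rather than by literal Fubini.
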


\begin{proof}
{
Since \(K\in H^r\) for every \(r\ge0\), we have
\(K*z\in C_b^\infty(\R^d;\R^d)\) for every
\(z\in H^{-\beta_\ast}\). Thus the interaction is an ordinary integral, and
the definition of \(\Gamma_s^\varphi\) gives
\[
\left\langle
\bar\rho_s,
(K*z)\cdot\nabla\varphi
\right\rangle
=
\langle z,\Gamma_s^\varphi\rangle.
\]
}
Hence \eqref{eq:leading-identity} gives
\[
I_{s,0}^{N,n}(\varphi)
=
\left\langle Y_s^N(\mathsf T_N h_n),\Gamma_s^\varphi\right\rangle.
\]
Since \(\Gamma^\varphi\) is bounded, Lemma~\ref{lem:TN-error-weighted-lln}
implies
\[
\begin{aligned}
&\mathbb E\sup_{t\le T}
\left|
\int_0^t I_{s,0}^{N,n}(\varphi)\,\dd s
-
\int_0^t
\langle Y_s^{N,n+1},\Gamma_s^\varphi\rangle\,\dd s
\right|\\
&\qquad\le
\mathbb E\int_0^T
\left|
\left\langle
Y_s^N(\mathsf T_N h_n)-Y_s^{N,n+1},\Gamma_s^\varphi
\right\rangle
\right|\,\dd s
\longrightarrow0.
\end{aligned}
\]
This proves \eqref{eq:leading-interaction-approximation}.

It remains to pass the term containing \(Y^{N,n+1}\) to the limit. Choose
\(\chi\in C_c^\infty(\R^d)\) with \(\chi=1\) on the unit ball, set
\(\chi_R(x)=\chi(x/R)\), and write
\(\Gamma_{s,R}^\varphi=\chi_R\Gamma_s^\varphi\). For every fixed \(R\),
\[
z\longmapsto
\left(\int_0^t
\langle z_s,\Gamma_{s,R}^\varphi\rangle\,\dd s\right)_{t\in[0,T]}
\]
is continuous from \(E\) to \(C([0,T];\R)\), since
{the weak continuity of \(s\mapsto\bar\rho_s\) and the bounded
smoothness of \(K\) imply that \(s\mapsto\Gamma_{s,R}^\varphi\) is continuous
in \(H^{\beta,m}\).}

By the \(H^{-\beta_\ast}\) estimate \eqref{eq:weighted-apriori-bound} and lower
semicontinuity,
\[
\sup_N\mathbb E\sup_{s\le T}
\|Y_s^{N,n+1}\|_{H^{-\beta_\ast}}^2
+
\mathbb E\sup_{s\le T}
\|Y_s^{n+1}\|_{H^{-\beta_\ast}}^2
<\infty.
\]
Hence, for either \(Z=Y^{N,n+1}\) or \(Z=Y^{n+1}\),
\[
\mathbb E\sup_{t\le T}
\left|
\int_0^t
\langle Z_s,(1-\chi_R)\Gamma_s^\varphi\rangle\,\dd s
\right|
\le
C_n\int_0^T
\|(1-\chi_R)\Gamma_s^\varphi\|_{H^{\beta_\ast}}\,\dd s.
\]
For each \(s\), multiplication by the cutoff gives
\((1-\chi_R)\Gamma_s^\varphi\to0\) in \(H^{\beta_\ast}\), and these multiplier
operators are uniformly bounded in \(R\). Thus the right-hand side tends to
zero by \eqref{eq:gamma-unweighted-bound} and dominated convergence. The continuous
mapping theorem for the truncated functionals, followed by this uniform tail
estimate, proves the joint convergence of
\[
\left(
Y^{N,n+1},
\left(\int_0^t
\langle Y_s^{N,n+1},\Gamma_s^\varphi\rangle\,\dd s
\right)_{t\in[0,T]}
\right).
\]
Together with \eqref{eq:leading-interaction-approximation}, this gives the
joint convergence in the statement. The equivalent form of the limiting
interaction follows from the first identity in the proof.
\end{proof}

\subsection{Initial fluctuations and martingale noise}

In this subsection, we identify the initial fluctuations and martingale noise
separately. We first use the conditional-law estimates and a logarithmic
Riemann sum to compute the limiting martingale covariations. We then apply the
weighted central limit theorem to the initial fluctuations and the martingale
central limit theorem to the martingale coordinates.

\begin{lemma}\label{lem:weighted-martingale-covariance}
Fix \(n,k\ge0\) and let \(\varphi,\psi\in C_c^\infty(\R^d)\). Then
\begin{align}
\label{eq:finiteN-martingale-covariance}
\left\langle
M^{N,n}(\varphi),M^{N,k}(\psi)
\right\rangle_t
&=
\frac1N
\sum_{i=1}^N
h_n(i/N)h_k(i/N)
\int_0^t
\nabla\varphi(X_s^i)\cdot
\nabla\psi(X_s^i)\,\dd s.
\end{align}
Moreover, as \(N\to\infty\),
\begin{equation}\label{eq:martingale-covariance-limit}
\left\langle
M^{N,n}(\varphi),M^{N,k}(\psi)
\right\rangle_t
\to
\frac{(n+k)!}{n!\,k!}
\int_0^t
\left\langle
\bar\rho_s,
\nabla\varphi\cdot
\nabla\psi
\right\rangle \dd s
\end{equation}
in \(L^1\), uniformly in \(t\in[0,T]\).
\end{lemma}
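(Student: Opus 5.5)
The identity \eqref{eq:finiteN-martingale-covariance} follows directly from the definition of \(M^{N,n}(\varphi)\) in Lemma~\ref{lem:finiteN-mp}. The Brownian motions \(B^i\) are independent standard \(d\)-dimensional, so \(\dd\langle B^{i,a},B^{j,b}\rangle_s=\delta_{ij}\delta_{ab}\,\dd s\). Bilinearity of the bracket then leaves only the diagonal terms \(i=j\), each carrying the factor \(N^{-1}h_n(i/N)h_k(i/N)\). The integrands are bounded and the weights are finite for fixed \(N\), so all stochastic integrals are square-integrable martingales and the bracket formula is legitimate.

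For the limit \eqref{eq:martingale-covariance-limit}, set \(g:=\nabla\varphi\cdot\nabla\psi\), which is bounded, and \(w_i:=h_n(i/N)h_k(i/N)\). I would split
\[
\frac1N\sum_{i=1}^N w_i\bigl(g(X_s^i)-\langle\bar\rho_s,g\rangle\bigr)
=\frac1N\sum_{i=1}^N w_i\bigl(g(X_s^i)-\langle\nu_s^i,g\rangle\bigr)
+\frac1N\sum_{i=1}^N w_i\langle\nu_s^i-\bar\rho_s,g\rangle,
\]
with \(\nu_s^1:=\Law(X_s^1)\) and \(\nu_s^i\) the conditional-law kernels of Lemma~\ref{lem:paper1-estimates}\textup{(ii)}. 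The remaining part is the deterministic term \(\bigl(N^{-1}\sum_i w_i\bigr)\int_0^t\langle\bar\rho_s,g\rangle\dd s\). By Lemma~\ref{lem:riemann-weights}, \(N^{-1}\sum_i w_i\to(n+k)!/(n!k!)\), and since \(g\) is bounded this deterministic term converges uniformly in \(t\).

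The first piece is a sum of \(\mathscr G_s^i\)-martingale differences in \(i\), for a.e. \(s\). By orthogonality its second moment is at most \(C N^{-2}\sum_i w_i^2\), which is \(O(N^{-1})\) by the first bound in Lemma~\ref{lem:product-weight-bounds}. The second piece is bounded in \(L^1\) using \(\|g\|_\infty\) and Lemma~\ref{lem:paper1-estimates}\textup{(iii)} with Pinsker, exactly as in the proof of Lemma~\ref{lem:TN-error-weighted-lln}. This gives the bound
\[
C N^{-1}\Bigl(|w_1|+\sum_{i\ge2}|w_i|(i-1)^{-1/2}\Bigr),
\]
which tends to zero by the second statement of Lemma~\ref{lem:product-weight-bounds}.

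For uniformity in \(t\), I bound \(\sup_{t\le T}\bigl|\int_0^t(\cdot)\dd s\bigr|\) by \(\int_0^T|\cdot|\,\dd s\). The per-time estimates above hold for a.e. \(s\) with constants independent of \(s\), so Fubini gives \(L^1\) convergence uniformly in \(t\).

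The only delicate point is measurability in \(s\) of the conditional decomposition. This is supplied by the jointly measurable kernels of Lemma~\ref{lem:paper1-estimates}\textup{(ii)}, so I expect no real obstacle beyond bookkeeping.
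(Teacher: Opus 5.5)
Your proposal is correct and follows the paper's proof essentially step by step. It derives the bracket identity from independence of the Brownian motions, then splits \(g(X_s^i)-\langle\bar\rho_s,g\rangle\) via the conditional-law kernels \(\nu_s^i\). The martingale-difference part is controlled by orthogonality and the first bound of Lemma~\ref{lem:product-weight-bounds}, and the bias part by Lemma~\ref{lem:paper1-estimates}\textup{(iii)} with the second bound of Lemma~\ref{lem:product-weight-bounds}. The deterministic term goes through the Riemann-sum limit of Lemma~\ref{lem:riemann-weights}, and uniformity in \(t\) comes from \(\sup_{t\le T}\bigl|\int_0^t f_s\,\dd s\bigr|\le\int_0^T|f_s|\,\dd s\), exactly as in the paper.
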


\begin{proof}
The identity \eqref{eq:finiteN-martingale-covariance} follows from independence
of the Brownian motions. Set \(g=\nabla\varphi\cdot\nabla\psi\), and use the
jointly measurable conditional-law kernels constructed in
Lemma~\ref{lem:paper1-estimates}\textup{(ii)}. For a.e.\ \(s\), write
\[
g(X_s^i)-\langle\bar\rho_s,g\rangle
=
\bigl(g(X_s^i)-\langle\nu_s^i,g\rangle\bigr)
+
\langle\nu_s^i-\bar\rho_s,g\rangle.
\]
The first term is a martingale difference in \(i\). Hence
\[
\mathbb E
\left|
\frac1N\sum_{i=1}^N h_n(i/N)h_k(i/N)
\bigl(g(X_s^i)-\langle\nu_s^i,g\rangle\bigr)
\right|
\le
\left(
\frac{C}{N^2}\sum_{i=1}^N|h_n(i/N)h_k(i/N)|^2
\right)^{1/2}
\le \frac{C_{n,k}}{\sqrt N}.
\]
For the second term, the trivial bound for \(i=1\), Pinsker's inequality, and
Lemma~\ref{lem:paper1-estimates}\textup{(iii)} give
\[
\mathbb E
\left|
\frac1N\sum_{i=1}^N h_n(i/N)h_k(i/N)
\langle\nu_s^i-\bar\rho_s,g\rangle
\right|
\le
\frac{C}{N}
\left(
|h_n(1/N)h_k(1/N)|
+
\sum_{i=2}^N
\frac{|h_n(i/N)h_k(i/N)|}{\sqrt{i-1}}
\right)
\longrightarrow0.
\]
Both bounds are uniform in \(s\), and the last limit follows from
Lemma~\ref{lem:product-weight-bounds}. Using
\(\sup_{t\le T}|\int_0^t f_s\,\dd s|\le\int_0^T|f_s|\,\dd s\), we obtain
\[
\mathbb E\sup_{t\le T}
\left|
\frac1N\sum_{i=1}^N h_n(i/N)h_k(i/N)
\int_0^t
\bigl(g(X_s^i)-\langle\bar\rho_s,g\rangle\bigr)\,\dd s
\right|
\longrightarrow0.
\]

Finally, Lemma~\ref{lem:riemann-weights} gives
\[
\frac1N\sum_{i=1}^N h_n(i/N)h_k(i/N)
\longrightarrow
\frac{(n+k)!}{n!\,k!}.
\]
Since \(g\) is bounded, the corresponding deterministic mean terms converge
uniformly on \([0,T]\). This proves
\eqref{eq:martingale-covariance-limit}.
\end{proof}

Using this covariation limit, we can now identify the martingale coordinates.
The initial coordinates follow from a weighted Lindeberg--Feller argument, so
we state the two limits together.

\begin{proposition}\label{prop:gaussian-input-convergence}
Let \(n_1,\ldots,n_q,k_1,\ldots,k_r\ge0\), and let
\(\varphi_1,\ldots,\varphi_q,\psi_1,\ldots,\psi_r\in
C_c^\infty(\R^d)\). Then, as \(N\to\infty\), the following convergences hold.
\begin{enumerate}
\item[(i)] The initial vector
\[
\bigl(\langle Y_0^{N,n_a},\varphi_a\rangle\bigr)_{a=1}^q
\]
converges weakly in \(\R^q\) to the centered Gaussian vector with covariance
\eqref{eq:limit-initial-covariance}.

\item[(ii)] The martingale vector
\[
\bigl(M^{N,k_b}(\psi_b)\bigr)_{b=1}^r
\]
converges weakly in \(C([0,T];\R)^r\) to the centered continuous Gaussian
martingale with covariations \eqref{eq:limit-martingale-covariance}.
\end{enumerate}
\end{proposition}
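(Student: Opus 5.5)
For part (i), we use the Cramér--Wold device and reduce to a scalar statement. Fix real coefficients \(\lambda_1,\ldots,\lambda_q\) and set
\[
S_N:=\sum_{a=1}^q\lambda_a\langle Y_0^{N,n_a},\varphi_a\rangle
=\frac1{\sqrt N}\sum_{i=1}^N\zeta_{N,i},
\qquad
\zeta_{N,i}:=\sum_{a=1}^q\lambda_a h_{n_a}(i/N)\bigl(\varphi_a(X_0^i)-\langle\bar\rho_0,\varphi_a\rangle\bigr).
\]
By Assumption~A, the summands \(\zeta_{N,i}\), \(1\le i\le N\), are independent and centered, so \(S_N\) is a triangular array to which the Lindeberg--Feller theorem applies. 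Its variance equals
\[
\sum_{a,c}\lambda_a\lambda_c\,\frac1N\sum_{i=1}^N h_{n_a}(i/N)h_{n_c}(i/N)\,\mathrm{Cov}_{\bar\rho_0}(\varphi_a,\varphi_c).
\]
Lemma~\ref{lem:riemann-weights} shows that this converges to the quadratic form built from \eqref{eq:limit-initial-covariance}. For the Lindeberg condition, note that \(|\zeta_{N,i}|\le C\max_a h_{n_a}(i/N)\). By the second statement of Lemma~\ref{lem:elementary-weights}, \(N^{-1/2}\max_i|\zeta_{N,i}|\to0\) deterministically. Hence for every \(\varepsilon>0\) the sets \(\{|\zeta_{N,i}|>\varepsilon\sqrt N\}\) are empty for all large \(N\), and the Lindeberg sum vanishes identically. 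This proves (i).

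For part (ii), we invoke the functional martingale central limit theorem for continuous local martingales (for instance in the form of Ethier--Kurtz, Theorem~7.1.4, or Jacod--Shiryaev VIII.3.11). Take the vector \(\mathbf M^N:=(M^{N,k_b}(\psi_b))_{b=1}^r\). It is a continuous \(\R^r\)-valued martingale with respect to the filtration generated by the initial data and the Brownian motions. Its jumps are identically zero, so the jump and Lindeberg-type conditions hold trivially. What remains is convergence in probability of the predictable covariation matrix, for each \(t\), to a deterministic continuous matrix function \(C(t)\) with entries
\[
C_{bc}(t)=\frac{(k_b+k_c)!}{k_b!\,k_c!}\int_0^t\langle\bar\rho_s,\nabla\psi_b\cdot\nabla\psi_c\rangle\,\dd s .
\]
This is exactly Lemma~\ref{lem:weighted-martingale-covariance}, which gives \(L^1\) convergence, uniformly in \(t\in[0,T]\). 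The theorem then yields weak convergence in \(C([0,T];\R^r)\) to a continuous centered Gaussian process with independent increments and covariance \(C\). Such a process is a continuous Gaussian martingale with covariations \eqref{eq:limit-martingale-covariance}. The matrix \(C(t)\) is nondecreasing in the positive semidefinite order, because \((\frac{(k_b+k_c)!}{k_b!k_c!})_{b,c}\) is a Gram matrix of the \(h_{k_b}\) in \(L^2(0,1)\) (Lemma~\ref{lem:logarithmic-weights}) and \(\nabla\psi_b\cdot\nabla\psi_c\) gives a Gram structure in \(L^2(\bar\rho_s)\). Their Schur product is positive semidefinite, so the limit is well defined.

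The main point needing care, rather than a true obstacle, is that in (ii) the quadratic variations are random and the particles are non-exchangeable and dependent. This is why we rely on Lemma~\ref{lem:weighted-martingale-covariance}, whose conditional-law replacement already absorbs the dependence. The unbounded logarithmic weights are harmless in both parts: in (i) through the deterministic bound \(N^{-1/2}\max h_n(i/N)\to0\), and in (ii) because the martingales are continuous, so no jump control is needed. The statement concerns the two families separately, so no joint convergence of initial data and noise needs to be proved here.
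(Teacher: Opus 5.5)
Your proposal is correct and follows the paper's proof essentially step by step. In (i) you use Lindeberg--Feller for the independent weighted initial summands, with Lemma~\ref{lem:riemann-weights} for the covariance and the deterministic bound of Lemma~\ref{lem:elementary-weights} for the Lindeberg condition. In (ii) you use the functional CLT for continuous martingales, with the bracket convergence supplied by Lemma~\ref{lem:weighted-martingale-covariance}. Your Cram\'er--Wold reduction is equivalent to the paper's multivariate Lindeberg--Feller. The Schur-product check that \(C(t)\) has positive semidefinite increments is a harmless extra; it is automatic, since \(C\) is a limit of brackets.
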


\begin{proof}
The initial vector is the normalized sum of independent centered vectors
\[
\frac1{\sqrt N}\sum_{i=1}^N
\left(
h_{n_a}(i/N)
\bigl(\varphi_a(X_0^i)-\langle\bar\rho_0,\varphi_a\rangle\bigr)
\right)_{a=1}^q.
\]
For \(1\le a,c\le q\), its covariance entry is
\[
\left(
\frac1N\sum_{i=1}^N
h_{n_a}(i/N)h_{n_c}(i/N)
\right)
\left[
\langle\bar\rho_0,\varphi_a\varphi_c\rangle
-
\langle\bar\rho_0,\varphi_a\rangle
\langle\bar\rho_0,\varphi_c\rangle
\right],
\]
which converges to the corresponding entry in
\eqref{eq:limit-initial-covariance} by
Lemma~\ref{lem:riemann-weights}. Moreover,
Lemma~\ref{lem:elementary-weights} and the boundedness of the test functions
show that the norm of every normalized summand is bounded by
\[
\frac{C}{\sqrt N}
\max_{1\le i\le N}\max_{1\le a\le q}h_{n_a}(i/N)
\longrightarrow0.
\]
Thus the Lindeberg condition holds, and the multivariate
Lindeberg--Feller theorem proves \textup{(i)}.

For the martingale vector, its bracket matrix is given by
\eqref{eq:finiteN-martingale-covariance} and converges in probability,
uniformly on \([0,T]\), to the deterministic matrix in
\eqref{eq:limit-martingale-covariance} by
Lemma~\ref{lem:weighted-martingale-covariance}. Since the martingales are
continuous, the vector martingale central limit theorem
\cite[Theorem~VIII.3.11]{jacodshiryaev2003limit} proves \textup{(ii)}.
\end{proof}

\section{Conditional replacement and limit identification}
\label{sec:conditional-replacement}
\label{sec:limit-identification}

In this section, we complete the identification of subsequential limits. We
first use conditional replacement to show that the nonlinear remainder
vanishes. We then combine this estimate with the limits established in
Section~\ref{sec:identification-inputs} to derive the equation for every
subsequential limit.

\subsection{Conditional replacement}

Fix \(n\ge0\) and \(\varphi\in C_c^\infty(\R^d)\). For a signed measure
\(\gamma\), set
\[
K_\varphi(x,y):=\nabla\varphi(x)\cdot K(x-y),
\qquad
F_\varphi[\gamma](x)
:=\nabla\varphi(x)\cdot(K*\gamma)(x)
=\int K_\varphi(x,y)\,\gamma(\dd y).
\]
The nonlinear remainder introduced in
\eqref{eq:nonlinear-remainder} is
\[
\mathcal R_s^{N,n}(\varphi)
=
\frac1{\sqrt N}\sum_{i=2}^N h_n(i/N)
\left\langle
\delta_{X_s^i}-\bar\rho_s,
F_\varphi[\mu_s^{i-1}-\bar\rho_s]
\right\rangle.
\]
We show that its time integral converges to zero uniformly on \([0,T]\).

For \(i\ge2\), let \(f_s^j\), \(2\le j\le i\), be the jointly Borel kernels
obtained in Lemma~\ref{lem:paper1-estimates}\textup{(ii)} by disintegrating
\(\dd s\otimes\Law(X_s^{1:j})\), and set
\[
\nu_s^1:=\Law(X_s^1),
\qquad
\nu_s^j(\omega):=f_s^j(X_s^{1:j-1}(\omega),\cdot),
\quad 2\le j\le i,
\]
\[
\bar\nu_s^{i-1}:=\frac1{i-1}\sum_{j<i}\nu_s^j.
\]
The resulting maps \((s,\omega)\mapsto\nu_s^j(\omega)\) are jointly
measurable, and \(\nu_s^j\) is a version of
\(\Law(X_s^j\mid\mathscr G_s^{j-1})\) in the
\(\dd s\otimes\mathbb P\)-a.s.\ sense. Consequently, for a.e.\ \(s\),
\[
  \mathbb E\left[
  \langle \delta_{X_s^i}-\nu_s^i,\psi\rangle
  \mid \mathscr G_s^{i-1}
  \right]=0
\quad\mathbb P\text{-a.s.}
\]
whenever \(\psi\) is \(\mathscr G_s^{i-1}\)-measurable and bounded.
All such identities below are used under time integration.

We split the two fluctuation factors in \(\mathcal R^{N,n}\) as follows:
\begin{equation}\label{eq:conditional-replacement-decomposition}
\delta_{X_s^i}-\bar\rho_s
=
(\delta_{X_s^i}-\nu_s^i)+(\nu_s^i-\bar\rho_s),
\qquad
\mu_s^{i-1}-\bar\rho_s
=
(\mu_s^{i-1}-\bar\nu_s^{i-1})
+(\bar\nu_s^{i-1}-\bar\rho_s).
\end{equation}
In each line, the first difference is built from martingale differences, while
the second is controlled by the conditional entropy estimates in
Lemma~\ref{lem:paper1-estimates}\textup{(iii)}. The next lemma controls the
difference \(\mu_s^{i-1}-\bar\nu_s^{i-1}\). Its first estimate is used when
this difference is paired with
\(\delta_{X_s^i}-\nu_s^i\); its exponential estimate is used when it is paired
with \(\nu_s^i-\bar\rho_s\).

\begin{lemma}\label{lem:conditional-replacement-estimates}
For \(i\ge2\), uniformly for a.e.\ \(s\in[0,T]\),
\[
\mathbb E
|F_\varphi[\mu_s^{i-1}-\bar\nu_s^{i-1}](X_s^i)|^2
\le C_{\varphi,T}(i-1)^{-1}.
\]
Moreover, if
\[
\widetilde K_{\varphi,s}(x,y)
=
K_\varphi(x,y)
-
\int K_\varphi(z,y)\bar\rho_s(\dd z)
\]
and
\[
\widetilde\xi_s^{i-1}(x)
=
\frac1{i-1}\sum_{j<i}
\left(
\widetilde K_{\varphi,s}(x,X_s^j)
-
\int \widetilde K_{\varphi,s}(x,y)\nu_s^j(\dd y)
\right),
\]
then for every \(\lambda\in\R\),
\[
\mathbb E\log
\int e^{\lambda\widetilde\xi_s^{i-1}(x)}\bar\rho_s(\dd x)
\le
C_\varphi\lambda^2(i-1)^{-1}.
\]
\end{lemma}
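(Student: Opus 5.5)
For the first estimate, I will expand the definition and write
\[
F_\varphi[\mu_s^{i-1}-\bar\nu_s^{i-1}](x)
=
\frac1{i-1}\sum_{j<i}D_j(x),
\qquad
D_j(x):=K_\varphi(x,X_s^j)-\int K_\varphi(x,y)\,\nu_s^j(\dd y).
\]
For $j=1$ the kernel is $\nu_s^1=\Law(X_s^1)$, and the conditioning $\sigma$-field $\mathscr G_s^0$ is trivial.

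The difficulty is that the evaluation point $X_s^i$ is itself random and not independent of the $D_j$. I will remove it with a crude uniform bound. Since $K_\varphi$ is bounded, $|F_\varphi[\cdot](X_s^i)|^2\le\sup_x|F_\varphi[\cdot](x)|^2$, but a supremum over $x$ of a martingale sum is not directly $O((i-1)^{-1})$. I will therefore use Fourier/Sobolev control in $x$ instead. The function $x\mapsto\frac1{i-1}\sum_jD_j(x)$ equals $\nabla\varphi(x)\cdot (K*\gamma)(x)$ with $\gamma=\mu_s^{i-1}-\bar\nu_s^{i-1}$. Hence
\[
\sup_x|F_\varphi[\gamma](x)|\le\|\nabla\varphi\|_\infty\,\|K*\gamma\|_\infty
\le C\|K*\gamma\|_{H^{r}},
\qquad r>d/2 ,
\]
and
\[
\|K*\gamma\|_{H^r}^2=\int|\hat K(\xi)|^2\langle\xi\rangle^{2r}|\hat\gamma(\xi)|^2\,\dd\xi .
\]
For each frequency, $\hat\gamma(\xi)=\frac1{i-1}\sum_{j<i}\bigl(e^{-i\xi\cdot X_s^j}-\int e^{-i\xi\cdot y}\nu_s^j(\dd y)\bigr)$ is an average of martingale differences with respect to $(\mathscr G_s^j)_j$, bounded by $2$. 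Orthogonality then gives $\mathbb E|\hat\gamma(\xi)|^2\le 4(i-1)^{-1}$. Integrating against $|\hat K|^2\langle\xi\rangle^{2r}$, which is finite because $K\in H^r$ for all $r$ by the discussion before \eqref{eq:kernel-measure-convolution-bound}, gives the claimed bound $C_{\varphi,T}(i-1)^{-1}$. All of this holds for a.e.\ $s$, using the jointly measurable kernels of Lemma~\ref{lem:paper1-estimates}\textup{(ii)} and the martingale-difference property recorded before \eqref{eq:conditional-replacement-decomposition}.

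For the exponential estimate, I will apply Lemma~\ref{lem:paper1-estimates}\textup{(iv)} with $\phi_s(x,y):=\widetilde K_{\varphi,s}(x,y)$. This function is deterministic, jointly measurable and bounded by $2\|\nabla\varphi\|_\infty\|K\|_\infty$. With the normalization $\frac{\lambda}{i-1}$ built into (iv), and the $j=1$ term centered by $\Law(X_s^1)=\nu_s^1$, the exponent in (iv) is exactly $\lambda\widetilde\xi_s^{i-1}(x)$. This yields
\[
\mathbb E\int e^{\lambda\widetilde\xi_s^{i-1}(x)}\,\bar\rho_s(\dd x)\le\exp\bigl(C_\varphi\lambda^2(i-1)^{-1}\bigr).
\]
Jensen's inequality for the concave logarithm, $\mathbb E\log Z\le\log\mathbb E Z$, then gives the stated bound.

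The main obstacle I anticipate is in the first estimate. The point $X_s^i$ is correlated with the $D_j$, so I cannot simply condition on it; the Sobolev/Fourier sup-norm device above is my way around this. The remaining care is measurability in $s$, which is handled by working for a.e.\ $s$ with the kernels of Lemma~\ref{lem:paper1-estimates}\textup{(ii)}.
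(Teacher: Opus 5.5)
Your proposal is correct. The exponential estimate is proved exactly as in the paper: Lemma~\ref{lem:paper1-estimates}\textup{(iv)} with $\phi_s=\widetilde K_{\varphi,s}$, followed by Jensen.

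For the $L^2$ estimate, however, you take a genuinely different route. Write $\Xi:=F_\varphi[\mu_s^{i-1}-\bar\nu_s^{i-1}]$.

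\emph{The paper's route.} It keeps the evaluation point $X_s^i$ and decouples it by a change of measure. It applies \textup{(iv)} with $\phi_s=K_\varphi$ to get a sub-Gaussian bound for $\Xi(Z_s^i)$, where $Z_s^i\sim\bar\rho_s$ is independent under $Q_s^{1:i}=\Law(X_s^{1:i-1})\otimes\bar\rho_s$. It upgrades this to $\mathbb E_{Q_s^{1:i}}\exp\bigl(c_\varphi(i-1)|\Xi(Z_s^i)|^2\bigr)\le C_\varphi$. It then transfers the bound to the true law via the Donsker--Varadhan inequality and $H(P_s^{1:i}\mid Q_s^{1:i})\le R_i(s)$.

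\emph{Your route.} You remove the evaluation point entirely by bounding $\mathbb E\sup_x|\Xi(x)|^2$. The tools are $H^r\hookrightarrow L^\infty$ with $r>d/2$, Plancherel, Tonelli in $\xi$, and orthogonality at each frequency of the bounded martingale differences $e^{-\mathrm i\xi\cdot X_s^j}-\int e^{-\mathrm i\xi\cdot y}\nu_s^j(\dd y)$. This is rigorous: the disintegration in Lemma~\ref{lem:paper1-estimates}\textup{(ii)} is an equality of measures for a.e.\ $s$, so the exceptional time set does not depend on $\xi$.

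\emph{Comparison.} Your argument is more elementary and gives the stronger uniform-in-$x$ bound. It needs no entropy input, and its constant does not depend on $T$. Its cost is that it relies essentially on the Sobolev regularity $K\in H^r$, $r>d/2$, granted by Assumption~A. The paper's argument uses only the boundedness of $K_\varphi$ plus an $i$-uniform bound on the incremental entropy. It is therefore insensitive to the smoothness of $K$. It also fits the entropy/Donsker--Varadhan framework inherited from the companion work, which the authors suggest extending in Remark~\ref{rem:strong-assumptions}.
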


\begin{proof}
We first derive a sub-Gaussian estimate that will be used for both conclusions.
For a bounded measurable function \(\phi_s(x,y)\), set
\begin{equation*}
\Xi_s^{i-1,\phi}(x)
=
\frac1{i-1}\sum_{j<i}
\left(
\phi_s(x,X_s^j)-\int \phi_s(x,y)\nu_s^j(\dd y)
\right).
\end{equation*}
The summands are bounded martingale differences with respect to
\((\mathscr G_s^j)_{j<i}\), so
Lemma~\ref{lem:paper1-estimates}\textup{(iv)} gives, for every
\(\lambda\in\R\),
\begin{equation}\label{eq:conditional-xi-exponential}
\mathbb E\int
\exp(\lambda\Xi_s^{i-1,\phi}(x))\bar\rho_s(\dd x)
\le
\exp\left(C_\phi\lambda^2/(i-1)\right),
\end{equation}

\smallskip
\noindent\emph{The \(L^2\) estimate.}
Take \(\phi_s=K_\varphi\) in \eqref{eq:conditional-xi-exponential}. By
definition,
\[
\Xi_s^{i-1,K_\varphi}(x)
=
F_\varphi[\mu_s^{i-1}-\bar\nu_s^{i-1}](x).
\]
To evaluate this predecessor fluctuation at the correlated point \(X_s^i\), set
\[
P_s^{1:i}=\Law(X_s^1,\ldots,X_s^i),
\qquad
Q_s^{1:i}=\Law(X_s^1,\ldots,X_s^{i-1})\otimes\bar\rho_s.
\]
Under \(Q_s^{1:i}\), let \(Z_s^i\) denote the last coordinate. It has law
\(\bar\rho_s\) and is independent of the predecessors. Thus the centered random
variable
\(\Xi_s^{i-1,K_\varphi}(Z_s^i)\) satisfies the sub-Gaussian MGF estimate
\eqref{eq:conditional-xi-exponential}. By the standard equivalence of
sub-Gaussian estimates \cite[Proposition~2.5.2]{vershynin2018highdimensional},
there are constants \(c_\varphi,C_\varphi>0\) such that
\begin{equation*}
\mathbb E_{Q_s^{1:i}}
\exp\left(
c_\varphi(i-1)
\left|
\Xi_s^{i-1,K_\varphi}(Z_s^i)
\right|^2
\right)
\le C_\varphi.
\end{equation*}
The true law is compared with this decoupled law by data processing:
\begin{equation}\label{eq:conditional-data-processing}
H(P_s^{1:i}\mid Q_s^{1:i})\le R_i(s).
\end{equation}
The exponential estimate and the relative-entropy variational inequality
\cite[Proposition~1.4.2]{dupuis2011weakbook} give
\begin{equation}\label{eq:conditional-dv-square-bound}
c_\varphi(i-1)
\mathbb E_{P_s^{1:i}}
\left|\Xi_s^{i-1,K_\varphi}(X_s^i)\right|^2
\le
H(P_s^{1:i}\mid Q_s^{1:i})+\log C_\varphi .
\end{equation}
By \eqref{eq:conditional-data-processing} and
Lemma~\ref{lem:paper1-estimates}\textup{(i)},
\(H(P_s^{1:i}\mid Q_s^{1:i})\le R_i(s)\le C_T(i-1)^{-1}\), and
\eqref{eq:conditional-dv-square-bound}, together with the identity above,
yields
\[
\mathbb E
|F_\varphi[\mu_s^{i-1}-\bar\nu_s^{i-1}](X_s^i)|^2
\le C_{\varphi,T}(i-1)^{-1}.
\]

\smallskip
\noindent\emph{The centered exponential estimate.}
Take \(\phi_s=\widetilde K_{\varphi,s}\) in
\eqref{eq:conditional-xi-exponential}. Since
\[
\widetilde\xi_s^{i-1}(x)
=
\Xi_s^{i-1,\widetilde K_{\varphi,s}}(x),
\]
Jensen's inequality and \eqref{eq:conditional-xi-exponential} give
\[
\mathbb E\log
\int e^{\lambda\widetilde\xi_s^{i-1}(x)}\bar\rho_s(\dd x)
\le
\log\mathbb E
\int e^{\lambda\widetilde\xi_s^{i-1}(x)}\bar\rho_s(\dd x)
\le
C_\varphi\lambda^2(i-1)^{-1}.
\]
\end{proof}

Combining Lemma~\ref{lem:conditional-replacement-estimates} with the estimates
in Lemma~\ref{lem:paper1-estimates}\textup{(iii)}, we now prove that the full
nonlinear remainder vanishes.

\begin{proposition}\label{prop:weighted-conditional-measure-replacement}
Fix \(n\ge0\) and let \(\varphi\in C_c^\infty(\R^d)\). Then, as
\(N\to\infty\),
\[
  \mathbb E\sup_{0\le t\le T}
  \left|
  \int_0^t \mathcal R_s^{N,n}(\varphi)\,\dd s
  \right|
  \to0
\]
and hence \(\int_0^\cdot \mathcal R_s^{N,n}(\varphi)\,\dd s\to0\) in probability in
\(C([0,T];\R)\).
\end{proposition}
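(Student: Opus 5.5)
We bound the $L^1$ norm directly. Since $\sup_{t\le T}\bigl|\int_0^t\mathcal R_s^{N,n}(\varphi)\,\dd s\bigr|\le\int_0^T|\mathcal R_s^{N,n}(\varphi)|\,\dd s$, it suffices to show $\mathbb E|\mathcal R_s^{N,n}(\varphi)|\to0$ uniformly for a.e.\ $s$. Convergence in probability in $C([0,T];\R)$ then follows from Markov's inequality.

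Using \eqref{eq:conditional-replacement-decomposition}, we expand each summand $\langle\delta_{X_s^i}-\bar\rho_s,F_\varphi[\mu_s^{i-1}-\bar\rho_s]\rangle$ into four terms:
\[
T_1^i=\langle\delta_{X_s^i}-\nu_s^i,F_\varphi[\mu_s^{i-1}-\bar\nu_s^{i-1}]\rangle,\qquad
T_2^i=\langle\delta_{X_s^i}-\nu_s^i,F_\varphi[\bar\nu_s^{i-1}-\bar\rho_s]\rangle,
\]
\[
T_3^i=\langle\nu_s^i-\bar\rho_s,F_\varphi[\mu_s^{i-1}-\bar\nu_s^{i-1}]\rangle,\qquad
T_4^i=\langle\nu_s^i-\bar\rho_s,F_\varphi[\bar\nu_s^{i-1}-\bar\rho_s]\rangle.
\]

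\emph{Terms $T_1$ and $T_2$.} In both, the test function $F_\varphi[\cdots]$ is $\mathscr G_s^{i-1}$-measurable and bounded by $2\|\nabla\varphi\|_\infty\|K\|_\infty$. Hence $T_1^i$ and $T_2^i$ are martingale differences in $i$ with respect to $(\mathscr G_s^i)$. By orthogonality,
\[
\mathbb E\Bigl|\frac1{\sqrt N}\sum_{i=2}^N h_n(i/N)T_1^i\Bigr|^2\le\frac1N\sum_{i=2}^N h_n(i/N)^2\,\mathbb E|T_1^i|^2 .
\]
We bound $\mathbb E|T_1^i|^2\le 2\mathbb E|F_\varphi[\mu_s^{i-1}-\bar\nu_s^{i-1}](X_s^i)|^2+2\mathbb E\langle\nu_s^i,|F_\varphi[\cdot]|^2\rangle$. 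The first piece is $\le C(i-1)^{-1}$ by Lemma~\ref{lem:conditional-replacement-estimates}. The second equals the first, since $\nu_s^i$ is the conditional law of $X_s^i$, so that $\mathbb E\langle\nu_s^i,g\rangle=\mathbb E g(X_s^i)$ for $\mathscr G^{i-1}$-measurable $g$. The resulting sum is $\frac1N\sum_i h_n^2/(i-1)\to0$ by Lemma~\ref{lem:polylog-summability}. For $T_2$ we similarly use $|F_\varphi[\bar\nu_s^{i-1}-\bar\rho_s]|\le C\|\bar\nu_s^{i-1}-\bar\rho_s\|_{\mathrm{TV}}$ and the averaged TV estimate of Lemma~\ref{lem:paper1-estimates}(iii), which again gives the bound $\frac1N\sum h_n^2/(i-1)\to0$.

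\emph{Term $T_4$.} This term is bounded by $C\|\nu_s^i-\bar\rho_s\|_{\mathrm{TV}}\|\bar\nu_s^{i-1}-\bar\rho_s\|_{\mathrm{TV}}$. By Cauchy--Schwarz and Lemma~\ref{lem:paper1-estimates}(iii), $\mathbb E|T_4^i|\le C(i-1)^{-1}$. The weighted sum is therefore at most $\frac{C}{\sqrt N}\sum h_n(i/N)/(i-1)\to0$ by Lemma~\ref{lem:polylog-summability}.

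\emph{Term $T_3$: the main obstacle.} The naive bound $\|\nu_s^i-\bar\rho_s\|_{\mathrm{TV}}\cdot\sup_x|F_\varphi[\mu^{i-1}-\bar\nu^{i-1}](x)|$ fails, because the supremum over $x$ is not $O((i-1)^{-1/2})$ without a union bound. We handle it with the entropy (Donsker--Varadhan) inequality in $x$. First note that
\[
\langle\nu_s^i-\bar\rho_s,F_\varphi[\mu_s^{i-1}-\bar\nu_s^{i-1}]\rangle=\langle\nu_s^i-\bar\rho_s,\widetilde\xi_s^{i-1}\rangle ,
\]
since subtracting the $x$-independent centering in $\widetilde K_{\varphi,s}$ leaves the pairing with $\nu_s^i-\bar\rho_s$ unchanged. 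It then equals $\int\widetilde\xi_s^{i-1}\,\dd\nu_s^i-\int\widetilde\xi_s^{i-1}\,\dd\bar\rho_s$. For any $\lambda>0$, the variational inequality gives
\[
\lambda\langle\nu_s^i,\widetilde\xi\rangle\le H(\nu_s^i\mid\bar\rho_s)+\log\int e^{\lambda\widetilde\xi}\,\dd\bar\rho_s ,
\]
and we subtract $\lambda\langle\bar\rho_s,\widetilde\xi\rangle$, which is bounded similarly or is small in $L^2$ by the martingale-difference structure. We apply the same argument to $-\widetilde\xi$ and take expectations, using Lemma~\ref{lem:conditional-replacement-estimates} and $\mathbb EH(\nu_s^i\mid\bar\rho_s)\le C(i-1)^{-1}$. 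This yields
\[
\mathbb E|T_3^i|\le\frac{C}{\lambda(i-1)}+\frac{C\lambda}{i-1}+\mathbb E|\langle\bar\rho_s,\widetilde\xi\rangle| .
\]
The last quantity is $O((i-1)^{-1/2})$, which is insufficient on its own. To improve it, we use that $\widetilde K$ is centered in $x$ against $\bar\rho_s$, so $\langle\bar\rho_s,\widetilde\xi_s^{i-1}\rangle=0$ identically. This is exactly why the centered kernel was introduced. Choosing $\lambda=1$ gives $\mathbb E|T_3^i|\le C(i-1)^{-1}$, and the weighted sum vanishes by Lemma~\ref{lem:polylog-summability}.

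Finally, all bounds are uniform in a.e.\ $s$, so integrating over $[0,T]$ concludes the proof.
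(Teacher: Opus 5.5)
Your proposal is correct and follows the paper's proof essentially verbatim. It uses the same four-term split ($A,B,C,D$ in the paper, your $T_1,\dots,T_4$), orthogonality of the conditionally centered terms for $T_1,T_2$, Cauchy--Schwarz with the total-variation bounds for $T_4$, and, for the cross term $T_3$, the Donsker--Varadhan inequality with the $\bar\rho_s$-centered kernel, so that $\langle\bar\rho_s,\widetilde\xi_s^{i-1}\rangle=0$, together with the choice $\lambda=1$. One step should be stated explicitly: to combine the $\pm\lambda$ bounds into a bound on $|T_3^i|$, you need both logarithmic terms to be nonnegative, which holds by Jensen's inequality because of that same centering.
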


\begin{proof}
Set
\[
\chi_s^i:=\delta_{X_s^i}-\nu_s^i,\quad
\theta_s^i:=\nu_s^i-\bar\rho_s,\quad
\zeta_s^{i-1}:=\mu_s^{i-1}-\bar\nu_s^{i-1},\quad
\omega_s^{i-1}:=\bar\nu_s^{i-1}-\bar\rho_s .
\]
By bilinearity,
\[
\mathcal R_s^{N,n}(\varphi)
=A_s^{N,n}(\varphi)+B_s^{N,n}(\varphi)
+C_s^{N,n}(\varphi)+D_s^{N,n}(\varphi),
\]
where
\begin{align*}
A_s^{N,n}(\varphi)
&:=\frac1{\sqrt N}\sum_{i=2}^N h_n(i/N)
\left\langle\chi_s^i,F_\varphi[\zeta_s^{i-1}]\right\rangle,\\
B_s^{N,n}(\varphi)
&:=\frac1{\sqrt N}\sum_{i=2}^N h_n(i/N)
\left\langle\chi_s^i,F_\varphi[\omega_s^{i-1}]\right\rangle,\\
C_s^{N,n}(\varphi)
&:=\frac1{\sqrt N}\sum_{i=2}^N h_n(i/N)
\left\langle\theta_s^i,F_\varphi[\zeta_s^{i-1}]\right\rangle,\\
D_s^{N,n}(\varphi)
&:=\frac1{\sqrt N}\sum_{i=2}^N h_n(i/N)
\left\langle\theta_s^i,F_\varphi[\omega_s^{i-1}]\right\rangle.
\end{align*}
The terms \(A\) and \(B\) contain the conditionally centered measure
\(\chi_s^i\), so they are controlled in \(L^2\) by orthogonality in the
particle index. The term \(C\) pairs \(\theta_s^i\) with the predecessor
fluctuation and requires the exponential estimate of
Lemma~\ref{lem:conditional-replacement-estimates}. The term \(D\) is controlled
directly by the total-variation estimates in
Lemma~\ref{lem:paper1-estimates}\textup{(iii)}.

\smallskip
\noindent\emph{The terms \(A\) and \(B\).}
The functions \(F_\varphi[\zeta_s^{i-1}]\) and
\(F_\varphi[\omega_s^{i-1}]\) are bounded and
\(\mathscr G_s^{i-1}\)-measurable. For any such random function \(f\), the
conditional-law identity gives
\[
\mathbb E\left[
\left|\langle\delta_{X_s^i}-\nu_s^i,f\rangle\right|^2
\,\middle|\,\mathscr G_s^{i-1}
\right]
\le
\int |f(x)|^2\nu_s^i(\dd x).
\]
Consequently, Lemma~\ref{lem:conditional-replacement-estimates} gives
\[
\mathbb E
\left|\left\langle
\chi_s^i,F_\varphi[\zeta_s^{i-1}]
\right\rangle\right|^2
\le C_{\varphi,T}(i-1)^{-1}.
\]
Moreover, the conditional-law bound above and
\(\|F_\varphi[\omega]\|_\infty
\le \|K_\varphi\|_\infty\|\omega\|_{\mathrm{TV}}\) give
\[
\begin{aligned}
\mathbb E
\left|\left\langle
\chi_s^i,F_\varphi[\omega_s^{i-1}]
\right\rangle\right|^2
&\le
\mathbb E\int
|F_\varphi[\omega_s^{i-1}](x)|^2\nu_s^i(\dd x)\\
&\le
\|K_\varphi\|_\infty^2
\mathbb E\|\omega_s^{i-1}\|_{\mathrm{TV}}^2
\le C_{\varphi,T}(i-1)^{-1}.
\end{aligned}
\]
where the last inequality is Lemma~\ref{lem:paper1-estimates}\textup{(iii)}.
The summands in both \(A\) and \(B\) are martingale differences in \(i\).
Orthogonality therefore yields
\[
\mathbb E|A_s^{N,n}|^2+\mathbb E|B_s^{N,n}|^2
\le
\frac{C_{\varphi,T}}N
\sum_{i=2}^N\frac{h_n(i/N)^2}{i-1}.
\]
By the Cauchy--Schwarz inequality in time and the first estimate in
Lemma~\ref{lem:polylog-summability},
\[
\mathbb E\int_0^T
\bigl(|A_s^{N,n}|+|B_s^{N,n}|\bigr)\,\dd s
\xrightarrow[N\to\infty]{}0.
\]

\smallskip
\noindent\emph{The term \(C\).}
Let \(\xi_s^{i-1}=F_\varphi[\zeta_s^{i-1}]\) and center it by
\[
\widetilde\xi_s^{i-1}:=\xi_s^{i-1}-\langle\bar\rho_s,\xi_s^{i-1}\rangle.
\]
This is the centered predecessor fluctuation associated with
\(\widetilde K_{\varphi,s}\) in Lemma~\ref{lem:conditional-replacement-estimates}.
It is bounded and \(\mathscr G_s^{i-1}\)-measurable.
Then the \(i\)-th summand is
\[
\langle\theta_s^i,F_\varphi[\zeta_s^{i-1}]\rangle
=
\langle\nu_s^i,\widetilde\xi_s^{i-1}\rangle.
\]
For a.e.\ \(s\), condition on \(\mathscr G_s^{i-1}\). Applying the
variational inequality for relative entropy to \(\nu_s^i\) and
\(\bar\rho_s\), with test functions
\(\pm\lambda\widetilde\xi_s^{i-1}\), gives, for every \(\lambda>0\),
\[
\pm\langle\nu_s^i,\widetilde\xi_s^{i-1}\rangle
\le
\lambda^{-1}H(\nu_s^i\mid\bar\rho_s)
+
\lambda^{-1}
\log\int e^{\pm\lambda\widetilde\xi_s^{i-1}(x)}\bar\rho_s(\dd x).
\]
Since \(\langle\bar\rho_s,\widetilde\xi_s^{i-1}\rangle=0\), both logarithmic
terms are nonnegative by Jensen's inequality. Combining the two signs gives
\[
\begin{aligned}
\left|\langle\nu_s^i,\widetilde\xi_s^{i-1}\rangle\right|
&\le
\lambda^{-1}H(\nu_s^i\mid\bar\rho_s)
 +\lambda^{-1}
 \log\int e^{\lambda\widetilde\xi_s^{i-1}(x)}\bar\rho_s(\dd x)\\
&\quad
 +\lambda^{-1}
 \log\int e^{-\lambda\widetilde\xi_s^{i-1}(x)}\bar\rho_s(\dd x).
\end{aligned}
\]
Taking expectations, using
Lemma~\ref{lem:paper1-estimates}\textup{(iii)} and
Lemma~\ref{lem:conditional-replacement-estimates} with \(\lambda\) and
\(-\lambda\), and then choosing \(\lambda=1\), yields
\[
\mathbb E
\left|
\langle\theta_s^i,F_\varphi[\zeta_s^{i-1}]\rangle
\right|
\le C_{\varphi,T}(i-1)^{-1}.
\]
Hence, Lemma~\ref{lem:polylog-summability}, with \(\ell=n\), gives
\[
\mathbb E\int_0^T |C_s^{N,n}|\,\dd s
\le
\frac{C_{\varphi,T}}{\sqrt N}
\sum_{i=2}^N\frac{h_n(i/N)}{i-1}
\xrightarrow[N\to\infty]{}0.
\]

\smallskip
\noindent\emph{The term \(D\).}
Since
\[
\left|
\langle\theta_s^i,F_\varphi[\omega_s^{i-1}]\rangle
\right|
\le
C_\varphi
\|\theta_s^i\|_{\mathrm{TV}}
\|\omega_s^{i-1}\|_{\mathrm{TV}},
\]
the Cauchy--Schwarz inequality and
Lemma~\ref{lem:paper1-estimates}\textup{(iii)} give
\[
\mathbb E
\left|
\langle\theta_s^i,F_\varphi[\omega_s^{i-1}]\rangle
\right|
\le
C_{\varphi,T}\frac1{i-1}.
\]
The same logarithmic summability estimate gives
\[
\mathbb E\int_0^T |D_s^{N,n}|\,\dd s
\le
\frac{C_{\varphi,T}}{\sqrt N}
\sum_{i=2}^N
\frac{h_n(i/N)}{i-1}
\xrightarrow[N\to\infty]{}0.
\]

\smallskip
\noindent\emph{Conclusion.}
Combining the four estimates gives
\[
\mathbb E\int_0^T
\left(
|A_s^{N,n}|+|B_s^{N,n}|+|C_s^{N,n}|+|D_s^{N,n}|
\right)\,\dd s
\xrightarrow[N\to\infty]{}0.
\]
Since
\[
\sup_{0\le t\le T}
\left|\int_0^t \mathcal R_s^{N,n}(\varphi)\,\dd s\right|
\le
\int_0^T
\left(
|A_s^{N,n}|+|B_s^{N,n}|+|C_s^{N,n}|+|D_s^{N,n}|
\right)\,\dd s,
\]
this completes the proof.
\end{proof}

\subsection{Identification of subsequential limits}

After removing the nonlinear remainder with
Proposition~\ref{prop:weighted-conditional-measure-replacement}, we identify
the full interaction drift and pass the finite-\(N\) equation to the limit.

\begin{proposition}\label{prop:subsequential-limit-equation}
Let
\[
\bigl((Y^n)_{n\ge0},(\mathcal M^n)_{n\ge0}\bigr)
\]
be an \(E^{\mathbb N_0}\times E^{\mathbb N_0}\)-valued random element whose
law is a subsequential weak limit of the laws in
Proposition~\ref{prop:weighted-tightness}. Then, almost surely, for every
\(n\ge0\), every \(t\in[0,T]\), and every
\(\varphi\in C_c^\infty(\R^d)\),
\begin{align}
\langle Y_t^n,\varphi\rangle
&=
\langle Y_0^n,\varphi\rangle
+
\int_0^t
\langle Y_s^n,L_s\varphi\rangle\,\dd s\notag\\
&\quad+
\int_0^t
\left\langle
\bar\rho_s,
(K*Y_s^{n+1})\cdot\nabla\varphi
\right\rangle\,\dd s
+
M_t^n(\varphi),
\label{eq:subsequential-limit-equation}
\end{align}
where \(M_t^n(\varphi):=\langle\mathcal M_t^n,\varphi\rangle\).
With respect to the usual augmentation of the natural filtration generated by
\((Y,\mathcal M)\), each \(M^n(\varphi)\) is a continuous martingale.
\end{proposition}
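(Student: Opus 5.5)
We work on the Skorokhod space of Corollary~\ref{cor:subsequential-skorokhod}: along a subsequence \(N_j\), the representatives \((\widetilde Y^{N_j,n},\widetilde{\mathcal M}^{N_j,n})\) converge almost surely in \(E^{\mathbb N_0}\times E^{\mathbb N_0}\) to a copy of \((Y,\mathcal M)\). Since the statement concerns only the law of \((Y,\mathcal M)\), it suffices to prove it for this copy. We first verify that \(\widetilde{\mathcal M}^{N_j,n}\) plays the role of the martingale term. For fixed \(n\) and \(\varphi\), the finite-\(N\) identity \eqref{eq:finiteN-martingale-formulation} together with the splitting \(I=I_0+\mathcal R\) shows that
\[
\Psi^{N,n}_t(\varphi):=\langle Y_t^{N,n},\varphi\rangle-\langle Y_0^{N,n},\varphi\rangle-\int_0^t\langle Y_s^{N,n},L_s\varphi\rangle\dd s-\int_0^t\langle Y_s^{N,n+1},\Gamma_s^\varphi\rangle\dd s-\langle\mathcal M_t^{N,n},\varphi\rangle
\]
equals \(\int_0^t(I_{s,0}^{N,n}-\langle Y^{N,n+1}_s,\Gamma^\varphi_s\rangle)\dd s+\int_0^t\mathcal R_s^{N,n}\dd s+\int_0^t\mathcal B_s^{N,n}\dd s\). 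By Lemma~\ref{lem:leading-limit}, Proposition~\ref{prop:weighted-conditional-measure-replacement}, and \eqref{eq:first-particle-error-vanishes}, we have \(\mathbb E\sup_t|\Psi_t^{N,n}(\varphi)|\to0\). The map \(\Psi\) is a measurable functional of \((Y^{N,n},Y^{N,n+1},\mathcal M^{N,n})\), so the same convergence holds for the Skorokhod copies.

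Next we pass to the limit in \(\Psi\). The terms \(\langle z_t,\varphi\rangle\), \(\langle z_0,\varphi\rangle\), and \(\langle m_t,\varphi\rangle\) are continuous on \(E\) uniformly in \(t\), because \(\varphi\in C_c^\infty\subset H^{\beta,m}\). The same holds for \(\int_0^t\langle z_s,L_s\varphi\rangle\dd s\), since \(L_s\varphi\in C_c^\infty\) depends continuously on \(s\) in \(H^{\beta,m}\) (\(\bar\rho_s\) is weakly continuous and \(K\) is smooth). The term \(\int_0^t\langle z_s,\Gamma^\varphi_s\rangle\dd s\) is not continuous on \(E\), because \(\Gamma^\varphi\) is not compactly supported. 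We handle it exactly as in the proof of Lemma~\ref{lem:leading-limit}: truncate with \(\chi_R\), use continuity for each fixed \(R\), and control the tail uniformly in \(N\) by the \(H^{-\beta_\ast}\) bound \eqref{eq:weighted-apriori-bound} and by Corollary~\ref{cor:subsequential-skorokhod}(iii) for the limit. Consequently, \(\sup_t|\Psi_t^n(\varphi)|=0\) almost surely for the limit. The interaction term is then rewritten as \(\langle\bar\rho_s,(K*Y^{n+1}_s)\cdot\nabla\varphi\rangle\) using the duality identity in Lemma~\ref{lem:leading-limit}. To obtain a single null set for all \(n\), \(\varphi\), and \(t\), we restrict to a countable subset of \(C_c^\infty\) that is dense in \(C^2\) with supports in a common compact set. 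Both sides of \eqref{eq:subsequential-limit-equation} are continuous in \(\varphi\) in this topology: each is controlled by \(\sup_t\|Y_t\|_{H^{-\beta,-m}}\) and the \(H^{\beta,m}\)-norms of \(\varphi\), \(L_s\varphi\), and \(\Gamma^\varphi_s\). This extends the identity to all \(\varphi\).

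The martingale property is the step requiring the most care. For \(0\le s<t\), a bounded continuous functional \(G\) of \((Y,\mathcal M)\) restricted to \([0,s]\) (depending on finitely many coordinates), and finite \(N\), we have \(\mathbb E[(M_t^{N,n}(\varphi)-M_s^{N,n}(\varphi))G(Y^N,\mathcal M^N)]=0\). This holds because \(M^{N,n}(\varphi)\) is a martingale in the particle filtration, to which the restricted paths are adapted. To pass to the limit, we need uniform integrability of \(M_t^{N,n}(\varphi)\). This follows from the quadratic-variation formula \eqref{eq:finiteN-martingale-covariance} together with Lemma~\ref{lem:product-weight-bounds}, which give \(\sup_N\mathbb E|M_t^{N,n}(\varphi)|^2<\infty\). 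Hence \(\mathbb E[(M_t^n-M_s^n)G]=0\) in the limit, so \(M^n(\varphi)\) is a martingale with respect to the natural filtration of \((Y,\mathcal M)\). Continuity and uniform integrability then transfer the property to the usual augmentation, via right limits in \(s\) and a monotone class argument.
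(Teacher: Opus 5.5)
Your proposal is correct and follows essentially the same route as the paper's proof. The shared steps are: the Skorokhod representation; transfer of the vanishing defect (leading-interaction replacement, conditional replacement, boundary term) by equality of laws; the $\chi_R$-localization of the $\Gamma^\varphi$ term with uniform $H^{-\beta_\ast}$ tail control; a countable-density argument in $\varphi$; and the martingale property via bounded continuous functionals of the stopped paths together with uniform $L^2$ bounds.

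One minor fix is needed in the density step. Since $\beta>2$, the countable set must be dense in a high-order $C^k$ topology (not $C^2$), with supports in a common compact set. Also, the interaction term should be bounded by $\sup_t\|Y_t^{n+1}\|_{H^{-\beta_\ast}}\|K\|_{H^{\beta_\ast}}\|\nabla\varphi\|_\infty$, not by an $H^{\beta,m}$ norm of $\Gamma_s^\varphi$, which need not be finite.
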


\begin{proof}
By the definition of the subsequential limit, choose \(N_j\to\infty\) such
that the joint laws of
\(\bigl((Y^{N_j,n})_{n\ge0},(\mathcal M^{N_j,n})_{n\ge0}\bigr)\)
converge weakly to the law of
\(\bigl((Y^n)_{n\ge0},(\mathcal M^n)_{n\ge0}\bigr)\). Apply
Corollary~\ref{cor:subsequential-skorokhod} to \((N_j)_{j\ge1}\). On a new
stochastic basis,
\[
\left(
(\widetilde Y^{N_j,n})_{n\ge0},
(\widetilde{\mathcal M}^{N_j,n})_{n\ge0}
\right)
\longrightarrow
\left(
(\widetilde Y^n)_{n\ge0},
(\widetilde{\mathcal M}^n)_{n\ge0}
\right)
\]
almost surely in \(E^{\mathbb N_0}\times E^{\mathbb N_0}\). The limiting
random element has the same law as
\(\bigl((Y^n)_{n\ge0},(\mathcal M^n)_{n\ge0}\bigr)\), so it is enough to prove
the equation for the tilded processes.

Fix \(n\ge0\) and \(\varphi\in C_c^\infty(\R^d)\). We pass to the limit
term by term in \eqref{eq:finiteN-martingale-formulation}. Since
\(I^{N,n}=I_0^{N,n}+\mathcal R^{N,n}\),
\eqref{eq:leading-interaction-approximation},
Proposition~\ref{prop:weighted-conditional-measure-replacement}, and
\eqref{eq:first-particle-error-vanishes} give
\[
\mathbb E\sup_{t\le T}
\left|
\int_0^t
\left(I_s^{N,n}(\varphi)+\mathcal B_s^{N,n}(\varphi)\right)\,\dd s
-
\int_0^t
\langle Y_s^{N,n+1},\Gamma_s^\varphi\rangle\,\dd s
\right|
\longrightarrow0.
\]
Thus the interaction and boundary terms in
\eqref{eq:finiteN-martingale-formulation} may be replaced by the last
integral, with an error converging to zero in
\(L^1(\Omega;C([0,T];\R))\). By equality of laws, the same asymptotic
identity holds for the realized coordinates.

The convergence in \eqref{eq:skorokhod-countable-convergence} directly gives
the limits of the field and martingale terms. Since
\(s\mapsto L_s\varphi\) is continuous in \(H^{\beta,m}\), it also gives
\[
\int_0^\cdot
\langle\widetilde Y_s^{N_j,n},L_s\varphi\rangle\,\dd s
\longrightarrow
\int_0^\cdot
\langle\widetilde Y_s^n,L_s\varphi\rangle\,\dd s
\quad\text{almost surely in }C([0,T];\R).
\]
The interaction functional is the only term that requires localization.
Applying the localization argument from the proof of
Lemma~\ref{lem:leading-limit} to
\eqref{eq:skorokhod-countable-convergence} gives
\[
\int_0^\cdot
\langle\widetilde Y_s^{N_j,n+1},\Gamma_s^\varphi\rangle\,\dd s
\longrightarrow
\int_0^\cdot
\langle\widetilde Y_s^{n+1},\Gamma_s^\varphi\rangle\,\dd s
\quad\text{in probability in }C([0,T];\R).
\]
Passing to the limit in \eqref{eq:finiteN-martingale-formulation} and using
\[
\langle\gamma,\Gamma_s^\varphi\rangle
=
\left\langle
\bar\rho_s,(K*\gamma)\cdot\nabla\varphi
\right\rangle
\]
proves \eqref{eq:subsequential-limit-equation} for the fixed \(n\) and
\(\varphi\), almost surely for all \(t\in[0,T]\). Since \(n\) ranges over a
countable set, \(C_c^\infty(\R^d)\) is separable, and every term in
\eqref{eq:subsequential-limit-equation} is continuous in \(\varphi\), the
identity holds almost surely for all \(n\ge0\) and all
\(\varphi\in C_c^\infty(\R^d)\).

Finally, let \((\widetilde{\mathcal F}_t)_{t\in[0,T]}\) be the usual
augmentation of the natural filtration generated by
\((\widetilde Y,\widetilde{\mathcal M})\). The limiting processes are then
adapted. For \(0\le s<t\), any bounded continuous cylinder functional \(F\) of
the coordinates up to time \(s\), \(n\ge0\), and
\(\varphi\in C_c^\infty(\R^d)\), equality of laws, the finite-\(N\) martingale
property, \eqref{eq:skorokhod-countable-convergence}, and the uniform
\(L^2\)-bounds from the proof of Proposition~\ref{prop:weighted-tightness} give
\[
\begin{aligned}
&\widetilde{\mathbb E}\!\left[
\bigl(\widetilde M_t^n(\varphi)-\widetilde M_s^n(\varphi)\bigr)
F(\widetilde Y_{\cdot\wedge s},\widetilde{\mathcal M}_{\cdot\wedge s})
\right]\\
&\quad=
\lim_{j\to\infty}\widetilde{\mathbb E}\!\left[
\bigl(\widetilde M_t^{N_j,n}(\varphi)-
\widetilde M_s^{N_j,n}(\varphi)\bigr)
F(\widetilde Y^{N_j}_{\cdot\wedge s},
\widetilde{\mathcal M}^{N_j}_{\cdot\wedge s})
\right]
=0.
\end{aligned}
\]
A monotone-class argument shows that \(\widetilde M^n(\varphi)\) is an
\((\widetilde{\mathcal F}_t)\)-martingale.
This completes the proof.
\end{proof}

\section{Uniqueness and proof of the main result}\label{sec:uniqueness-main-result}

In this section, we prove uniqueness for the limiting hierarchy and complete
the proof of the main result. We use the mild form of the homogeneous
hierarchy to prove pathwise uniqueness and then apply the Yamada--Watanabe
theorem to obtain a strong solution. Finally, uniqueness implies convergence
of the full sequence.

Consider probabilistically weak solutions in the sense of
Definition~\ref{def:log-hierarchy-solution}, with the initial law prescribed in
Theorem~\ref{thm:log-hierarchy}.

\begin{proposition}\label{prop:weighted-uniqueness}
Pathwise uniqueness holds for probabilistically weak solutions of the limiting
logarithmic hierarchy in the sense of
Definition~\ref{def:log-hierarchy-solution}. Furthermore, the limiting
logarithmic hierarchy is strongly well posed.
\end{proposition}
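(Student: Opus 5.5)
Pathwise uniqueness will come from the linear structure of the hierarchy. Take two weak solutions \((Y,\mathcal M)\) and \((Y',\mathcal M)\) on the same stochastic basis with the same initial hierarchy \((Y_0^n)\) and the same noise \(\mathcal M\). Their difference \(Z^n:=Y^n-Y'^n\) then solves the homogeneous deterministic hierarchy
\[
\langle Z_t^n,\varphi\rangle=\int_0^t\langle Z_s^n,L_s\varphi\rangle\,\dd s+\int_0^t\langle \bar\rho_s,(K*Z_s^{n+1})\cdot\nabla\varphi\rangle\,\dd s
\]
pathwise, with \(Z_0^n=0\). The bound \eqref{eq:solution-class-summability} holds for both solutions, so \(\sum_n q^n\mathbb E\sup_t\|Z^n_t\|_{H^{-\beta_\ast}}^2<\infty\). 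The goal is to prove \(Z^n\equiv0\) for every \(n\).

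The first step is to pass to a mild (Duhamel) formulation. Let \(P_{s,t}\) be the forward evolution family generated by \(L_t^*\); this is the Fokker--Planck propagator with smooth bounded drift \(b+K*\bar\rho_t\). By standard parabolic estimates it is bounded on \(H^{-\beta_\ast}\), uniformly for \(0\le s\le t\le T\). Testing the weak equation against \(\varphi_s=P^*_{s,t}\varphi\), which is the solution of the backward equation \(\partial_s\varphi_s+L_s\varphi_s=0\) with \(\varphi_t=\varphi\), gives
\[
Z_t^n=\int_0^t P_{s,t}\,\mathcal K_s Z_s^{n+1}\,\dd s,\qquad \mathcal K_s z:=-\nabla\cdot\bigl(\bar\rho_s(K*z)\bigr).
\]
The operator \(\mathcal K_s\) maps \(H^{-\beta_\ast}\) into \(H^{-\beta_\ast}\) with norm bounded by a constant \(C_K\) uniform in \(s\). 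This holds because \(K*z\in C_b^\infty\) with \(\|K*z\|_{C^r_b}\le C\|K\|_{H^{r+\beta_\ast}}\|z\|_{H^{-\beta_\ast}}\), so \(\bar\rho_s(K*z)\) is a finite measure and its divergence lies in \(H^{-\beta_\ast}\) since \(\beta_\ast>d/2+1\). Justifying the mild form rigorously, with the regularity of the dual test functions \(P^*_{s,t}\varphi\) and the time-dependent testing, is routine but fiddly. I expect it to be the main technical step.

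With \(a_n(t):=\sup_{r\le t}\|Z_r^n\|_{H^{-\beta_\ast}}\) and \(C:=\sup\|P_{s,t}\|\,C_K\), the mild form gives \(a_n(t)\le C\int_0^t a_{n+1}(s)\,\dd s\). This is a Volterra inequality that shifts the index upward. Iterating it \(k\) times gives
\[
a_n(T)\le \frac{(CT)^k}{k!}\,a_{n+k}(T).
\]
Taking expectations of squares, with Cauchy--Schwarz in the iterated integral, yields
\[
\mathbb E a_n(T)^2\le \Bigl(\frac{(CT)^k}{k!}\Bigr)^2\mathbb E a_{n+k}(T)^2\le \Bigl(\frac{(CT)^k}{k!}\Bigr)^2 q^{-(n+k)}\,S,
\qquad S:=\sum_m q^m\mathbb E a_m(T)^2<\infty.
\]
Since \((CT)^{2k}q^{-k}/(k!)^2\to0\) as \(k\to\infty\), we get \(\mathbb E a_n(T)^2=0\). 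This is exactly where the growth restriction in the solution class is used: factorial decay beats any geometric growth \(q^{-k}\). Hence \(Z^n=0\) for all \(n\) almost surely, which is pathwise uniqueness.

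For strong well-posedness, existence of a weak solution follows from Proposition~\ref{prop:subsequential-limit-equation}, Corollary~\ref{cor:subsequential-skorokhod}(iii) and Proposition~\ref{prop:gaussian-input-convergence}. These identify the initial law as Gaussian with covariance \eqref{eq:limit-initial-covariance} and \(\mathcal M\) as a centered Gaussian martingale with brackets \eqref{eq:limit-martingale-covariance}. One more check is needed: \(Y_0\) must be independent of \(\mathcal M\). This follows since at finite \(N\) the initial data are \(\mathcal F_0\)-measurable and \(\mathcal M\) is a martingale starting at zero with deterministic limiting brackets, so the limiting \(\mathcal M\) is a Gaussian process independent of \(\mathcal F_0\). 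The Yamada--Watanabe principle, in its abstract form for the input \((Y_0,\mathcal M)\) in a Polish space, then gives a strong solution \(Y=F(Y_0,\mathcal M)\) adapted to the filtration generated by the inputs, together with uniqueness in law.
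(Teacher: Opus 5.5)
Your proof is correct, but it closes the homogeneous hierarchy differently from the paper. The paper writes the difference in Duhamel form with the heat semigroup $P_t=e^{t\Delta/2}$. Both the transport term $V_sZ_s^n$ and the coupling term $\bar\rho_s(K*Z_s^{n+1})$ then act as forcing, and the lost derivative is paid for by $\|\nabla\cdot P_\tau f\|_{H^{-\beta_\ast}}\le C\tau^{-1/2}\|f\|_{H^{-\beta_\ast}}$. All levels are combined into $\mathcal Z(t)=(\sum_n q^n\|Z_t^n\|_{H^{-\beta_\ast}}^2)^{1/2}$, where the shift $n\mapsto n+1$ costs only a factor $q^{-1/2}$. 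A singular Volterra--Gronwall lemma, applied pathwise, then gives $\mathcal Z\equiv0$.

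You instead absorb the transport into the evolution family generated by $L_t^*$. Only the pure index shift survives, with a regular kernel, and the iterated bound $a_n(T)\le (CT)^k a_{n+k}(T)/k!$ beats the geometric growth $q^{-k}$. Your route needs the uniform $H^{\beta_\ast}$-stability of the backward flow $\partial_s\varphi+L_s\varphi=0$. This is a standard energy--commutator estimate, since $V_s\in C_b^\infty$ uniformly in $s$, whereas the paper only uses explicit heat-kernel bounds. Justifying the mild form by time-dependent testing is of the same nature in both approaches, and the paper does not spell it out either.

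In return, everything after the mild form is elementary, and it makes visible that each step up the hierarchy costs one time integration. It also gives uniqueness in a larger class. Any growth with $(CT)^{2k}\,\mathbb E\sup_t\|Y_t^{n+k}\|_{H^{-\beta_\ast}}^2/(k!)^2\to0$ suffices, for instance $(n!)^{2-\varepsilon}$. The weighted-$\ell^2$ argument, by contrast, needs the shift to be bounded, i.e.\ essentially geometric weights.

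Two minor remarks. First, the two solutions may come with different $q$'s in \eqref{eq:solution-class-summability}, so take the smaller one. Second, the independence of $Y_0$ and $\mathcal M$ that you verify through the approximation is automatic in the solution class. Indeed, $Y_0$ is $\mathcal F_0$-measurable and each $M^n(\varphi)$ is a continuous $(\mathcal F_t)$-martingale with deterministic brackets, so L\'evy's characterization applies. Your existence and Yamada--Watanabe step is the same as the paper's.
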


\begin{proof}
The pathwise uniqueness argument is standard: because the noise is additive,
subtracting two solutions reduces the problem to uniqueness for a deterministic
homogeneous linear hierarchy with zero initial data. See also
\cite[Lemma~3.11]{wang2023gaussian} for the analogous argument for a linear
fluctuation equation.
Let \((Y^{(1),n})_{n\ge0}\) and \((Y^{(2),n})_{n\ge0}\) be two such solutions
on the same stochastic basis, with the same initial sequence and the same
martingale \(\mathcal M\), and set
\[
Z_t^n:=Y_t^{(1),n}-Y_t^{(2),n}.
\]
Then \(Z_0^n=0\), the common additive stochastic noise cancels, and for every
\(n\ge0\),
\begin{equation*}
\langle Z_t^n,\varphi\rangle
=
\int_0^t
\langle Z_s^n,L_s\varphi\rangle\,\dd s
+
\int_0^t
\left\langle
\bar\rho_s,
(K*Z_s^{n+1})\cdot\nabla\varphi
\right\rangle\,\dd s.
\end{equation*}

Let \(P_t=e^{t\Delta/2}\) be the heat semigroup and set
\(V_s=b(s,\cdot)+K*\bar\rho_s\). Duhamel's formula and \(Z_0^n=0\) give,
for \(t\in[0,T]\),
\[
Z_t^n
=
-\int_0^t
\nabla\cdot P_{t-s}
\left(
V_sZ_s^n+\bar\rho_s(K*Z_s^{n+1})
\right)\,\dd s
\]

Write
\[
\|\cdot\|_\ast:=\|\cdot\|_{H^{-\beta_\ast}}.
\]
Let \(q_1,q_2\in(0,1/4)\) be the parameters in
\eqref{eq:solution-class-summability} for the two solutions, choose
\(q<\min\{q_1,q_2\}\), and set
\[
\mathcal Z(t)
:=
\left(
\sum_{n=0}^\infty q^n\|Z_t^n\|_\ast^2
\right)^{1/2}.
\]
Then \(\sup_{t\le T}\mathcal Z(t)<\infty\) almost surely.
{
The multiplier estimate \eqref{eq:uniform-V-multiplier-bound} gives
\[
\|V_s z\|_\ast
\le C\|z\|_\ast,
\qquad z\in H^{-\beta_\ast}.
\]
To estimate the second term, let
\(\psi=(\psi_1,\ldots,\psi_d)\in
H^{\beta_\ast}(\R^d;\R^d)\). By the definition of the convolution and
Sobolev duality,
\[
\begin{aligned}
\left|
\left\langle \bar\rho_s(K*z),\psi\right\rangle
\right|
&=
\left|
\left\langle
z,\sum_{a=1}^d K_a^\vee*(\psi_a\bar\rho_s)
\right\rangle
\right|\\
&\le
\|z\|_\ast
\left\|
\sum_{a=1}^d K_a^\vee*(\psi_a\bar\rho_s)
\right\|_{H^{\beta_\ast}}\\
&\le
C\|K\|_{H^{\beta_\ast}}\|z\|_\ast\|\psi\|_\infty\\
&\le
C\|K\|_{H^{\beta_\ast}}\|z\|_\ast
\|\psi\|_{H^{\beta_\ast}}.
\end{aligned}
\]
The third line follows from
\eqref{eq:kernel-measure-convolution-bound}, and the last line follows from
the embedding \(H^{\beta_\ast}\hookrightarrow L^\infty\). Taking the
supremum over \(\|\psi\|_{H^{\beta_\ast}}\le1\) gives
\[
\|\bar\rho_s(K*z)\|_\ast
\le C\|K\|_{H^{\beta_\ast}}\|z\|_\ast.
\]
}
The heat-semigroup estimate gives, for vector-valued
\(f\in H^{-\beta_\ast}\) and \(\tau>0\),
\[
\|\nabla\cdot P_\tau f\|_\ast
\le C\tau^{-1/2}\|f\|_\ast.
\]
Taking the weighted \(\ell^2\)-norm in the mild equation and applying
Minkowski's inequality, we obtain
\[
\mathcal Z(t)
\le
C\int_0^t(t-s)^{-1/2}\mathcal Z(s)\,\dd s.
\]
Since \(\mathcal Z\) is bounded on \([0,T]\), the Volterra-type Gronwall
inequality \cite[Lemma~2.2]{zhang2010stochastic}, applied pathwise, yields
\(\mathcal Z\equiv0\).
Thus \(Z^n\equiv0\) for every \(n\ge0\), and pathwise uniqueness holds in the
solution class of
Definition~\ref{def:log-hierarchy-solution}.

Propositions~\ref{prop:weighted-tightness},
\ref{prop:gaussian-input-convergence}, and
\ref{prop:subsequential-limit-equation}, together with
Corollary~\ref{cor:subsequential-skorokhod}\textup{(iii)}, give weak existence.
Kurtz's Yamada--Watanabe theorem
\cite[Theorem~1.5]{kurtz2014yamada} then yields a probabilistically strong
solution and joint uniqueness in law. Hence the limiting logarithmic hierarchy
is strongly well posed.
\end{proof}

\begin{proof}[Proof of Theorem~\ref{thm:log-hierarchy}]
Combining Proposition~\ref{prop:weighted-tightness},
Corollary~\ref{cor:subsequential-skorokhod}\textup{(iii)}, and
Propositions~\ref{prop:gaussian-input-convergence} and
\ref{prop:subsequential-limit-equation}, every subsequence of
\((Y^{N,n})_{n\ge0}\) has a further subsequence converging in distribution in
\(E^{\mathbb N_0}\) to a probabilistically weak solution of the limiting
hierarchy with the prescribed initial law. By
Proposition~\ref{prop:weighted-uniqueness}, this limiting law is unique.
Consequently, the whole sequence converges in distribution.
\end{proof}

\section*{Acknowledgements}
This work was supported by the National Key R\&D Program of China
(2024YFA1015500) and the NSFC (12595282 and 12171009).

\bibliographystyle{plain}
\bibliography{CLT_sequential}

\bigskip
\begin{flushleft}
\small
\textsc{Zhenfu Wang}\\
Beijing International Center for Mathematical Research, Peking University,\\
5 Yiheyuan Road, Beijing 100871, China\\
\textit{Email address:}
\href{mailto:zwang@bicmr.pku.edu.cn}{\texttt{zwang@bicmr.pku.edu.cn}}

\medskip
\textsc{Xianliang Zhao}\\
Beijing International Center for Mathematical Research, Peking University,\\
5 Yiheyuan Road, Beijing 100871, China\\
\textit{Email address:}
\href{mailto:xzhaomath@gmail.com}{\texttt{xzhaomath@gmail.com}}
\end{flushleft}

\end{document}